\documentclass[11pt,a4paper]{amsart}
\usepackage{amsfonts}
\usepackage{amsthm}
\usepackage{amsmath}
\usepackage{amscd}
\usepackage[latin2]{inputenc}
\usepackage{t1enc}
\usepackage[mathscr]{eucal}
\usepackage{indentfirst}
\usepackage{graphicx}
\usepackage{graphics}
\usepackage{pict2e}
\usepackage{epic}
\usepackage{amssymb}
\usepackage[colorlinks,citecolor=red,urlcolor=blue,bookmarks=false,hypertexnames=true]{hyperref} 
\numberwithin{equation}{section}
\usepackage[margin=3cm]{geometry}

\theoremstyle{plain}
\newtheorem{Th}{Theorem}[section]
\newtheorem{Lemma}[Th]{Lemma}
\newtheorem{Cor}[Th]{Corollary}
\newtheorem{Prop}[Th]{Proposition}
\newtheorem*{Theorem-non}{Theorem}
\newtheorem*{Theorem-non2}{Theorem}

 \theoremstyle{definition}
 \newtheorem*{Proof-non}{Proof of Theorem \ref{Theorem 1.1} assuming Proposition \ref{Prop1} and Proposition \ref{Propm}}
\newtheorem*{Proof-non2}{Proof of (1)  ($\bf{m_{1}}$-estimate) in Proposition \ref{Propm} assuming Proposition \ref{Proposition 5.1}}
\newtheorem*{Proof-non3}{Proof of (2)  ($\bf{m_{2}}$-estimate) in Proposition \ref{Propm}}
\newtheorem*{Proof-non4}{Proof of Proposition \ref{Prop1}}
\newtheorem{Def}[Th]{Definition}

\newtheorem{Rem}[Th]{Remark}
\newtheorem{?}[Th]{Problem}

\newcommand{\RomanNumeralCaps}[1]
    {\MakeUppercase{\romannumeral #1}}
\begin{document}

\email{Jiseongk@buffalo.edu}
\address{University at Buffalo, Department of Mathematics
244 Mathematics Building
Buffalo, NY 14260-2900}
\title{On the asymptotics of the shifted sums of hecke eigenvalue squares}
\author{jiseong kim}
\begin{abstract} 
The purpose of this paper is to obtain asymptotics of shifted sums of Hecke eigenvalue squares on average. We show that for $X^{\frac{2}{3}+\epsilon} < H <X^{1-\epsilon},$ there are constants $B_{h}$ such that 
$$ \sum_{X\leq n \leq 2X} \lambda_{f}(n)^{2}\lambda_{f}(n+h)^{2}-B_{h}X=O_{f,A,\epsilon}\big(X (\log X)^{-A}\big)$$ for all but $O_{f,A,\epsilon}\big(H(\log X)^{-3A}\big)$ integers $h \in [1,H]$  where $\{\lambda_{f}(n)\}_{n\geq1}$ are normalized Hecke eigenvalues of a fixed holomorphic cusp form $f.$ 
%With the generalized Lindel\"of hypothesis, we also proved that $G_{h}(X) =o(X)$ for almost all $h \in [1,H],$
%when $H$ is sufficiently large(near to $X.$)
Our method is based on the Hardy-Littlewood circle method. We divide the minor arcs into two parts $m_{1}$ and $m_{2}.$ In order to treat $m_{2},$ we use the Hecke relations, a bound of Miller to apply some arguments from a paper of Matom\"a{}ki, Radziwi\l{}\l{} and Tao. We apply Parseval's identity and Gallagher's lemma so as to treat $m_{1}.$

\end{abstract}

\maketitle
\tableofcontents 
\section{\rm{Introduction}}
\noindent Let $f(z)$ be a holomorphic Hecke cusp form of an even integral weight $\kappa$
 for the full modular group $SL(2,\mathbb{Z}).$ It is known that every holomorphic Hecke cusp form has a Fourier expansion at the cusp $\infty.$ Therefore, $f(z)$ has a Fourier expansion
\begin{equation}
    f(z)=\sum_{n=1}^{\infty} c_{n}n^{\frac{\kappa-1}{2}}e(nz)
\end{equation}
for some $\{c_{n}\}_{n \geq 1} \subset \mathbb{R}$ in which $e(z)=e^{2\pi iz}$. For each $n \geq 1,$
$$T_{n}f(z)=\frac{1}{n} \sum_{ad=n} a^{\kappa} \sum_{0\leq b <d} f(\frac{az+b}{d})=\lambda_{f}(n) f(z)$$ where $\{T_{n}\}_{n\geq 1}$ are the $n$-th Hecke operators, $\{\lambda_{f}(n)\}_{n\geq 1} \subset \mathbb{R}$ are the normalized Hecke eigenvalues of $f.$ The Hecke eigenvalues $\{\lambda_{f}(n)\}_{n \geq 1}$ satisfy the following properties:

\begin{equation}
c_{1}\lambda_{f}(n)=c_{n},
\end{equation}
\begin{equation}
    \lambda_{f}(m) \lambda_{f}(n)= \sum_{d|(n,m)} \lambda_{f}(\frac{nm}{d^{2}}),
\end{equation}

\begin{equation} \label{divisor bound}
     |\lambda_{f}(n)| \leq d_{2}(n),
\end{equation}
  in which $d_{2}(n):= \sum_{m|n} 1,$ and the inequality \eqref{divisor bound} is called the Deligne bound.
We refer the reader to \cite[Chapter 14]{IK1} for more details.
There are some results about asymptotic evaluations of sums related to Hecke eigenvalues. Rankin \cite{RA1} and Selberg \cite{S} showed that 
\begin{equation}\label{Rankin-Selberg}
    \sum_{1 \leq n \leq X}\lambda_{f}(n)^{2}=c_{1,f}X+O_{f}(X^{\frac{3}{5}})
\end{equation} for some constant $c_{1,f}.$  Recently, Huang \cite{huang2021rankinselberg} improved the error term $O_{f}(X^{\frac{3}{5}})$ to $O_{f}(X^{\frac{3}{5}-\frac{1}{560}+o(1)}).$
 Fomenko \cite{FM} showed that
\begin{equation}\label{Fomenko}
\sum_{1\leq n\leq X} \lambda_{f}(n)^{4} =c_{2,f}X\log X + d_{f}X+O_{f,\epsilon}(X^{\frac{9}{10}+\epsilon})
\end{equation} for some constants $c_{2,f}, d_{f}.$
For results of higher powers, we refer the reader to \cite{L1}. Since the divisor function appears in a few identities, such as relations between the Riemann zeta function and the Eisenstein series, the additive divisor problems can be considered as prototypes of the shifted convolution problems for coefficients of automorphic forms. 
\noindent Let $d_{k}(n):= \sum_{n_{1},n_{2},...n_{k} \in \mathbb{N} \atop n_{1}n_{2}...n_{k}=n} 1.$
Deshouillers, Iwaniec \cite{DeshIwan} proved that 
$$\sum_{X \leq n \leq 2X} d_{2}(n)d_{2}(n+h)=P_{2}(\log X)X +O(X^{\frac{2}{3}+o(1)})$$
as $X \rightarrow \infty$ where $P_{2}(x)$ is a quadratic polynomial with coefficients depending on $h.$  Baier, Browning, Marasingha and Zhao \cite{BBMZ} proved the following theorem.
\begin{Theorem-non2} Let $\epsilon$ be fixed small positive constant. Assume that $X^{\frac{1}{3}+\epsilon} \leq H \leq X^{1-\epsilon}.$ Then there exists $\delta>0$ such that for all but $O_{\epsilon}(HX^{-\delta})$ integers $|h|<H,$ $$\sum_{X \leq n \leq 2X} d_{3}(n)d_{3}(n+h)=P_{3}(\log X)X + O(X^{1-\delta})$$
where $P_{3}(x)$ is a quartic polynomial with coefficients depending on $h.$ 
\end{Theorem-non2} \noindent For the higher order divisor functions, Matom\"a{}ki, Radziwi\l{}\l{} and Tao proved the following averaged divisor correlation conjecture \cite{MRT1}.
\begin{Theorem-non}
\cite[Theorem 1.3, (ii)]{MRT1}
Let $A>0,$ and let $0<\epsilon< \frac{1}{2}.$ Let $k,l \geq 2$ be fixed. Suppose that $X^{\frac{8}{33}+\epsilon} \leq H \leq X^{1-\epsilon}$ for some $X\geq 2.$
Let $0\leq h_{0} \leq X ^{1-\epsilon}$.
Then for each $h,$ there exists a polynomial $P_{k,l,h}$ of degree $k+l-2$ such that
$$\sum_{X \leq n \leq 2X} d_{k}(n)d_{l}(n+h) = P_{k,l,h}(\log X)X + O_{A,\epsilon, k,l}\big(X(\log X)^{-A}\big) \; \textrm{as} \; X \rightarrow \infty$$
for all but $O_{A,\epsilon,k,l}\big(H (\log X)^{-A}\big)$ values of $h$ with $|h-h_{0}| \leq H.$ 
\end{Theorem-non} \noindent In \cite{MRT2}, it was shown that for $k\geq l,$ $(\log X)^{10000k\log k} \leq H \leq X^{1-\epsilon},$ there are constants $C_{k,l,h}>0$ such that  
\begin{equation}
\sum_{1\leq |h| \leq H} \Big| \sum_{X \leq n \leq 2X} d_{k}(n)d_{l}(n+h)-C_{k,l,h}X (\log X)^{k+l-2} \Big| = o\big(HX (\log X)^{k+l-2}\big).
\end{equation}
\noindent In this paper, as an analog of the divisor correlation problems, we study asymptotic estimations of the shifted convolution sums of the Hecke eigenvalue squares.

\subsection{Main results}

\begin{Th}\label{Theorem 1.1}Let $\epsilon$ be a fixed small positive constant. Let $A \in \mathbb{N}.$ Suppose $X^{\frac{2}{3}+\epsilon} \leq H\leq X^{1-\epsilon}.$ There are constants $B_{h}$ such that as $X \rightarrow \infty,$
\begin{equation}\label{1theorem1} \sum_{X \leq n\leq 2X} \lambda_{f}(n)^{2}\lambda_{f}(n+h)^{2}-B_{h}X=O_{f,A,\epsilon}\big(X (\log X)^{-A}\big) \end{equation}
for all but $O_{f,A,\epsilon}\big(H(\log X)^{-3A}\big)$ integers $h \in [1,H].$ 

\end{Th}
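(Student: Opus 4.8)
The plan is to evaluate the shifted sum by the Hardy--Littlewood circle method applied to the exponential sum
\[
\mathcal{S}(\alpha):=\sum_{X\le n\le 2X}\lambda_{f}(n)^{2}e(n\alpha),
\]
for which orthogonality gives
\[
\sum_{X\le n\le 2X-h}\lambda_{f}(n)^{2}\lambda_{f}(n+h)^{2}=\int_{0}^{1}|\mathcal{S}(\alpha)|^{2}e(h\alpha)\,d\alpha,
\]
the discrepancy between the range $n\le 2X-h$ and $n\le 2X$ being absorbed into the error since $h\le H\le X^{1-\epsilon}$. First I would fix a Farey dissection at a level depending on $(\log X)^{B}$, declaring $\alpha$ to lie in the major arcs $\mathfrak{M}$ when it is within $(\log X)^{B}/X$ of some $a/q$ with $(a,q)=1$ and $q\le(\log X)^{B}$, and setting $\mathfrak{m}:=[0,1]\setminus\mathfrak{M}$. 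Over $\mathfrak{M}$ the integral should yield the main term $B_{h}X$ together with an admissible error; the constant $B_{h}$ will appear as a singular-series-type product of local densities of $\lambda_{f}(\cdot)^{2}$ in progressions, multiplied by the Rankin--Selberg constant of \eqref{Rankin-Selberg}. Since $\sum_{n}\lambda_{f}(n)^{2}n^{-s}$ has only a simple pole at $s=1$, one expects no logarithmic factors, consistent with the linear shape $B_{h}X$ in \eqref{1theorem1}. This is the content of Proposition~\ref{Prop1}.

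It remains to show that the minor-arc integral is negligible for all but few $h$. Viewing $|\mathcal{S}(\alpha)|^{2}\mathbf{1}_{\mathfrak{m}}(\alpha)$ as a function on $\mathbb{R}/\mathbb{Z}$, the integral $\int_{\mathfrak{m}}|\mathcal{S}(\alpha)|^{2}e(h\alpha)\,d\alpha$ is precisely its $h$-th Fourier coefficient, so Parseval's identity bounds $\sum_{h}\big|\int_{\mathfrak{m}}|\mathcal{S}|^{2}e(h\alpha)\,d\alpha\big|^{2}$ by the minor-arc fourth moment $\int_{\mathfrak{m}}|\mathcal{S}(\alpha)|^{4}\,d\alpha$, and Markov's inequality then converts any sufficiently strong upper bound for this moment into the desired exceptional-set estimate of size $O(H(\log X)^{-3A})$. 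Following the indicated strategy I would split $\mathfrak{m}=m_{1}\cup m_{2}$ according to the size of the denominator $q$ and bound the two ranges by different means; this is Proposition~\ref{Propm}.

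On $m_{1}$ I would combine Parseval with Gallagher's lemma, which bounds the relevant $L^{2}$ (resp.\ fourth-moment) mass over $m_{1}$ by a mean square of the short-interval sums $\sum_{x<n\le x+H'}\big(\lambda_{f}(n)^{2}-c_{1,f}\big)$ with $H'$ of size the reciprocal arc-width. In the admissible range of $H$ these short sums exhibit the expected equidistribution, and this short-interval input is exactly what I would isolate as Proposition~\ref{Proposition 5.1}; it yields the $m_{1}$-estimate of Proposition~\ref{Propm} uniformly in $h$.

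The arithmetically hard range is $m_{2}$, and here I would use the Hecke relations to expose cancellation. Writing $\lambda_{f}(n)^{2}=\sum_{d\mid n}\lambda_{f}(n^{2}/d^{2})$ recasts $\mathcal{S}(\alpha)$ as a divisor-type bilinear sum in the eigenvalues $\lambda_{f}$ themselves, after which the inner linearly twisted sums $\sum_{n\le N}\lambda_{f}(n)e(na/q)$ can be controlled by Miller's bound; this is precisely the mechanism by which Matom\"a{}ki--Radziwi\l{}\l{}--Tao treat the $d_{k}$-correlations, and I would import their arguments. I expect $m_{2}$ to be the principal obstacle: unlike the pure divisor function, $\lambda_{f}(n)^{2}$ carries the extra weight of the symmetric-square $L$-function and the savings in Miller's bound are comparatively modest, so balancing the divisor variable $d$, the modulus $q$ and the length $H$ against the trivial bound while respecting the constraint $H\ge X^{2/3+\epsilon}$ is delicate and dictates the shape of the admissible range. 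Finally I would collect the $m_{1}$ and $m_{2}$ estimates, feed the combined minor-arc fourth-moment bound into Markov's inequality, and add the major-arc evaluation of Proposition~\ref{Prop1} to obtain \eqref{1theorem1} for all but $O(H(\log X)^{-3A})$ integers $h\in[1,H]$.
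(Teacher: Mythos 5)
Your skeleton---circle method, major arcs yielding $B_{h}X$, a two-piece minor-arc decomposition treated by Gallagher's lemma on one piece and by the Hecke relations, Miller's bound and the Matom\"a{}ki--Radziwi\l{}\l{}--Tao machinery on the other---matches the paper. The genuine gap is in the step that manufactures the exceptional set. You bound $\sum_{1\le h\le H}\bigl|\int_{\mathfrak m}|S_{f}(\alpha)|^{2}e(h\alpha)\,d\alpha\bigr|^{2}$ by the \emph{global} minor-arc fourth moment $\int_{\mathfrak m}|S_{f}(\alpha)|^{4}\,d\alpha$ via Parseval. That inequality is true, but the right-hand side is too large for Markov's inequality to give the theorem: you would need $\int_{\mathfrak m}|S_{f}|^{4}\ll HX^{2}(\log X)^{-5A}$, and at the endpoint $H=X^{\frac{2}{3}+\epsilon}$ this requires improving the trivial bound $\int_{\mathfrak m}|S_{f}|^{4}\le\sup_{\mathfrak m}|S_{f}|^{2}\int_{0}^{1}|S_{f}|^{2}\ll X^{3}\log X$ (Rankin--Selberg and \eqref{Fomenko}) by a full power $X^{\frac{1}{3}}$, i.e.\ in effect a pointwise bound $\sup_{\mathfrak m}|S_{f}(\alpha)|\ll X^{\frac{5}{6}+o(1)}$ on the minor arcs. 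No such bound is available: Miller's bound gives pointwise control only of the large-divisor part $\sum_{d\ge Y}\lambda_{f}(d^{2})\sum_{k}e(dk\alpha)$ of the Hecke decomposition (this is \eqref{4pointwise}), while the complementary part $S_{b}$ is controlled only in windowed $L^{2}$. This is exactly why the paper does not use plain Parseval: in the proof of Theorem \ref{Theorem 2.3} it majorizes $1_{[1,H]}(h)$ by a Schwartz function $\Phi(h/H)$ whose Fourier transform is supported in $[-\frac{1}{2},\frac{1}{2}]$ and applies Poisson summation, so that the $h$-sum enforces the localization $|\alpha-\beta|\le\frac{1}{2H}$, and the quantity to be bounded becomes $H\int_{m}|S_{f}(\alpha)|^{2}\bigl(\int_{m\cap[\alpha-\frac{1}{2H},\alpha+\frac{1}{2H}]}|S_{f}(\beta)|^{2}d\beta\bigr)d\alpha$. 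Only then do the \emph{windowed} $L^{2}$ estimates of Proposition \ref{Propm}---which is all that the paper's minor-arc methods provide---suffice, in combination with $\int_{0}^{1}|S_{f}|^{2}\ll X\log X$. Without this band-limited smoothing your reduction cannot invoke Proposition \ref{Propm} at all.

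Two smaller discrepancies would also need repair before your outline runs. First, your major arcs have width $(\log X)^{B}/X$, while the paper's have width $X^{-\frac{5}{6}-2\epsilon}$; this is not cosmetic, because the $m_{1}$ treatment (the kernel comparison behind Proposition \ref{Proposition 5.1}) works precisely on $[X^{-\frac{5}{6}-2\epsilon},\frac{1}{2}X^{-\frac{1}{2}-\epsilon}]$ and the $m_{2}$ machinery needs $|\alpha-\frac{a}{q}|\ge\frac{1}{2}X^{-\frac{1}{2}-\epsilon}$, so the region $(\log X)^{B}/X\le|\alpha-\frac{a}{q}|\le X^{-\frac{5}{6}-2\epsilon}$---where $|S_{f}|$ is still of major-arc size and genuinely contributes to the main term---would be covered by neither of your minor-arc arguments; the paper absorbs it into the major arcs, whose width is dictated by the $X^{\frac{3}{4}+\epsilon}$ error terms of Lemmas \ref{Lemma 3.4} and \ref{Lemma 3.7}. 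Second, the split $m=m_{1}\cup m_{2}$ is not ``according to the size of the denominator $q$'': both pieces run over all $q\le Q$, and the split is by the size of $|\alpha-\frac{a}{q}|$, which is what determines which of the two toolkits applies.
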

\noindent Precisely, we get 
\begin{equation}
B_{h} = \sum_{q=1}^{\infty} \thinspace  \sum_{1\leq a \leq q \atop(a,q)=1} \big(\sum_{q=q_{0}q_{1}}\frac{\mu(q_{1})}{\phi(q_{1})q_{0}}w_{f,q_{0},q_{1}}\big)^{2}e(h\frac{a}{q}), 
\end{equation}
where $$w_{f,q_{0},q_{1}} = c_{1,f}\prod_{p|q} (\frac{p-1}{p+1})\big(1-\frac{\lambda_{f}(p)^{2}-2}{p}+\frac{1}{p^{2}}\big) \prod_{p^{l} \parallel q_{0}}\big(\lambda_{f}(p^{l})^{2}+\lambda_{f}(p^{l+1})^{2}p^{-1}+...\big)$$  here $\phi$ is the Euler totient function, $\mu$ is the M\"{o}bius function, and $p^{l} \parallel q_{0}$ means $p^{l} | q_{0},$ but $p^{l+1} \nmid q_{0}$ (see Remark \ref{Remark 3.3}, Remark \ref{Remark 3.5}). By \eqref{3dq}, one can show that $$B_{h}\ll_{f} 1+ \sum_{a|h \atop a \neq 1} \frac{d_{2}(a)^{2}(\log a)^{14}}{a}\ll_{f}d_{2}(h).$$

\begin{Rem}\label{Remark 1.2}
It is known that for any $1<X<Y,$ summing $\frac{\lambda_{f}(p)^{2}}{p}$ over primes
\begin{equation}\label{squareprime} \sum_{ X \leq p \leq Y} \frac{\lambda_{f}(p)^{2}}{p}= \sum_{X \leq p \leq Y} \frac{1}{p} +O_{f}(1)\end{equation}(see \cite{MR} Lemma 3.1,(iii)).
Since the improved error term for \eqref{Rankin-Selberg} is $O(X^{\frac{3}{5}-\frac{1}{560}+o(1)}),$ for any $y \in \mathbb{N}$ and all $x \in [X,2X],$ $$\sum_{x \leq n \leq x+y} \lambda_{f}(n)^{2}=c_{1,f}y+O_{f}(X^{\frac{3}{5}-\frac{1}{560}+o(1)}).$$
Interchanging the order of summations, we see that for $X^{\frac{3}{5}-\frac{1}{560}+o(1)} \leq H \leq X^{1-\epsilon},$

\begin{equation}\begin{split}\sum_{1 \leq h \leq H} \sum_{X \leq n\leq 2X} \lambda_{f}(n)^{2}\lambda_{f}(n+h)^{2} &= \sum_{X\leq n \leq 2X} \lambda_{f}(n)^{2} \sum_{1\leq h \leq H} \lambda_{f}(n+h)^{2} \\&=  c_{1,f}^{2}HX+O_{f}(X^{\frac{8}{5}-\frac{1}{560}+o(1)})\end{split} \nonumber \end{equation} where $c_{1,f}$ is the coefficient in \eqref{Rankin-Selberg}.

\end{Rem}

\subsection{Sketch of proof}
By the Hardy-Littlewood circle method, each shifted sum can be expressed as an integral over the unit interval. Let $Q=(\log X)^{B}$ for some $B>0.$ By the Dirichlet approximation theorem, it is known that for any $\alpha \in [0,1],$ there is an interval  $\big(\frac{a}{q}-\frac{1}{M_{1}(q,Q)},\frac{a}{q}+\frac{1}{M_{2}(q,Q)}\big)$ containing $\alpha$ such that $1 \leq a \leq q\leq Q , \;(a,q)=1,$ and $qQ \leq \min \big(M_{1}(q,Q),M_{2}(q,Q)\big).$ In Section 2 we divide the interval [0,1] into major arcs and minor arcs. Each major arc is the set of $\alpha$ such that  $$|\alpha- \frac{a}{q}| \leq X^{-\frac{5}{6}-2\epsilon}$$
for some $q\leq (\log X)^{B}, \;(a,q)=1.$       
In Section 3 we treat the major arcs. From the major arcs, we obtain the main term $B_{h}X$ in our asymptotic formula.  
In Section 4 and Section 5 we treat the minor arcs. %For the minor arc estimates, we decompose $\lambda_{f}(n)^{2}$ by the Hecke relations. By applying some upper bounds of summations of $\lambda_{f}(n^{2})$(a bound of Miller), we apply some parts of  methods in \cite{MRT1}. 
In \cite{MRT1} authors separated the minor arcs into two parts
 \begin{equation} \begin{split}
&m_{1}:= \bigcup_{1 \leq q \leq Q} \bigcup_{1 \leq a \leq q \atop (a,q)=1} [\frac{a}{q}-X^{-\frac{1}{6}-\epsilon}, \frac{a}{q}-\frac{(\log X)^{B'}}{X}] \cup [\frac{a}{q}+\frac{(\log X)^{B'}}{X}, \frac{a}{q}+X^{-\frac{1}{6}-\epsilon}],\\&
m_{2}:= \bigcup_{1 \leq q \leq Q} \bigcup_{1\leq a \leq q \atop (a,q)=1} [\frac{a}{q}-\frac{1}{M_{1}(q,Q)}, \frac{a}{q}-X^{-\frac{1}{6}-\epsilon}] \cup [\frac{a}{q}+X^{-\frac{1}{6}-\epsilon}, \frac{a}{q}+\frac{1}{M_{2}(q,Q)}]
\end{split} \nonumber
\end{equation}
for some $B'$ depending on $B.$
The authors treated $m_{1}$ using the Huxley large value estimates and the Dirichlet mean value theorems. For $m_{2},$ the authors applied some techniques of decomposing the divisor functions as Dirichlet convolutions and further dyadic decomposition. 
After decomposing the divisor functions to some types of sums,  the authors applied the following theorem.
\begin{Theorem-non}\cite[Proposition 6.1]{MRT1}
Let  $\epsilon>0$  be sufficiently small. 
Let  $A>0$  be fixed, and let  $B>0$  be sufficiently large depending on  $A$. 
Let  $X\geq 2$ , and set  $H:= X^{\frac{8}{33} +\epsilon}$. 
Set  $Q=(\log X)^{B}$,  $\rho=Q^{-\frac{1}{2}}$, and let  $1\leq q_{1}\leq Q.$ 
Let  $\gamma$  be a quantity such that  $X^{-\frac{1}{6}-2\epsilon}\leq \gamma \leq \frac{1}{q_{1}Q}$.
Let  $f:\mathbb{N}\rightarrow \mathbb{C}$  be a function of Type $\rm{d_{1}}$ or Type $\rm{\RomanNumeralCaps{2}}$ sum.
Then 
\begin{equation} \int_{\rho \gamma X \ll t \ll \rho^{-1} \gamma X} \Big(\sum_{\chi (\rm{mod} \thinspace q_{1})} \int_{t-\gamma H}^{t+\gamma H} \big| D[f](\frac{1}{2}+it',\chi)\big|dt'\Big)^{2}dt \end{equation} $$\ll_{\epsilon,A,B}q_{1}|\gamma|^{2}H^{2}X(\log X)^{-A},$$
\end{Theorem-non}where $$D[f](s,\chi):= \sum_{n=1}^{\infty} \frac{f(n)\chi(n)}{n^{s}} .$$
In this paper, we also separate the minor arcs into two parts, but $m_{1},m_{2}$ are quite different compared to the above separation. In Section 4 we basically follow the methods in \cite{MRT1} for  $m_{2},$ but we use the Hecke relations to decompose $\lambda_{f}(n)^{2}$ as $\lambda_{f}(m^{2}) \ast 1.$ For the divisor correlation problems, $d_{k}(n)=1 \ast 1 \ast ... \ast 1.$ It is easy to check that $$\sum_{n\leq x \atop n \equiv a \: (\mathrm{mod} \:\it{q})} \frac{1}{n^{\frac{1}{2}+it}} \ll_{\mathcal{A},\mathcal{B},\mathcal{B}'} x^{\frac{1}{2}}(\log X)^{-\mathcal{A}}$$
for any $\mathcal{A},\mathcal{B},\mathcal{B}'>0,\: 1\leq a \leq  q \leq (\log X)^{\mathcal{B}} \; \textrm{and}\; t \in [(\log x)^{\mathcal{B}'}, x^{\mathcal{B}'}]$ where $\mathcal{B}'$ is sufficiently large depending on $\mathcal{A},\mathcal{B}$ (we say that $1$ has good cancellation. We refer the reader to \cite[Subsection 2.4]{MRT1}). The above inequality contributes to the $(\log X)^{-A}$ cancellation in (1.11). Also, the supports of the characteristic functions 1 in the Dirichlet convolution can be interchanged, and it is beneficial for their $m_{2}$ estimates (there are some conditions on the support of each factor. For details, see \cite[Lemma A.1]{MRT1}).  In our case we have $\lambda_{f}(m^{2})$ and $1$ in the Dirichlet convolution, so we can not interchange their supports. Instead, we treat a perturbation $b(m)$ of $\lambda_{f}(m)^{2}$ such that 
$$b(m):= \sum_{d|m \atop 1\leq d < HX^{-\epsilon^{2}} }\lambda_{f}(d^{2}).$$
To bound a difference between $b_{m}$ and $\lambda_{f}(m)^{2},$ we apply the following Lemma.  

\begin{Lemma}\label{Lemma 1.3}$($\cite[Theorem 1.1]{Miller1}$,\rm{\;A\; bound\; of\; Miller})$
For any $\epsilon>0$ and any $\alpha \in \mathbb{R},$
\begin{equation}
    \sum_{1\leq n\leq X} \lambda_{f}(n^{2})e(\alpha n) \ll_{f,\epsilon} X^{\frac{3}{4}+\epsilon}.
\end{equation}

\end{Lemma}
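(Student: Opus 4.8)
The plan is to reduce the sum to an additively twisted linear sum of genuine $GL(3)$ Fourier coefficients and then apply $GL(3)$ Voronoi summation. By Hecke multiplicativity one has $\sum_{n\geq1}\lambda_f(n^2)n^{-s}=L(s,\mathrm{sym}^2 f)/\zeta(2s)$, and by Gelbart--Jacquet the symmetric square $\mathrm{sym}^2 f$ is a self-dual cuspidal automorphic representation of $GL(3)/\mathbb{Q}$ (the form $f$ has level one, hence is non-dihedral). Writing $A(m)$ for the Dirichlet coefficients of $L(s,\mathrm{sym}^2 f)$ and using $\zeta(2s)^{-1}=\sum_d\mu(d)d^{-2s}$ gives $\lambda_f(n^2)=\sum_{d^2\mid n}\mu(d)A(n/d^2)$, so that $\sum_{n\leq X}\lambda_f(n^2)e(\alpha n)=\sum_{d\leq\sqrt X}\mu(d)\sum_{m\leq X/d^2}A(m)e(\alpha d^2 m)$. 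A bound $\sum_{m\leq Y}A(m)e(\theta m)\ll_\epsilon Y^{3/4+\epsilon}$, uniform in $\theta$, then yields the Lemma after summing the convergent series $\sum_d d^{-3/2}$. This inner bound is Miller's Theorem 1.1, which we quote; the mechanism is as follows.

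First I would smooth the sharp cut-off, replacing $\mathbf{1}_{[1,Y]}$ by a fixed smooth weight $\psi$ equal to $1$ on $[1,Y]$ and supported on $[\tfrac12,2Y]$; the trivial bound $|A(m)|\ll_\epsilon m^\epsilon$ (Deligne) absorbs the boundary error into $O(Y^{\epsilon})$. Next, by Dirichlet's theorem write $\theta=a/q+\beta$ with $(a,q)=1$, $q\leq Q_0$ and $|\beta|\leq(qQ_0)^{-1}$, for a parameter $Q_0$ to be optimised.

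Applying the $GL(3)$ Voronoi summation formula to $\sum_m A(m)e(ma/q)\psi(m)e(\beta m)$ converts it into a dual sum of the shape $\sum_n A(n)\,\mathrm{Kl}(\,\cdot\,;q)\,\widetilde\psi(n)$, where $\mathrm{Kl}$ is a hyper-Kloosterman sum modulo $q$ (bounded by Weil--Deligne) and $\widetilde\psi$ is a Hankel-type transform of $\psi(\cdot)e(\beta\,\cdot)$ against the $GL(3)$ Bessel kernel. The effective length of the dual sum is $\asymp q^3/Y^2$, reflecting the $GL(3)$ conductor drop. Inserting the stationary-phase asymptotics of the Bessel kernel, bounding the dual sum via Cauchy--Schwarz together with the Rankin--Selberg mean square $\sum_{n\leq Z}|A(n)|^2\ll Z^{1+\epsilon}$, and balancing the contribution of small $q$ against large $q$ produces the exponent $3/4+\epsilon$.

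The main obstacle is uniformity in $\theta$: one must treat every rational approximation $a/q$ simultaneously and control $\widetilde\psi$ uniformly in both $q$ and the perturbation $\beta$, since the oscillation $e(\beta m)$ displaces the stationary point of the Bessel transform and competes with the arithmetic oscillation coming from $e(ma/q)$. Establishing this uniformly, together with the precise decay of the $GL(3)$ Hankel transform, is the technical heart of the argument and is exactly what Miller's Theorem 1.1 supplies; we therefore take the bound as given.
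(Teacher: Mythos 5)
Your proposal is correct and takes essentially the same route as the paper: the paper gives no argument for this lemma at all, quoting it directly as Miller's Theorem 1.1, which is exactly the bound you invoke (and whose proof you rightly do not reproduce). If anything your version is slightly more complete, since Miller's theorem is stated for the $L$-function coefficients $A(m)$ of a genuine $GL(3)$ cusp form --- here the Gelbart--Jacquet lift $\mathrm{sym}^{2}f$, cuspidal because a level-one form is non-dihedral --- and the M\"obius inversion through $1/\zeta(2s)$ converting the uniform bound for $\sum_{m\leq Y}A(m)e(\theta m)$ into one for $\sum_{n\leq X}\lambda_{f}(n^{2})e(\alpha n)$, which you carry out explicitly and correctly, is left implicit in the paper's citation.
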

\begin{Rem}\label{Remark 1.4} %Due to the application on Lemma 1.3 on \eqref{4pointwise}, we need  $X^{\frac{2}{3}+\epsilon} \leq H.$
The condition $X^{\frac{2}{3}+\epsilon} \leq H$ in Theorem \ref{Theorem 1.1} arises from an application of Lemma \ref{Lemma 1.3} on \eqref{4pointwise}. Precisely, by taking $Y=HX^{-\epsilon^{2}},$ $H$ should satisfy the following inequality
\begin{equation}\label{1errorterm1}X^{2}(HX^{-\epsilon^{2}})^{-\frac{1}{2}+2\epsilon}H^{-1} =X^{2+\frac{\epsilon^{2}}{2}-2\epsilon^{3}}H^{-\frac{3}{2}+2\epsilon}=O_{f,\epsilon}(X^{1-\epsilon}).\end{equation} If one assumes a square root cancellation such that 
$$\sum_{1\leq n\leq X} \lambda_{f}(n^{2})e(\alpha n) \ll_{f,\epsilon} X^{\frac{1}{2}+\epsilon},$$ then by the similar methods of Lemma \ref{Lemma 4.1}, one can get 
$$X^{2}(HX^{-\epsilon^{2}})^{-1+2\epsilon}H^{-1} = O_{f,\epsilon}(X^{1-\epsilon})$$ instead of \eqref{1errorterm1}, and the condition on $H$ can be improved to $X^{\frac{1}{2}+\epsilon} \leq H.$
\end{Rem}
\noindent 
For the range $X^{\frac{5}{6}+\epsilon} \leq H \leq X^{1-\epsilon},$ we only need the methods used to estimate the contribution of $m_{2}.$ In Section 5 we develop a new method which originates from Gallagher's Lemma \eqref{Gallagher}, and we apply it to treat $m_{1}$. For applications, this method requires some mean square upper bounds of corresponding Dirichlet series on the line $\Re(s)=\frac{1}{2}.$ Therefore, we expect that this method can be applied to more general multiplicative functions.
\begin{Rem}\label{Remark 1.5} The length of the major arc intervals arises from the application of Lemma \ref{Lemma 3.4}, Lemma \ref{Lemma 3.7} on Proposition \ref{Prop1}. Roughly speaking, error terms of the sums of the Hecke eigenvalue squares twisted by various non-principal characters modulo $q$ are bounded by $O_{f,q,\epsilon}(X^{\frac{3}{4}+\epsilon}),$ and these bounds are related to error terms of the shifted sums. To make these error terms negligible, the length of each interval should be bounded by $X^{-\frac{5}{6}-2\epsilon}.$ Since we have better upper bounds for Lemma \ref{Lemma 3.4} and Lemma \ref{Lemma 3.7} (see Remark \ref{Remark 3.6}), one may treat wider ranges of intervals for the major arcs. Since the length $X^{-\frac{5}{6}-2\epsilon}$ is compatible with methods in Section 5, we apply the upper bounds in Lemma \ref{Lemma 3.4}, Lemma \ref{Lemma 3.7}.
\end{Rem}
\begin{Rem}\label{Remark 1.6}
If we assume the Ramanujan conjecture on Maass cusp forms over $SL(2,\mathbb{Z}),$ then our results can be generalized to Maass cusp form over $SL(2,\mathbb{Z}).$
To generalize our results to Maass cusp forms over $SL(m, \mathbb{Z})$ for $m \geq 3,$ with the generalized Ramanujan conjecture, we need some theorems analogous to Lemma \ref{Lemma 1.3}. For any fixed Hecke Maass cusp form $G$ over $SL(m, \mathbb{Z}),$ it is known that 
$$|\Lambda_{G}(n,1,...,1)|^{2}= \sum_{d|n} \Lambda_{G}(d,1,...,1.d)$$ where $\{\Lambda_{G}(n,1,...,1)\}_{n\geq 1}$ are the normalized Hecke eigenvalues of $G$. Therefore, we need an analog of Lemma \ref{Lemma 1.3} such that 
\begin{equation}\sum_{1 \leq d \leq X} \Lambda_{G}(d,1,...,1,d)e(d\alpha)=O_{\epsilon}(X^{\delta+\epsilon})\end{equation} for some $\delta<1.$ But as far as the author is aware, it does not yet exist in the literature for $m \geq 3.$ If one has $\delta \leq \frac{3}{4},$  then one can apply our methods for $m_{2}$ estimates (so the lower bound of $H$ will be $X^{\frac{5}{6}+\epsilon}$).

\end{Rem}
\subsection{Corollary}
This subsection is devoted to study an average version of Theorem \ref{Theorem 1.1}. Let us start to give an explicit upper bound on \eqref{1theorem1}.
\begin{Lemma}\label{Lemma 1.7}Let \begin{equation}S_{f}(\alpha):=\sum_{X \leq n \leq 2X} \lambda_{f}(n)^{2}e(n\alpha)\end{equation} for $\alpha \in \mathbb{R}.$
Then for any $h \neq 0,$
\begin{equation} \int_{0}^{1} |S_{f}(\alpha)|^{2}e(h\alpha)d\alpha \ll_{f} X (\log \log h)^{16}.
\end{equation}
\end{Lemma}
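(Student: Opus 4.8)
The plan is to start with orthogonality and then bound the resulting shifted convolution by a Shiu--Nair--Tenenbaum type estimate. Expanding $|S_f(\alpha)|^2 = S_f(\alpha)\overline{S_f(\alpha)}$ and integrating term by term against $e(h\alpha)$, the relation $\int_0^1 e(k\alpha)\,d\alpha = \mathbf{1}_{k=0}$ collapses the double sum to the diagonal $n-m+h=0$, i.e. $m=n+h$, giving
\[
\int_0^1 |S_f(\alpha)|^2 e(h\alpha)\,d\alpha = \sum_{\substack{X \le n \le 2X \\ X \le n+h \le 2X}} \lambda_f(n)^2\lambda_f(n+h)^2 ,
\]
a non-negative real quantity. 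It therefore suffices to prove $\sum_{X \le n \le 2X}\lambda_f(n)^2\lambda_f(n+h)^2 \ll_f X(\log\log h)^{16}$.

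Set $g(n):=\lambda_f(n)^2$, which is non-negative and multiplicative; by \eqref{divisor bound} it satisfies $g(p^\nu)\le(\nu+1)^2$, so it grows at most geometrically in $\nu$ and is $\ll_\epsilon n^\epsilon$, exactly the hypotheses for the Nair--Tenenbaum machinery. The key structural fact is that $g$ has mean value of constant order rather than logarithmic: by \eqref{Rankin-Selberg} its partial sums are $\asymp_f X$, and by \eqref{squareprime} one has $\sum_{p\le X}g(p)/p=\sum_{p\le X}1/p+O_f(1)$, so $g$ is genuinely of dimension one. Note that the crude bound $g(n)\le d_2(n)^2$ followed by Cauchy--Schwarz and \eqref{Fomenko} would only give $\ll X\log X$, losing a full power of $\log X$; the dimension-one behaviour of $g$ must be used instead.

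I would then apply a Shiu-type bound for shifted convolutions of non-negative multiplicative functions (Nair--Tenenbaum, in the form made uniform in the shift by Henriot) to the product $g(n)g(n+h)$, obtaining
\[
\sum_{X \le n \le 2X} g(n)g(n+h) \ll \frac{X}{(\log X)^2}\Big(\prod_{p \le 2X}\sum_{\nu \ge 0}\frac{g(p^\nu)}{p^\nu}\Big)^2 \Delta_h ,
\]
where $\Delta_h$ is a local factor supported on the primes dividing $h$. For the main factor, the Hecke relation \eqref{divisor bound} gives $\sum_{\nu}g(p^\nu)p^{-\nu}=1+\lambda_f(p)^2/p+O(p^{-2+\epsilon})$, so by \eqref{squareprime} and Mertens' theorem $\prod_{p\le 2X}\sum_\nu g(p^\nu)p^{-\nu}=\exp\big(\sum_{p\le 2X}\lambda_f(p)^2/p+O_f(1)\big)\asymp_f \log X$; hence the factor $\tfrac{X}{(\log X)^2}(\cdots)^2$ is $\asymp_f X$ and no surplus power of $\log X$ survives.

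Finally I would bound the shift-dependent factor $\Delta_h$, whose local factors are governed by the values $g(p^\nu)$ at primes $p\mid h$; there I estimate $g(p^\nu)=\lambda_f(p^\nu)^2\le d_2(p^\nu)^2=(\nu+1)^2$ via \eqref{divisor bound} and sum the resulting series, so that each local factor is $1+O(1/p)$ and
\[
\Delta_h \ll \prod_{p \mid h}\Big(1 + \frac{C}{p}\Big)^{c} \ll \Big(\frac{h}{\phi(h)}\Big)^{c} \ll (\log\log h)^{16}
\]
for suitable absolute constants $C,c$, using the classical bound $h/\phi(h)\ll\log\log h$; the exponent $16$ records the combined contribution of the two factors $g(n)$ and $g(n+h)$ once the Deligne estimate is inserted at the ramified primes. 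The main obstacle is invoking the shifted-convolution Shiu/Nair--Tenenbaum bound with full uniformity in $h$, together with the local computation that simultaneously cancels $(\log X)^{-2}$ against the two main Euler products to leave exactly $X$ and confines all $h$-dependence to $\Delta_h$.
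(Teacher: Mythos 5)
Your proof is correct and follows essentially the same route as the paper: the ``Shiu's Theorem'' the paper cites (\cite{MRT2}, Lemma 2.3,(ii)) is precisely the Nair--Tenenbaum/Henriot shifted-convolution bound you invoke, and the paper likewise uses the Rankin--Selberg estimate \eqref{squareprime} to cancel the $(\log X)^{-2}$ against the two Euler products and then Deligne plus Mertens to bound the $h$-local factor $\prod_{p\mid h}\big(1+\frac{\lambda_{f}(p)^{4}}{p}\big)$ by $(\log\log h)^{16}$. The only cosmetic difference is that you control the shift factor via $h/\phi(h)\ll\log\log h$, whereas the paper compares $\prod_{p\mid h}\big(1+\frac{16}{p}\big)$ with $\prod_{p\le\log h}\big(1+\frac{16}{p}\big)$; these are equivalent standard arguments.
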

\begin{proof}
By the Deligne bound, summing over prime 
$$\sum_{1<p<X}\sum_{k=2}^{\infty} \frac{\lambda_{f}(p^{k})^{2}}{p^{k}} = O(1).$$ 
Applying Shiu's Theorem (see \cite{MRT2}, Lemma 2.3,(ii)) and Remark \ref{Remark 1.2}, we see that 
\begin{equation}\label{integralsquare}
\begin{split}
 \int_{0}^{1} &|S_{f}(\alpha)|^{2}e(h\alpha)d\alpha \\&\ll  \sum_{X \leq n,n+h \leq 2X} \lambda_{f}(n)^{2}\lambda_{f}(n+h)^{2}
\\&\ll \prod_{p | h} (1+\frac{2\lambda_{f}(p)^{2}}{p}+\frac{\lambda_{f}(p)^{4}}{p}) X\prod_{1<p\leq X}(1-\frac{2}{p})\sum_{n_{1}n_{2} \leq X \atop (n_{1}n_{2},h)=1} \frac{\lambda_{f}(n_{1})^{2} \lambda_{f}(n_{2})^{2}}{n_{1}n_{2}} \\&  \ll \prod_{p | h} (1+\frac{2\lambda_{f}(p)^{2}}{p}+\frac{\lambda_{f}(p)^{4}}{p})(1+\frac{\lambda_{f}(p)^{2}}{p})^{-2} X\prod_{1<p \leq X}(1-\frac{2}{p})  (1+\frac{\lambda_{f}(p)^{2}}{p})^{2} \\   &\ll  X\prod_{p|h}(1+\frac{\lambda_{f}(p)^{4}}{p})\prod_{1<p \leq X}(1-\frac{2}{p})  (1+\frac{\lambda_{f}(p)^{2}}{p})^{2}.
\end{split}
\end{equation}
By \eqref{squareprime}, we have $$\log \Big(\prod_{1<p\leq X}(1+\frac{2\lambda_{f}(p)^{2}-2}{p})\Big) = \sum_{1<p \leq X} \log (1+\frac{2\lambda_{f}(p)^{2}-2}{p}) =O_{f}(1).$$
Therefore, the last term in \eqref{integralsquare} is bounded by $$O_{f}\big(X\prod_{p|h}(1+\frac{\lambda_{f}(p)^{4}}{p})).$$
Using the Deligne bound and the prime number theorem, we see that 
\begin{equation}\begin{split}\prod_{p|h}(1+\frac{\lambda_{f}(p)^{4}}{p}) &\ll \prod_{p \leq \log h}(1+\frac{2^{4}}{p}) \\&\ll (\log \log h)^{16}. \end{split} \nonumber \end{equation}
\end{proof}
\noindent Applying Theorem \ref{Theorem 1.1} and Lemma \ref{Lemma 1.7}, we prove the following corollary.
\begin{Cor}\label{Corollary 1.8}Let $\epsilon$ be a fixed small positive constant and let  $A \in \mathbb{N}.$ Suppose $X^{\frac{2}{3}+\epsilon} \leq H\leq X^{1-\epsilon}.$ There are constants $B_{h}$ such that as $X \rightarrow \infty,$
\begin{equation} \sum_{1\leq h \leq H}  |\sum_{X \leq n\leq 2X} \lambda_{f}(n)^{2}\lambda_{f}(n+h)^{2}-B_{h}X|=O_{f,A.\epsilon}(HX (\log X)^{-A}). \end{equation}

\end{Cor}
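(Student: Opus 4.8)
The plan is to combine the individual asymptotic estimate from Theorem \ref{Theorem 1.1} with the explicit second moment bound from Lemma \ref{Lemma 1.7}, using a standard partition of the $h$-range into "good" and "bad" values. First I would fix $A$ and apply Theorem \ref{Theorem 1.1} with the enlarged parameter $2A$ (or more generally any $A'$ we can afford to lose later): this produces a set of exceptional integers $\mathcal{E} \subseteq [1,H]$ of cardinality $O_{f,A,\epsilon}\big(H(\log X)^{-6A}\big)$ such that for every $h \in [1,H] \setminus \mathcal{E}$ we have the pointwise bound $\big|\sum_{X \leq n \leq 2X} \lambda_{f}(n)^{2}\lambda_{f}(n+h)^{2} - B_{h}X\big| = O_{f,A,\epsilon}\big(X(\log X)^{-2A}\big)$. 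Summing this over the good $h$ immediately contributes $O_{f,A,\epsilon}\big(HX(\log X)^{-2A}\big)$, which is well within the target bound.

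The main work is therefore controlling the contribution of the exceptional set $\mathcal{E}$, where the individual asymptotic fails and we can only use trivial-type pointwise control. Here I would invoke Lemma \ref{Lemma 1.7}, which by the Fourier-analytic identity $\int_0^1 |S_f(\alpha)|^2 e(h\alpha)\,d\alpha$ equals (up to the endpoint contributions) the shifted sum $\sum_{X \leq n,n+h \leq 2X} \lambda_f(n)^2 \lambda_f(n+h)^2$, gives the uniform upper bound $\sum_{X \leq n \leq 2X}\lambda_f(n)^2\lambda_f(n+h)^2 \ll_f X(\log\log h)^{16} \ll_f X(\log\log H)^{16}$ for each $h$. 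Combined with the crude estimate $B_h \ll_f d_2(h)$ stated after the theorem, each summand $\big|\sum_{X \leq n \leq 2X}\lambda_f(n)^2\lambda_f(n+h)^2 - B_h X\big|$ over $h \in \mathcal{E}$ is bounded by $O_{f}\big(X(\log\log H)^{16} + d_2(h)X\big)$. Summing the divisor part over all $h \leq H$ gives $\sum_{h \leq H} d_2(h) \ll H\log H$, and restricting to $\mathcal{E}$ we even get a saving, but the safe bound is to note that over the exceptional set the total is at most $|\mathcal{E}| \cdot O_f\big(X(\log H)^{O(1)}\big)$.

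Thus the exceptional contribution is bounded by $O_{f,A,\epsilon}\big(H(\log X)^{-6A}\big) \cdot O_f\big(X(\log X)^{O(1)}\big) = O_{f,A,\epsilon}\big(HX(\log X)^{-6A + O(1)}\big)$, and by choosing the loss in the exponent of Theorem \ref{Theorem 1.1} large enough relative to $A$ (that is, running the theorem at level $A' = A + C$ for a suitable absolute constant $C$ absorbing the $(\log X)^{O(1)}$ factors coming from both the pointwise upper bound and the sum over $\mathcal{E}$), this is $O_{f,A,\epsilon}\big(HX(\log X)^{-A}\big)$. Adding the good and bad contributions yields the claimed estimate. The only genuinely delicate point is bookkeeping the logarithmic losses: one must verify that the polynomial-in-$\log X$ factors $(\log\log H)^{16}$ and the divisor sum $\sum_{h\leq H} d_2(h)$ are each of size $(\log X)^{O(1)}$ and can be dominated by enlarging the exponent $A$ in the exceptional-set count, which is legitimate since Theorem \ref{Theorem 1.1} holds for every $A \in \mathbb{N}$. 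No new analytic input is required beyond the two cited results.
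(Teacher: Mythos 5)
Your overall strategy coincides with the paper's: apply Theorem \ref{Theorem 1.1} (at an enlarged level) to the non-exceptional $h$, and control the exceptional $h$ pointwise via Lemma \ref{Lemma 1.7} together with $B_h \ll_f d_2(h)$. However, your treatment of the exceptional set contains a genuine error. You correctly bound each exceptional summand by $O_f\big(X(\log\log H)^{16} + d_2(h)X\big)$, but then assert that this is $O_f\big(X(\log X)^{O(1)}\big)$ uniformly, so that the exceptional contribution is $|\mathcal{E}|\cdot O_f\big(X(\log X)^{O(1)}\big)$. That pointwise assertion is false: the divisor function is not polylogarithmically bounded, since its maximal order is $\exp\big((\log 2+o(1))\tfrac{\log h}{\log\log h}\big)$, so for $h$ near $H$ the quantity $d_2(h)$ exceeds any fixed power of $\log X$. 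Consequently no choice of level $A'=A+C$ in Theorem \ref{Theorem 1.1} can repair the bound ``cardinality of $\mathcal{E}$ times $X(\log X)^{O(1)}$,'' because the $(\log X)^{O(1)}$ factor simply is not there. Your fallback, summing $d_2(h)$ over \emph{all} $h\le H$, gives $HX\log H$, which overshoots the target $HX(\log X)^{-A}$ by a factor $(\log X)^{A+1}$; and the remark that ``restricting to $\mathcal{E}$ we even get a saving'' is precisely the point requiring proof, since the exceptional set of Theorem \ref{Theorem 1.1} is not known to avoid $h$ with large $d_2(h)$ --- in fact the part of it coming from the major-arc estimate (see \eqref{3divisor}) consists exactly of such $h$.

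The gap closes with one additional idea: a second-moment (Cauchy--Schwarz) estimate over the exceptional set. Since $\sum_{h\le H} d_2(h)^2 \ll H(\log H)^3$, one has
\[
\sum_{h \in \mathcal{E}} d_2(h) \;\le\; |\mathcal{E}|^{1/2}\Big(\sum_{h\le H} d_2(h)^2\Big)^{1/2} \;\ll\; \big(H(\log X)^{-6A}\big)^{1/2}\big(H(\log X)^{3}\big)^{1/2} \;\ll\; H(\log X)^{-3A+\frac{3}{2}},
\]
so the exceptional contribution is $\ll HX(\log X)^{-3A+\frac{3}{2}} + |\mathcal{E}|\,X(\log\log X)^{16} \ll HX(\log X)^{-A}$, and combined with your good-set contribution $O\big(HX(\log X)^{-2A}\big)$ this proves the corollary. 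For comparison, the paper runs Theorem \ref{Theorem 1.1} at level $A$ itself and pairs the exceptional density $(\log X)^{-3A}$ with the first moment $\sum_{h\le H}d_2(h)\ll H\log X$, arriving at $HX(\log X)^{-3A+1}+HX(\log X)^{-A}$; written out in full rigor, that step also wants exactly this Cauchy--Schwarz argument, so once you insert it your proof and the paper's are essentially identical.
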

\begin{proof}
By Lemma \ref{Lemma 1.7}, 
\begin{equation}\label{1shifted}
\begin{split}
 \sum_{X \leq n \leq 2X} \lambda_{f}(n)^{2}\lambda_{f}(n+h)^{2}-B_{h}X &\ll_{f} X \max(|B_{h}|, (\log \log h)^{16})\\&\ll_{f} X\max(d_{2}(h),(\log\log h)^{16})
\end{split}
\end{equation}
 for all $h \in [1,H].$ By Theorem \ref{Theorem 1.1}, the left-hand side of \eqref{1shifted} is bounded by
 $O_{f,A,\epsilon}(X (\log X)^{-A})$ except for $O_{f,A,\epsilon}(H (\log X)^{-3A})$ many values of $h \in [1,H].$
Since \begin{equation}\begin{split}\sum_{1 \leq h \leq H} d_{2}(h) &\ll H\log H \\&\ll H\log X,\end{split}\nonumber\end{equation} 
\begin{equation}\sum_{1\leq h \leq H}  |\sum_{X \leq n\leq 2X} \lambda_{f}(n)^{2}\lambda_{f}(n+h)^{2}-B_{h}X|\ll_{f,A,\epsilon} HX(\log X)^{-3A+1}+ HX (\log X)^{-A}.\nonumber\end{equation}
\end{proof}
\section{\rm{Notation and the circle method}}
\subsection{Notation}
Throughout this paper, we assume that $X$ is sufficiently large. 
For any two functions $k(x):\mathbb{R} \rightarrow \mathbb{R}$ and $l(x):\mathbb{R} \rightarrow \mathbb{R}^{+}$, we use $k(x)\ll l(x)$ or  $k(x)=O(l(x))$ to denote that there exists a positive constant $C$ such that  $|k(x)| \leq C l(x)$ for all $x.$ For any set $A,$ we use $1_{A}$ to denote the characteristic function on $A.$ Summing over the index $p$ denotes summing over primes. $\log_{2}(x)$ denotes the binary logarithm.
\noindent Let
\begin{align*}
&Q=(\log X)^{B}  \rm{ \enspace for \enspace some \enspace} \it{B}>\rm{10},\\
&I  := [0,1],\\
& \mathcal{M} :=\bigcup_{1 \leq q \leq Q}  \bigcup_{1 \leq a \leq q\atop (a,q)=1} (\frac{a}{q}-X^{-\frac{5}{6}-2\epsilon} , \frac{a}{q}+X^{-\frac{5}{6}-2\epsilon}),\\
& m := \mathcal{M} \setminus I := I \cap \mathcal{M}^{c}, \\
& m_{1}:= \bigcup_{1 \leq q \leq Q} \bigcup_{1 \leq a \leq q \atop (a,q)=1} [\frac{a}{q}-\frac{1}{2}X^{-\frac{1}{2}-\epsilon}, \frac{a}{q}-X^{-\frac{5}{6}-2\epsilon}] \cup [\frac{a}{q}+X^{-\frac{5}{6}-2\epsilon}, \frac{a}{q}+\frac{1}{2}X^{-\frac{1}{2}-\epsilon}],\\
& m_{2}:= \bigcup_{1 \leq q \leq Q} \bigcup_{1 \leq a \leq q \atop (a,q)=1} [\frac{a}{q}-\frac{1}{M_{1}(q,Q)}, \frac{a}{q}-\frac{1}{2}X^{-\frac{1}{2}-\epsilon}] \cup [\frac{a}{q}+\frac{1}{2}X^{-\frac{1}{2}-\epsilon}, \frac{a}{q}+\frac{1}{M_{2}(q,Q)}],\\
&\textrm{Res}_{s=a}f(s):= \textrm{The \thinspace residue \thinspace of\thinspace}f \textrm{\thinspace at \thinspace} s=a,\\ 
&B_{h}  :=\sum_{q=1}^{\infty} \thinspace  \sum_{1\leq a\leq q \atop (a,q)=1} (\sum_{q=q_{0}q_{1}}\frac{\mu(q_{1})}{\phi(q_{1})q_{0}}w_{f,q_{0},q_{1}})^{2}e(h\frac{a}{q})
\end{align*}
where $w_{f,q_{0},q_{1}}$ will be defined in Remark \ref{Remark 3.5}. Note that $m_{1} \cup m_{2} = m.$

\noindent 
Let $g:\mathbb{N} \rightarrow \mathbb{R}$ be a function such that 
$\sum \frac{|g(n)|}{n^{s}}$ converges for $\Re(s)>1$, and let $\chi$ be a Dirichlet character. we denote 
\begin{align*}
D[g](s) &:= \sum_{n=1}^{\infty} \frac{g(n)}{n^{s}},\\
D[g](s,\chi) &:= \sum_{n=1}^{\infty} \frac{g(n)\chi(n)}{n^{s}},\\
D[g](s;q) &:= \sum_{n=1\atop (n,q)=1}^{\infty} \frac{g(n)}{n^{s}}\\
D[g](s,\chi,q_{0}) &:= \sum_{n=1}^{\infty} \frac{g(nq_{0})\chi(n)}{n^{s}},\\
D[g](s,q_{0};q_{1}) &:= \sum_{n=1 \atop (n,q_{1})=1}^{\infty}  \frac{g(nq_{0})}{n^{s}}
\end{align*}
for $\Re(s)>1$ and their analytic continuations elsewhere.
\begin{Def}(\cite{MRT1})
For $k>0$, 
we say that $F : \mathbb{N} \rightarrow \mathbb{C} $ is a $k$-divisor bounded function if
$F(n) \ll_{k}d_{2}(n)^{k}\log^{k}(2+n)$ for all $n \geq 1$.

\end{Def}
\begin{Rem}
The constant function $1$ is a 1-divisor bounded function. By the Deligne bound,  $\lambda_{f}(n)^{2}$ and $ \lambda_{f}(n^{2}) $ are 2-divisor bounded functions. 
\end{Rem}
\subsection{The circle method}

Applying the Hardy-Littlewood circle method and the Deligne bound, we see that 
$$\int_{I} |S_{f}(\alpha)|^{2}e(h\alpha)d\alpha = \sum_{X\leq n \leq 2X} \lambda_{f}(n)^{2}\lambda_{f}(n+h)^{2}+O\big(h\max_{2X-h \leq n \leq 2X} d_{2}(n)^{4}d_{2}(n+h)^{4}\big).$$
Since $d_{2}(n) \ll_{\epsilon} n^{\epsilon},$ the error term of the above equation is bounded by $hX^{\epsilon}.$
Now we state two propositions for major arcs and minor arcs estimates, and we will prove them in Section 3, Section 4-5 respectively.

\begin{Prop}\label{Prop1}(Major arc estimate) Let  $\epsilon>0$  be sufficiently small. Let $1\leq H \leq X^{1-\epsilon}.$ Let $Q=(\log X)^{B}$ for some sufficiently large $B>0.$ Then
\begin{equation}\int_{\mathcal{M}}|S_{f}(\alpha)|^{2}e(h\alpha)d\alpha-B_{h}X= O_{f,B,\epsilon}(X(\log X)^{-\frac{B}{2}}) \end{equation}
 for all but $O_{f,B,\epsilon}\big(H(\log X)^{1-\frac{B}{3}}\big)$ integers $h \in [1,H].$
\end{Prop}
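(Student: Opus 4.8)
\medskip
\noindent\textbf{Proof proposal.} The plan is to run the Hardy--Littlewood dissection arc by arc, extract a main term of the shape (arithmetic factor)$\,\times\,$(analytic factor), and control the error by the twisted-sum estimates of Section~3. First I would record that the arcs making up $\mathcal{M}$ are pairwise disjoint: for distinct $a/q,a'/q'$ with $q,q'\le Q$ one has $|a/q-a'/q'|\ge 1/(qq')\ge Q^{-2}=(\log X)^{-2B}$, which far exceeds the common half-length $\delta:=X^{-\frac{5}{6}-2\epsilon}$. Hence
\begin{equation}
\int_{\mathcal{M}}|S_f(\alpha)|^2 e(h\alpha)\,d\alpha=\sum_{q\le Q}\sum_{\substack{1\le a\le q\\(a,q)=1}}e\Big(\frac{ha}{q}\Big)\int_{|\beta|\le\delta}\big|S_f(\tfrac{a}{q}+\beta)\big|^2 e(h\beta)\,d\beta,
\end{equation}
and the problem reduces to a uniform local analysis of $S_f(\tfrac{a}{q}+\beta)$.

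\medskip
\noindent For the local analysis I would sort $n$ into residue classes modulo $q$, so that $S_f(\tfrac{a}{q}+\beta)=\sum_{r\bmod q}e(ar/q)\sum_{\substack{X\le n\le 2X\\ n\equiv r\,(q)}}\lambda_f(n)^2 e(n\beta)$. Splitting each class by $d=(r,q)$ and using the Hecke decomposition $\lambda_f(n)^2=\sum_{ab=n}\lambda_f(a^2)$ to absorb the factors of $n$ shared with $q$, character orthogonality reduces the coprime part to the twisted sums $\sum_n\lambda_f(n)^2\chi(n)$ with $\chi\bmod q$. The principal character supplies, through the Rankin--Selberg asymptotic \eqref{Rankin-Selberg}, a linear main density $\kappa(q,r)\,t$ for the partial sums, while every non-principal $\chi$ contributes only $O_{f,q,\epsilon}(X^{\frac34+\epsilon})$ by Lemma \ref{Lemma 3.4} and Lemma \ref{Lemma 3.7}. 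Abel summation then inserts the weight $e(n\beta)$ and yields
\begin{equation}
S_f(\tfrac{a}{q}+\beta)=\mathfrak{g}(q)\,V(\beta)+E(a,q,\beta),\qquad V(\beta):=\int_X^{2X}e(t\beta)\,dt,
\end{equation}
where $\mathfrak{g}(q)=\sum_{r\bmod q}e(ar/q)\kappa(q,r)$ and $E\ll_{f,\epsilon} q\,X^{\frac34+\epsilon}(1+|\beta|X)$. Since $\kappa(q,r)$ depends only on $(r,q)$, the inner character sum is a Ramanujan sum that, using $(a,q)=1$, collapses to $\mu(q/d)$; thus $\mathfrak{g}(q)$ is \emph{independent of $a$} and equals $\sum_{q=q_0q_1}\frac{\mu(q_1)}{\phi(q_1)q_0}w_{f,q_0,q_1}$, the quantity sitting inside the square in $B_h$ (see Remark \ref{Remark 3.5}).

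\medskip
\noindent Squaring gives $|S_f|^2=|\mathfrak{g}(q)|^2|V|^2+2\,\Re(\mathfrak{g}(q)V\overline{E})+|E|^2$, which I would integrate term by term against $e(h(\tfrac{a}{q}+\beta))$ over $|\beta|\le\delta$. Extending the main term to $\beta\in\mathbb{R}$ and applying Plancherel gives $\int_{\mathbb{R}}|V(\beta)|^2 e(h\beta)\,d\beta=X-h=X+O(X^{1-\epsilon})$, with discarded tail $\ll\int_{\delta}^{\infty}\beta^{-2}\,d\beta\ll\delta^{-1}=X^{\frac56+2\epsilon}$; combined with $|\mathfrak g(q)|\ll_f d_2(q)^{O(1)}/q$ this produces $X\sum_{q\le Q}|\mathfrak{g}(q)|^2 c_q(h)+O_f(X(\log X)^{-\frac{B}{2}})$, where $c_q(h)$ is the Ramanujan sum. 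The cross term is $\ll|\mathfrak g(q)|\,qX^{\frac34+\epsilon}\int_{|\beta|\le\delta}|V(\beta)|(1+|\beta|X)\,d\beta\ll|\mathfrak g(q)|\,qX^{\frac{11}{12}+\epsilon}$, and $\int_{|\beta|\le\delta}|E|^2\,d\beta\ll q^2X^{\frac72+\epsilon}\delta^3$; summing both over $1\le a\le q\le Q$ stays below $X(\log X)^{-\frac{B}{2}}$ \emph{precisely} because $\delta=X^{-\frac56-2\epsilon}$ makes $X^{\frac72+\epsilon}\delta^3=X^{1-5\epsilon}$. This is the step that fixes the major-arc half-length, and it is where the cancellation $X^{\frac34+\epsilon}$ of Lemma \ref{Lemma 3.4}, Lemma \ref{Lemma 3.7} is indispensable, the trivial bound being fatal.

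\medskip
\noindent It remains to complete the singular series and excise the exceptional $h$. Writing $B_h=\sum_{q=1}^{\infty}|\mathfrak{g}(q)|^2 c_q(h)$, the difference between $B_hX$ and the partial sum above is $X\,R(h)$ with $R(h):=\sum_{q>Q}|\mathfrak{g}(q)|^2 c_q(h)$. Using $\sum_{h\le H}|c_q(h)|\le H\,d_2(q)$ together with the bound $|\mathfrak g(q)|^2\ll_f d_2(q)^{O(1)}/q^2$ coming from \eqref{3dq}, one gets $\sum_{h\le H}|R(h)|\ll_f H\sum_{q>Q}d_2(q)^{O(1)}/q^2\ll_f H(\log X)^{-B+o(1)}$, so by Markov's inequality the set of $h\in[1,H]$ with $|R(h)|>(\log X)^{-\frac{B}{2}}$ has size $\ll_f H(\log X)^{-\frac{B}{2}+o(1)}\ll H(\log X)^{1-\frac{B}{3}}$. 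Off this set the whole discrepancy is $O_{f,B,\epsilon}(X(\log X)^{-\frac{B}{2}})$, as claimed. The main obstacle is the joint calibration carried out in the third paragraph: one must pin down the arithmetic factor $\mathfrak g(q)$ exactly (including the non-coprime classes, treated through the Hecke decomposition and the Ramanujan-sum evaluation) while simultaneously keeping the quadratic error $\int|E|^2$ in check, and it is the latter constraint that dictates the exponent $-\tfrac56-2\epsilon$ defining the major arcs.
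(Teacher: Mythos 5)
Your proposal is correct and follows essentially the same route as the paper: the same arc-by-arc reduction, the same conversion of the additive twist $e(an/q)$ into character sums handled by Lemma \ref{Lemma 3.4}, Remark \ref{Remark 3.5} and Lemma \ref{Lemma 3.7}, Abel summation (Lemma \ref{Lemma 3.8}) to produce $\mathfrak{g}(q)V(\beta)$ with $\mathfrak{g}(q)=D_q$, Fourier inversion/Plancherel for the main term, the identical error calibration forcing $\delta=X^{-\frac56-2\epsilon}$, and completion of the singular series with a Markov-type excision of bad $h$. The only (immaterial) deviations are bookkeeping of the $q$-powers in the error term ($q$ versus the paper's $Q^{5+\epsilon/2}$, both absorbed as powers of $\log X$) and that you apply Markov directly to $\sum_{h\le H}|R(h)|$, whereas the paper bounds $|R(h)|\ll d_2(h)Q^{-1+\epsilon}$ pointwise and applies Markov to $d_2(h)$ --- trivially equivalent averaging arguments.
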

\begin{Prop}\label{Propm}(Minor arc estimate) Let  $\epsilon>0$  be sufficiently small.  Let $X^{\frac{2}{3}+\epsilon} \leq H \leq X^{1-\epsilon},$ and let $k>0$ be fixed integer, $\rho =(\log X)^{-s}$ for some $s>0.$ Let $Q=(\log X)^{B}$ for some sufficiently large $B>0.$  Then 
 \begin{enumerate} \item $(m_{1}$-estimate$)$\label{Prop2} \begin{equation}\int_{m_{1} \cap [\theta -\frac{1}{2H}, \theta+\frac{1}{2H}]}
 |S_{f}(\alpha)|^{2} d\alpha \ll_{f,B,s,k,\epsilon}  \rho X (\log  X)^{-2k-1}\nonumber\end{equation} for all $\theta \in I.$
 \item $(m_{2}$-estimate$)$\label{Prop3}   \begin{equation}
 \int_{m_{2} \cap [\theta -\frac{1}{2H}, \theta+\frac{1}{2H}]}
 |S_{f}(\alpha)|^{2} d\alpha \ll_{f,B,s,k,\epsilon} \rho X (\log  X)^{-2k-1}\nonumber\end{equation} for all $\theta \in I.$
\end{enumerate}
\end{Prop}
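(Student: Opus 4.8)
The plan is to prove the two minor-arc mean-square bounds in Proposition \ref{Propm} by a common strategy, but using genuinely different analytic input for $m_1$ and $m_2$, exactly as the sketch in the introduction foreshadows. In both cases the goal is to control
\[
\int_{m_i \cap [\theta-\frac{1}{2H},\theta+\frac{1}{2H}]} |S_f(\alpha)|^2\,d\alpha,
\]
and the first step in each is to pass from the exponential sum $S_f(\alpha)=\sum_{X\le n\le 2X}\lambda_f(n)^2 e(n\alpha)$ to Dirichlet polynomials on the critical line. The natural device for this is Gallagher's lemma \eqref{Gallagher}, which converts a short-interval $L^2$ average of $S_f$ over frequencies into an integral of $\bigl|\sum_{n} \lambda_f(n)^2/n^{1/2+it}\bigr|^2$ over a window of $t$ of length governed by $H$; this is precisely the passage advertised for $m_1$. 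Since each minor-arc piece is a union over Farey fractions $a/q$ with $q\le Q=(\log X)^B$, I would first reduce to a single arc around $a/q$ by twisting $n\mapsto n$ against additive characters $e(na/q)$, and then expand $\lambda_f(n)^2 e(na/q)$ in terms of Dirichlet characters $\chi\pmod q$ so that the relevant object becomes $D[\lambda_f^2](\tfrac12+it',\chi)$. The width of the $t'$-window and the outer $t$-range are dictated by the endpoints $X^{-5/6-2\epsilon}$, $\tfrac12 X^{-1/2-\epsilon}$ (for $m_1$) and $\tfrac12 X^{-1/2-\epsilon}$, $1/M_i(q,Q)$ (for $m_2$), after the standard change of variables $\alpha=\tfrac{a}{q}+\tfrac{t}{2\pi X}$-type scaling that turns frequency offsets into heights on the line.

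For the $m_2$-estimate I would follow \cite{MRT1} as the introduction promises. The key is the Hecke relation, which lets me write $\lambda_f(n)^2 = \sum_{d^2 e = n}\lambda_f(e^2)\cdot 1$, i.e. $\lambda_f(\cdot)^2 = \lambda_f((\cdot)^2)\ast 1$ as a Dirichlet convolution of the $2$-divisor-bounded function $\lambda_f(m^2)$ with the constant function $1$. After a dyadic decomposition of the two convolution factors I would classify the resulting bilinear pieces as Type $\mathrm{d}_1$ or Type $\RomanNumeralCaps{2}$ sums in the sense of \cite{MRT1} and apply \cite[Proposition 6.1]{MRT1} (quoted above) to each piece. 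The ``good cancellation'' of the factor $1$ supplies the saving $(\log X)^{-A}$ in the off-diagonal, and since $\lambda_f(m^2)$ is $2$-divisor bounded, Miller's bound (Lemma \ref{Lemma 1.3}) replaces the interchangeability of supports that was available in the pure-divisor setting. The arithmetic of choosing the cutoff $Y=HX^{-\epsilon^2}$ and verifying the resulting error $X^{2}(HX^{-\epsilon^2})^{-1/2+2\epsilon}H^{-1}=O(X^{1-\epsilon})$ is exactly the computation \eqref{1errorterm1} recorded in Remark \ref{Remark 1.4}; this is what forces the hypothesis $X^{2/3+\epsilon}\le H$.

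For the $m_1$-estimate I would instead develop the new Gallagher-plus-Parseval argument flagged in the sketch. After Gallagher's lemma the task reduces to a mean-square bound for $D[\lambda_f^2](\tfrac12+it,\chi)$ averaged over $\chi\pmod q$ and over a short $t$-window, on the $m_1$ range where the relevant heights are comparatively small (between $X^{-5/6-2\epsilon}$ and $X^{-1/2-\epsilon}$ in frequency). Here I would invoke Parseval's identity to relate the integral of $|D[\lambda_f^2]|^2$ back to the Rankin--Selberg-type second moment $\sum_{n\sim X}\lambda_f(n)^4$, whose size is controlled by \eqref{Fomenko}, together with the prime-sum estimate \eqref{squareprime} that keeps the relevant Euler products of size $O_f(1)$. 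The point of the range $H\ge X^{2/3+\epsilon}$ (indeed even $H\ge X^{5/6+\epsilon}$, where the sketch says only $m_2$ methods are needed) is that the $m_1$ window is short enough that this mean-square input already yields the target $\rho X(\log X)^{-2k-1}$; choosing $B$ large relative to $k,s$ absorbs the remaining logarithmic losses.

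The main obstacle, I expect, is the $m_2$-estimate rather than $m_1$. In \cite{MRT1} the two convolution factors are both copies of $1$, so their supports can be freely interchanged and both factors enjoy good cancellation; in the present problem one factor is $\lambda_f(m^2)$, which has \emph{no} cancellation of the requisite quality on its own and whose support cannot be swapped with that of $1$. The crux is therefore to show that working with the perturbation $b(m)=\sum_{d\mid m,\,1\le d<HX^{-\epsilon^2}}\lambda_f(d^2)$ in place of $\lambda_f(m)^2$ costs only an admissible error: one must bound $\sum_{n}\bigl(b(n)-\lambda_f(n)^2\bigr)$-type tails using Lemma \ref{Lemma 1.3} and verify that every dyadic block of the truncated convolution still falls under the Type $\mathrm{d}_1$/Type $\RomanNumeralCaps{2}$ hypotheses of \cite[Proposition 6.1]{MRT1}. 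Getting the bookkeeping of these support conditions and the Miller-bound truncation to interlock cleanly — so that the saving survives uniformly over all $q\le Q$ and all $\theta\in I$ — is where the real work of this section lies.
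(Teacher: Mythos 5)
Your $m_{2}$ plan is essentially the paper's own: truncate the Hecke-relation convolution at $Y=HX^{-\epsilon^{2}}$ and kill the tail $d\geq Y$ pointwise with Miller's bound (this is Lemma \ref{Lemma 4.2}, and it is the only place Miller's bound is used --- it does not enter the Type-sum analysis itself), then run the MRT machinery on $b=\lambda_{f,2}1_{[1,HX^{-\epsilon^{2}}]}\ast 1$ via Lemma \ref{Lemma 4.3} and a dyadic decomposition into Type $\rm{d_{1}}$ and Type II pieces. One caveat: you cannot invoke \cite[Proposition 6.1]{MRT1} ``as quoted,'' because as quoted it assumes $H=X^{\frac{8}{33}+\epsilon}$ and $\gamma\geq X^{-\frac{1}{6}-2\epsilon}$, whereas here $H\geq X^{\frac{2}{3}+\epsilon}$ and $\gamma\geq\frac{1}{2}X^{-\frac{1}{2}-\epsilon}$; the paper re-proves that estimate in its own ranges (the unlabeled ``Combinatorial decomposition'' lemma of Subsection 4.2), and checking that the error terms such as $\frac{1}{q_{1}\gamma' H}$ stay small in the new ranges is part of the work.

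The $m_{1}$ half has a genuine gap. Your plan --- Gallagher's lemma, then a Parseval/mean-value reduction to $\sum_{n\sim X}\lambda_{f}(n)^{4}$ controlled by \eqref{Fomenko} --- cannot reach the target $\rho X(\log X)^{-2k-1}$, for two reasons. Quantitatively, the mean value theorem with coefficients $\lambda_{f}(n)^{2}n^{-1/2}$, $n\sim X$, gives $\int_{0}^{T}|D[\lambda_{f}^{2}](\frac{1}{2}+it)|^{2}dt\ll (T+X)\log X$, so this input can never do better than $O(X\log X)$, i.e.\ it merely reproduces the true size of $\int_{0}^{1}|S_{f}|^{2}$ and misses the target by $(\log X)^{2k+2+s}$; nothing in your sketch supplies the extra saving, and the MRT route for supplying it collapses on $m_{1}$ because at the lower endpoint $\gamma\approx X^{-\frac{5}{6}-2\epsilon}$ with $H\approx X^{\frac{2}{3}+\epsilon}$ one has $\gamma H\approx X^{-\frac{1}{6}}<1$, so the error factors $(\rho'+\frac{1}{|\gamma|H})^{2}$ in \eqref{longintegral} blow up --- this is exactly why the paper needs a separate $m_{1}$ method at all. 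Structurally, if you dominate $\int_{m_{1}}|S_{f}|^{2}$ by $\int_{\mathbb{R}}|S_{f}(\frac{a}{q}+\alpha)|^{2}K(\alpha)\,d\alpha$ for a single Fej\'er kernel $K$ (which is what Gallagher's lemma \eqref{Gallagher} converts), you necessarily include the major-arc peak at $\alpha=0$, which contributes $\asymp|D_{q}|^{2}X$ (of size $\asymp X$ when $q=1$), and no mean-square estimate downstream can remove it. The paper's proof is organized around precisely this obstruction: it integrates $|S_{f}|^{2}$ against the \emph{difference} of two kernels $K_{-d}-K_{-c}$, which is $\geq\frac{1}{4}-\frac{1}{\pi^{2}}$ on the $m_{1}$ frequencies (Remark \ref{Remark 5.2}) while its two major-arc main terms cancel identically (Proposition \ref{Proposition 5.6}); the remaining error terms are handled by Perron's formula, a contour shift to $\Re(s)=\frac{1}{2}$, Mellin--Parseval applied to $\bigtriangleup(x)$ (Lemma \ref{Lemma 5.5}), and --- crucially --- the mean-square bound $\int_{T}^{2T}|D[\lambda_{f}^{2}](\frac{1}{2}+it)|^{2}dt\ll_{f,\epsilon}T^{\frac{11}{6}+\epsilon}$ of Lemma \ref{Lemma 5.3}, whose exponent $\frac{11}{6}<2$ comes from the Weyl subconvexity bound for $\zeta$ and the approximate functional equation for $L_{f}^{(2)}$, input strictly stronger than \eqref{Fomenko}. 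Without the kernel-difference cancellation and this subconvex mean-square bound, your $m_{1}$ argument does not close.
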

\begin{Proof-non}
\noindent Set $k=A,B=20A$ and $s=3A.$  Since $I=\mathcal{M} \cup m,$
\begin{equation}
\begin{split}
\int_{I} |S_{f}(\alpha)|^{2}e(h\alpha)d\alpha = \int_{\mathcal{M}}  |S_{f}(\alpha)|^{2}e(h\alpha)d\alpha  +\int_{m}  |S_{f}(\alpha)|^{2}e(h\alpha)d\alpha.
\nonumber
\end{split}
\end{equation}
Therefore, we have
\begin{equation}\sum_{X\leq n \leq 2X} \lambda_{f}(n)^{2}\lambda_{f}(n+h)^{2}- \int_{\mathcal{M}}  |S_{f}(\alpha)|^{2}e(h\alpha)d\alpha  
 \end{equation}
\begin{equation}
=O_{\epsilon}\Big(\big|\int_{m}|S_{f}(\alpha)|^{2}e(h\alpha)d\alpha\big| +hX^{\epsilon^{2}} \Big).\nonumber
\end{equation}
    First, we show that for all but $O_{f,A,\epsilon}((\log X)^{-3A} H)$ many integers $h \in [1,H],$  

\begin{equation}
\int_{m} |S_{f}(\alpha)|^{2}e(h\alpha)d\alpha \ll_{f,A,\epsilon} X(\log X)^{-A}. \nonumber \end{equation}
By the Chebyshev inequality, it is enough to show that
$$\sum_{1 \leq h \leq H} \big|\int_{m}|S_{f}(\alpha)|^{2}e(h\alpha) d\alpha\big|^{2} \ll_{f,A,\epsilon} H X^{2}(\log X)^{-5A}.$$ The proof of this basically follows from \cite[Proposition 3.1]{MRT1}. 
Let $\Phi(x): \mathbb{R} \rightarrow \mathbb{R}$ be an even non-negative Schwartz function such that 

$$\Phi(x) \geq 1 \thinspace \textrm{\thinspace for \thinspace $x \in [0,1]$},$$
$$\widehat{\Phi}(s):= \int_{\mathbb{R}} \Phi(x)e(-xs)dx =0 \thinspace \thinspace \textrm{except \thinspace for \thinspace $s \in [-\frac{1}{2},\frac{1}{2}]$}.$$ 
By the Poisson summation formula, we have
$$\sum_{h} e(h(\alpha-\beta))\Phi(\frac{h}{H})= H \sum_{k} \widehat{\Phi}(H(\alpha-\beta-k)).$$
By \eqref{Fomenko}, we get
\begin{equation}
\begin{split}
\sum_{1 \leq h \leq H} \Big|\int_{m}|S_{f}(\alpha)|^{2}e(h\alpha) d\alpha \Big|^{2} &
\ll_{A} H\int_{m} |S_{f}(\alpha)|^{2} \int_{m \cap [\alpha-\frac{1}{2H}, \alpha+\frac{1}{2H}]} |S_{f}(\beta)|^{2}d\beta d\alpha \\ 
&\ll_{f,A} HX\log X \sup_{\alpha\in {m}}\int_{m\cap [\alpha-\frac{1}{2H}, \alpha+\frac{1}{2H}]} |S_{f}(\beta)|^{2}d\beta.
\\&\ll_{f,A} HX\log X \Big(\sup_{\alpha\in m}\int_{m_{1}\cap [\alpha-\frac{1}{2H}, \alpha+\frac{1}{2H}]} |S_{f}(\beta)|^{2}d\beta
\\&+ \sup_{\alpha\in m}\int_{m_{2}\cap [\alpha-\frac{1}{2H}, \alpha+\frac{1}{2H}]} |S_{f}(\beta)|^{2}d\beta\Big).
\nonumber
\end{split}
\end{equation}
By Proposition \ref{Propm}, this is bounded by 
$$ \ll_{f,A,\epsilon} HX^{2}(\log X)^{-5A}. $$
Therefore, for all but $O_{f,A,\epsilon}((\log X)^{-3A} H)$ many $h \in [1,H],$
$$\sum_{X\leq n \leq 2X} \lambda_{f}(n)^{2}\lambda_{f}(n+h)^{2}- \int_{\mathcal{M}}  |S_{f}(\alpha)|^{2}e(h\alpha)d\alpha  =O_{f,A,\epsilon}\big(X(\log X)^{-A}+hX^{\epsilon^{2}}\big).$$
By Proposition \ref{Prop1} and the condition $H \ll X^{1-\epsilon},$ we see that
$$\sum_{X\leq n \leq 2X} \lambda_{f}(n)^{2}\lambda_{f}(n+h)^{2}- B_{h}X=O_{f,A,\epsilon}\big(X(\log X)^{-A}+X (\log X)^{-10A}\big)$$
for all but $O_{f,A,\epsilon}\big(\max(H(\log X)^{1-\frac{20A}{3}}, (\log X)^{-3A} H)\big)$ many $h \in [1,H].$ \qed
\end{Proof-non}

\section{\rm{Major arc estimates}}
\noindent In this section, we will prove Proposition \ref{Prop1}. In Lemma \ref{Lemma 3.1} we transform the additive twist $e(\frac{a}{q}n)$ to multiplicative twists $\chi(n)$ for some Dirichlet characters $\chi.$ Then we apply some analytic properties of L-functions twisted by Dirichlet characters to get the average values of $\lambda_{f}(q_{0}n)\chi(n)$ in Lemma \ref{Lemma 3.4}, Lemma \ref{Lemma 3.7}.
After attaching $e(n\beta)$ to $\lambda_{f}(n)^{2}e(\frac{a}{q}n)$, we get
$$\sum_{X\leq n\leq 2X} \lambda_{f}(n)^{2}e(\frac{a}{q} n)e(\beta n) \sim \int_{X}^{2X}\sum_{q=q_{0}q_{1}}\frac{\mu(q_{1})}{\phi(q_{1})q_{0}} w_{f,q_{0},q_{1}}e(\beta x) dx$$ (see Lemma \ref{Lemma 3.8}), and this is beneficial to apply the Fourier inversion formula (see \eqref{3FIF}).
\begin{Lemma}\label{Lemma 3.1}Let $s=\sigma +it$ where $\sigma, t \in \mathbb{R}, \sigma>1.$ Let $(a,q)=1.$ Then

$$\sum_{n=1}^{\infty} \lambda_{f}(n)^{2}e(\frac{a}{q}n)n^{-s}= \sum_{q=q_{0}q_{1}} \frac{1}{\phi(q_{1})q_{0}^{s}
} \sum_{\chi (\mathrm{mod} \thinspace q_{1})} \tau(\bar{\chi})\chi(a) \sum_{n=1 \atop (n,q_{1})=1}^{\infty} \lambda_{f}(q_{0}n)^{2}\chi(n)n^{-s}$$
where $$\tau(\chi):= \sum_{1 \leq m \leq q_{1}, \atop(m,q_{1})=1} \chi(m)e(\frac{m}{q_{1}})$$ is the Gauss sum. 
\end{Lemma}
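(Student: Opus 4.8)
The plan is to start from the Dirichlet series $\sum_{n\geq 1}\lambda_f(n)^2 e(an/q)\,n^{-s}$ and to convert the additive character $e(an/q)$ into a linear combination of multiplicative Dirichlet characters. The standard device for this is Gauss-sum inversion: for any residue $n$, one expresses $e(an/q)$ in terms of the Dirichlet characters modulo $q$ (or its divisors) weighted by Gauss sums. The subtlety here is that $\lambda_f(n)^2$ is supported on all $n$, and for a fixed modulus $q$ the value $e(an/q)$ depends only on $\gcd(n,q)$ and on the coprime part of $n$. So first I would split the summation according to $q_0:=\gcd(n,q)$, writing $q=q_0 q_1$ and $n=q_0 n'$ with $(n',q_1)=1$; this naturally produces the outer factorization $\sum_{q=q_0q_1}$ appearing in the statement.

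After this splitting, the phase becomes $e(a q_0 n'/q)=e(a n'/q_1)$, and the inner sum runs over $n'$ coprime to $q_1$. The key step is then the classical identity expressing a primitive additive character on $(\mathbb{Z}/q_1\mathbb{Z})^\times$ as a Gauss-sum-weighted average of multiplicative characters: for $(n',q_1)=1$ one has
\[
e\!\left(\frac{an'}{q_1}\right)=\frac{1}{\phi(q_1)}\sum_{\chi\,(\mathrm{mod}\,q_1)}\tau(\bar\chi)\,\chi(a)\,\chi(n'),
\]
which follows from orthogonality of characters together with the separation $\tau(\bar\chi)\chi(m)=\sum_{(b,q_1)=1}\bar\chi(b)e(bm/q_1)$ and a change of variables $b\mapsto a n' b^{-1}$. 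Substituting this into the inner sum and interchanging the order of summation over $\chi$ and $n'$ immediately yields
\[
\sum_{\substack{n'\geq 1\\(n',q_1)=1}}\lambda_f(q_0 n')^2\,e\!\left(\frac{an'}{q_1}\right)(q_0 n')^{-s}
=\frac{1}{\phi(q_1)\,q_0^{\,s}}\sum_{\chi\,(\mathrm{mod}\,q_1)}\tau(\bar\chi)\chi(a)\sum_{\substack{n'\geq 1\\(n',q_1)=1}}\lambda_f(q_0 n')^2\,\chi(n')\,n'^{-s},
\]
and summing over the factorizations $q=q_0q_1$ reproduces exactly the claimed formula.

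All manipulations are justified by absolute convergence: for $\sigma>1$ the Deligne bound $|\lambda_f(n)^2|\leq d_2(n)^2$ guarantees that $\sum_n d_2(n)^2 n^{-\sigma}$ converges, so Fubini's theorem permits the interchange of the finite character sum with the infinite sum over $n'$ without any delicacy. The main obstacle, such as it is, is purely bookkeeping: one must track the three-way decomposition $n=q_0 n'$ with $q=q_0 q_1$ and $(n',q_1)=1$ carefully, and verify that the Gauss-sum identity is being applied on the correct modulus $q_1$ rather than on $q$ itself. The appearance of $\tau(\bar\chi)$ (rather than $\tau(\chi)$) is dictated by the direction of the character-orthogonality inversion, and I would double-check the conjugation by re-deriving the separation identity for $\tau(\bar\chi)\chi(a)$ directly. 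No deep input is needed beyond orthogonality and the convergence afforded by the Deligne bound.
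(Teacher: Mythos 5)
Your proof is correct, and it is worth noting that the paper does not actually prove this lemma at all: its entire ``proof'' is the citation \cite[Lemma 2.9]{MRT1}. What you have written out is, in substance, the standard derivation that underlies that cited lemma: (i) group $n$ by $q_{0}=\gcd(n,q)$, using the fact that $\gcd(n/q_{0},\,q/q_{0})=1$, which produces the outer sum over factorizations $q=q_{0}q_{1}$ together with the phase reduction $e(an/q)=e(an'/q_{1})$ and the factor $q_{0}^{-s}$; (ii) apply the Gauss-sum inversion
$$e\Bigl(\frac{m}{q_{1}}\Bigr)=\frac{1}{\phi(q_{1})}\sum_{\chi \,(\mathrm{mod}\, q_{1})}\tau(\bar{\chi})\chi(m), \qquad (m,q_{1})=1,$$
with $m=an'$ (legitimate since $(a,q)=1$ forces $(a,q_{1})=1$), and use complete multiplicativity to split $\chi(an')=\chi(a)\chi(n')$; (iii) interchange the finite character sum with the $n'$-sum, justified by absolute convergence from the Deligne bound for $\sigma>1$. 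All three steps are carried out correctly, including the direction of conjugation in $\tau(\bar{\chi})$, which is the detail most easily gotten wrong here. So your argument is a valid, self-contained replacement for the paper's external citation; what it buys is that the lemma no longer depends on \cite{MRT1}, at the modest cost of a page of bookkeeping that the paper chose to outsource.
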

\begin{proof}
see \cite[Lemma 2.9]{MRT1}.
\end{proof}\noindent
From now on we regard $q=q_{0}q_{1}.$ 

\begin{Lemma}$($\cite[Lemma 3]{RA2}$)$
Let $s=\sigma+it$ for $\sigma, t \in \mathbb{R},$ and let 
$$A_{1}(s)=\sum_{n=1}^{\infty} \lambda_{f}(n)^{2}n^{-s},$$
$$\lambda_{f}(p)=2\cos\theta_{p}.$$
Then for $\sigma > 1$, $A_{1}(s)$ has the absolutely convergent Euler product such that 
$$A_{1}(s)=\frac{\zeta^{2}(s)\psi(s)}{\zeta(2s)}$$
where $$\psi(s)=\prod_{p}(1-2p^{-s}\cos2\theta_{p}+p^{-2s})^{-1}.$$
The function $\zeta(s)\psi(s)$ is holomorphic and non-zero for $\sigma \geq 1$.
\end{Lemma}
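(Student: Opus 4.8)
The plan is to split the statement into its algebraic part (the Euler product identity) and its analytic part (holomorphy and non-vanishing for $\sigma\geq 1$), which are of quite different character. For the algebraic part I would begin by noting that the Hecke relation makes $\lambda_f$ multiplicative, so $n\mapsto\lambda_f(n)^2$ is multiplicative, and the Deligne bound \eqref{divisor bound} gives $\lambda_f(n)^2\leq d_2(n)^2\ll_\epsilon n^\epsilon$; hence $A_1(s)$ converges absolutely for $\sigma>1$ and factors as $A_1(s)=\prod_p A_{1,p}(s)$ with $A_{1,p}(s)=\sum_{k\geq 0}\lambda_f(p^k)^2p^{-ks}$. The whole identity then reduces to evaluating this local factor.

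For the local computation I would write $\alpha_p=e^{i\theta_p}$, $\beta_p=e^{-i\theta_p}$, so that $\alpha_p+\beta_p=\lambda_f(p)$ and $\alpha_p\beta_p=1$. Applying the Hecke relation with $m=n=p^k$ yields $\lambda_f(p^k)^2=\sum_{i=0}^k\lambda_f(p^{2i})$. Writing $x=p^{-s}$ and interchanging the two absolutely convergent sums gives $A_{1,p}(s)=\frac{1}{1-x}\sum_{i\geq 0}\lambda_f(p^{2i})x^i$. The even-index generating series is obtained from $\sum_{m\geq 0}\lambda_f(p^m)y^m=\bigl((1-\alpha_p y)(1-\beta_p y)\bigr)^{-1}$ by extracting the even part and setting $x=y^2$, producing $\sum_{i\geq 0}\lambda_f(p^{2i})x^i=\frac{1+x}{(1-\alpha_p^2 x)(1-\beta_p^2 x)}$, whence $A_{1,p}(s)=\frac{1+x}{(1-x)(1-\alpha_p^2x)(1-\beta_p^2x)}$. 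Taking the product over $p$ and using $\prod_p\frac{1+p^{-s}}{1-p^{-s}}=\frac{\zeta^2(s)}{\zeta(2s)}$ together with $1-2p^{-s}\cos 2\theta_p+p^{-2s}=(1-\alpha_p^2p^{-s})(1-\beta_p^2p^{-s})$ (valid since $\alpha_p^2+\beta_p^2=2\cos 2\theta_p$ and $\alpha_p^2\beta_p^2=1$) assembles exactly into $A_1(s)=\zeta^2(s)\psi(s)/\zeta(2s)$.

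For the analytic part I would recognize $\zeta(s)\psi(s)=L(s,\mathrm{sym}^2 f)$, whose reciprocal local factors are nonzero finite numbers, so that $\zeta(s)\psi(s)$ is holomorphic and non-vanishing for $\sigma>1$. At $s=1$, rewriting the identity as $\zeta(s)\psi(s)=A_1(s)\zeta(2s)/\zeta(s)$ and invoking the Rankin–Selberg fact \eqref{Rankin-Selberg} that $A_1$ continues past $\sigma=1$ with only a simple pole at $s=1$ shows that the ratio of the two simple poles of $A_1$ and $\zeta$ is finite and nonzero, with value a positive multiple of the residue $c_{1,f}$; thus $\zeta(s)\psi(s)$ is holomorphic and nonzero at $s=1$. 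The genuine difficulty is the non-vanishing of $\zeta(s)\psi(s)$ on the rest of the line $\sigma=1$. I expect this to be the main obstacle: it does not follow formally from convergence, and must be established either through the Rankin–Selberg integral representation (unfolding $|f|^2$ against a real-analytic Eisenstein series, whose meromorphic continuation confines all poles to $s=1$) combined with a de la Vallée Poussin–type positivity argument, or, in modern language, by appealing to the automorphy of $\mathrm{sym}^2 f$ and the standard non-vanishing of automorphic $L$-functions on the edge of the critical strip.
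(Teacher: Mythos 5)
The paper offers no proof of this lemma at all: it is imported verbatim from Rankin's paper \cite{RA2}, so there is no internal argument to compare against, and your proposal should be judged as a reconstruction of the classical proof that the citation encapsulates. Your algebraic half is correct and complete: multiplicativity of $n\mapsto\lambda_{f}(n)^{2}$, the Hecke identity $\lambda_{f}(p^{k})^{2}=\sum_{i=0}^{k}\lambda_{f}(p^{2i})$, the even-part extraction giving
\begin{equation}
\sum_{i\geq 0}\lambda_{f}(p^{2i})x^{i}=\frac{1+x}{(1-\alpha_{p}^{2}x)(1-\beta_{p}^{2}x)},
\qquad
A_{1,p}(s)=\frac{1+x}{(1-x)(1-\alpha_{p}^{2}x)(1-\beta_{p}^{2}x)},
\end{equation}
and the assembly $\prod_{p}\frac{1+p^{-s}}{1-p^{-s}}=\zeta^{2}(s)/\zeta(2s)$ all check out and yield exactly $A_{1}(s)=\zeta^{2}(s)\psi(s)/\zeta(2s)$. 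The analytic half is where the genuine content lies, and you have diagnosed that correctly: for $\sigma>1$ everything follows from the absolutely convergent Euler product, and at $s=1$ your argument via \eqref{Rankin-Selberg} gives holomorphy; note, however, that non-vanishing at $s=1$ additionally requires $c_{1,f}>0$, which does not follow from \eqref{Rankin-Selberg} alone (that asymptotic only forces $c_{1,f}\geq 0$) but is classical, the residue being a positive multiple of the Petersson norm $\langle f,f\rangle$ via the Rankin--Selberg unfolding. For $s=1+it$ with $t\neq 0$ you defer to either the Rankin--Selberg integral representation with a positivity argument or to automorphy of $\mathrm{sym}^{2}f$ (Shimura, Gelbart--Jacquet) together with edge-of-critical-strip non-vanishing; leaving that step as a pointer to standard theory rather than proving it is the one incomplete spot in your write-up, but it is an acceptable one here, since that deep input is precisely what Rankin's cited lemma packages and is exactly how the paper itself handles it.
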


\begin{Rem}\label{Remark 3.3}
Let $q_{0}=\prod_{i=1}^{k} p_{i}^{\alpha_{i}}$ for some primes $p_{i}$. Let $$D(s):= \sum_{n=1 \atop (n,q_{1})=1}^{\infty} \frac{\lambda_{f}(q_{0}n)^{2}}{n^{s}}.$$
 Then
$$D(s)=\frac{\zeta^{2}(s)\psi(s)}{\zeta(2s)} \times R_{q_{0},q_{1}}(s)$$
 where
$$R_{q_{0},q_{1}}(s):= \prod_{p|q} \frac{(1-p^{-s})^{2}}{1-p^{-2s}} (1-2p^{-s}\cos2\theta_{p}+p^{-2s}) \prod_{i=1}^{k} (\lambda_{f}(p_{i}^{\alpha_{i}})^{2}+\lambda_{f}(p_{i}^{\alpha_{i}+1})^{2}p_{i}^{-s}+...).$$
So that $D(s)$ satisfies the same convexity bound of  $R_{q_{0},q_{1}}(s)L(f\otimes f,s).$  The convexity bound of $L(f\otimes f,s)$ on $\Re(s) =\frac{1}{2}$ means $L(f\otimes f,s) \ll_{f,\epsilon} |s|^{1+\epsilon}$.
\end{Rem}
\noindent By Remark \ref{Remark 3.3}, we get the following asymptotic formula.

\begin{Lemma}\label{Lemma 3.4} Let  $\epsilon>0$  be sufficiently small. Then

$$ \sum_{X \leq n\leq 2X\atop(n,q_{1})=1} \lambda_{f}(q_{0}n)^{2} =\nu_{f,q_{0},q_{1}}X+O_{q_{0},\epsilon,f}(X^{\frac{3}{4}+\epsilon})$$
for some $\nu_{f,q_{0},q_{1}}$.
\end{Lemma}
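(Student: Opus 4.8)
The plan is to extract the asymptotic from the Dirichlet series $D(s)$ described in Remark~\ref{Remark 3.3} via a standard Perron/contour argument, using the factorization
$$D(s)=\frac{\zeta^{2}(s)\psi(s)}{\zeta(2s)}\,R_{q_{0},q_{1}}(s).$$
First I would recognize that the quantity to be estimated is exactly a partial sum of the coefficients of $D(s)$, namely $\sum_{n\le Y,\,(n,q_1)=1}\lambda_f(q_0 n)^2$ evaluated at $Y=2X$ minus $Y=X$. Since $\zeta(s)\psi(s)$ is holomorphic and non-vanishing for $\Re(s)\ge 1$ and $R_{q_0,q_1}(s)$ is given by an absolutely convergent product in a half-plane extending past $\Re(s)=\tfrac12$ (the local factors at $p\mid q$ are polynomials in $p^{-s}$ and the tail products $\lambda_f(p_i^{\alpha_i})^2+\lambda_f(p_i^{\alpha_i+1})^2p_i^{-s}+\cdots$ converge by the Deligne bound), the only singularity of $D(s)$ in $\Re(s)>\tfrac12$ is the double pole of $\zeta^2(s)$ at $s=1$. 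However, because we are summing the smooth main term against the coefficients, the relevant contribution comes from the pole at $s=1$; the factor $\psi(1)/\zeta(2)$ together with $R_{q_0,q_1}(1)$ assembles into the residue constant $\nu_{f,q_0,q_1}$.

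The key steps, in order, are as follows. I would apply a truncated Perron formula to write
$$\sum_{n\le Y,\,(n,q_1)=1}\lambda_f(q_0 n)^2=\frac{1}{2\pi i}\int_{c-iT}^{c+iT}D(s)\frac{Y^s}{s}\,ds+O\!\big(\text{error}\big)$$
with $c=1+1/\log Y$ and a parameter $T$ to be optimized, the error term controlled by the Deligne/divisor bound $\lambda_f(q_0 n)^2\ll_\epsilon (q_0 n)^\epsilon$. Next I would shift the contour leftward to the line $\Re(s)=\tfrac12+\delta$ for a small $\delta$, picking up the residue at $s=1$. This residue is computed by expanding $\zeta^2(s)=(s-1)^{-2}+\cdots$ against the holomorphic part $\psi(s)R_{q_0,q_1}(s)/\zeta(2s)$; since we only want the leading $Y^1$ term (not a $Y\log Y$ term, because $R_{q_0,q_1}(s)$ contributes a factor vanishing to compensate—see below), the residue produces $\nu_{f,q_0,q_1}\,Y$. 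On the shifted line I would invoke the convexity bound for $L(f\otimes f,s)=\zeta^2(s)\psi(s)/\zeta(2s)\cdot(\text{Euler factors})$ noted in Remark~\ref{Remark 3.3}, giving $D(\tfrac12+\delta+it)\ll_{q_0,f,\epsilon}(q_0)^{O(1)}(1+|t|)^{1+\epsilon}$, so the vertical integral and the two horizontal segments are bounded; balancing the Perron truncation error against the contour-shift error and choosing $T$ appropriately yields the total error $O_{q_0,\epsilon,f}(Y^{3/4+\epsilon})$.

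The main obstacle, and the point requiring the most care, is the structure of the pole at $s=1$: naively $\zeta^2(s)$ gives a \emph{double} pole and hence a main term of size $Y\log Y$, yet the statement asserts a clean $\nu_{f,q_0,q_1}X$ with no logarithm. I would resolve this by checking that the local factor $R_{q_0,q_1}(s)$ contains the product $\prod_{p\mid q}\frac{(1-p^{-s})^2}{1-p^{-2s}}$, whose presence is a red herring for the pole order at $s=1$ only if $q=1$; in general one must verify that the Laurent expansion of the holomorphic part still combines with $(s-1)^{-2}$ to give a residue. In fact, since the differentiated term $\frac{d}{ds}\big[Y^s\psi(s)R_{q_0,q_1}(s)/(s\zeta(2s))\big]$ at $s=1$ produces both a $Y$ and a $Y\log Y$ contribution, I expect the honest statement is that the $\log Y$ coefficient is absorbed and the problem only records the leading linear term up to the stated error—so I would show the $Y\log Y$ piece either cancels or is harmlessly folded into $\nu_{f,q_0,q_1}X$ at the level of precision $O(X^{3/4+\epsilon})$ by comparing the $X$ and $2X$ evaluations. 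Verifying that the constant $\nu_{f,q_0,q_1}$ matches (up to the normalizations in $w_{f,q_0,q_1}$ from Remark~\ref{Remark 3.5}) and that all $q_0$-dependence in the error is polynomial is the remaining bookkeeping, which is routine once the residue computation is pinned down.
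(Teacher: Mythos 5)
Your analytic skeleton — write the sum as a Perron/Mellin integral of $D(s)$, shift the contour past $s=1$, bound the shifted integral with the convexity bound of Remark \ref{Remark 3.3}, and balance parameters to get $X^{3/4+\epsilon}$ — is essentially the paper's proof of Lemma \ref{Lemma 3.4} (the paper smooths the cutoff with a compactly supported $\eta$ and uses Mellin inversion, which only simplifies the truncation bookkeeping relative to your sharp-cutoff Perron formula). However, there is a genuine gap, and it sits exactly at the point you yourself call ``the main obstacle'': the order of the pole of $D(s)$ at $s=1$. You assert that $\zeta^{2}(s)$ forces a double pole and hence a potential $Y\log Y$ main term, and you then leave the matter unresolved, offering that the logarithmic piece ``either cancels or is harmlessly folded into $\nu_{f,q_{0},q_{1}}X$ \dots by comparing the $X$ and $2X$ evaluations.'' The second alternative is impossible: if the summatory function were $cY\log Y+c'Y+O(Y^{3/4+\epsilon})$ with $c\neq 0$, the dyadic difference would be $cX\log X+O(X)$, which can never be written as $\nu_{f,q_{0},q_{1}}X+O(X^{3/4+\epsilon})$. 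So as written your argument does not prove the lemma; it leaves open that the stated main term is wrong by a factor of $\log X$.

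The correct resolution is already in the paper, in the quoted lemma of Rankin (Lemma 3.2): \emph{$\zeta(s)\psi(s)$ is holomorphic and non-zero for $\sigma\geq 1$}. Since $\zeta(s)$ has a simple pole at $s=1$, this forces $\psi(1)=0$; indeed $\zeta(s)\psi(s)$ is the symmetric-square $L$-function of $f$. Writing
$$D(s)=\frac{\zeta(s)\,\bigl[\zeta(s)\psi(s)\bigr]}{\zeta(2s)}\,R_{q_{0},q_{1}}(s),$$
one sees that $D(s)$ has only a \emph{simple} pole at $s=1$, with residue $R_{q_{0},q_{1}}(1)\cdot\mathrm{Res}_{s=1}\bigl(\zeta^{2}(s)\psi(s)/\zeta(2s)\bigr)$, and the residue of $D(s)Y^{s}/s$ is exactly $\nu_{f,q_{0},q_{1}}Y$ with no logarithm — consistent with the log-free Rankin--Selberg asymptotic \eqref{Rankin-Selberg}. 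Your parenthetical attempt to locate the compensating mechanism in $R_{q_{0},q_{1}}(s)$ is also misplaced: the factor $\prod_{p\mid q}(1-p^{-s})^{2}/(1-p^{-2s})$ equals $\prod_{p\mid q}(1-1/p)/(1+1/p)\neq 0$ at $s=1$, so it affects neither the existence nor the order of the pole. Once the pole is recognized as simple (with the vanishing coming from $\psi$, not from $R_{q_{0},q_{1}}$ or from any cancellation between $X$ and $2X$), the rest of your contour-shift and convexity argument goes through and coincides with the paper's proof.
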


\begin{proof}
Let $0<Y<\frac{X}{5}.$
Let $\eta$ be a fixed smooth function with a compact support $[X-Y,2X+Y]$ such that $\eta(\frac{x}{X})=1$ for $ X\leq n\leq 2X$, $\eta(x) \in [0,1]$. By the Deligne bound, we have
$$ \sum_{X \leq n \leq 2X \atop (n,q_{1})=1}\lambda_{f}(q_{0}n)^{2}= \sum_{n=1 \atop (n,q_{1})=1}^{\infty}\lambda_{f}(q_{0}n)^{2}\eta(\frac{n}{X}) + O_{q_{0},\epsilon,f}( YX^{\epsilon}).$$
Let $$\tilde{\eta}(s):= \int_{0}^{\infty} \eta(x)x^{s-1}dx.$$
By the Mellin inversion formula, we get
$$\sum_{X\leq n\leq 2X   \atop(n,q_{1})=1} \lambda_{f}(q_{0}n)^{2} =\frac{1}{2\pi i} \int_{2-i\infty}^{2+i\infty} D(s) \tilde{\eta}(s)X^{s} ds+O_{q_{0},\epsilon,f}(YX^{\epsilon}).$$
Let $\nu_{f,q_{0},q_{1}}= \textrm{Res}_{s=1}D(s) \times \tilde{\eta}(1)$. It is easy to check that $\tilde{\eta}(s) \ll_{k} (\frac{X}{Y})^{k-1} |s|^{-k}$ for any $k \in \mathbb{N}.$
By changing the contour to the line $\Re(s)=\frac{1}{2},$ we have
$$\sum_{X\leq n\leq 2X\atop(n,q_{1})=1} \lambda_{f}(q_{0}n)^{2}=\nu_{f,q_{0},q_{1}}X+ \int_{\frac{1}{2}-i\infty}^{\frac{1}{2}+i\infty} D(s)\tilde{\eta}(s)X^{s}ds + O_{q_{0},\epsilon,f}(YX^{\epsilon}).$$
By the convexity bound, $D(s) \ll_{q_{0},\epsilon,f} |s|^{1+\epsilon}$  on $\Re(s)=\frac{1}{2}.$ Therefore, we see that
\begin{equation}\begin{split}\int_{\frac{1}{2}-i\infty}^{\frac{1}{2}+i\infty} D(s)\tilde{\eta}(s)X^{s}ds
&\ll_{q_{0},\epsilon,f} \Big|\int_{\frac{1}{2}-i\frac{X^{1+\epsilon}}{Y}}^{\frac{1}{2}+i\frac{X^{1+\epsilon}}{Y}} D(s)\tilde{\eta}(s)X^{s}ds\Big| + 1\\ &=O_{q_{0},\epsilon,f}\Big(X^{\epsilon}(\frac{X^{\frac{3}{2}}}{Y})\Big).\end{split} \nonumber \end{equation} 
When $Y=X^{\frac{3}{4}},$ the big-O term of the above inequality is  $O_{q_{0},\epsilon,f}(X^{\frac{3}{4}+\epsilon}).$

\end{proof}

\begin{Rem}\label{Remark 3.5}
By the conditions on $\eta$, we have $\tilde{\eta}(1)=1+O(X^{-\frac{1}{4}}).$
From Remark \ref{Remark 3.3} and Lemma \ref{Lemma 3.4}, we get
 \begin{equation}\begin{split}\sum_{X\leq n\leq 2X \atop(n,q_{1})=1} \lambda_{f}(q_{0}n)^{2}- w_{f,q_{0},q_{1}}X&=O_{\epsilon}\Big(X^{\frac{3}{4}+\epsilon}+|\textrm{Res}_{s=1}D(s)||\tilde{\eta}(1)-1|X\Big)\\&=O_{\epsilon}(|\textrm{Res}_{s=1}D(s)|X^{\frac{3}{4}+\epsilon})\end{split} \nonumber \end{equation}  where $$w_{f,q_{0},q_{1}} = c_{1,f}\prod_{p|q} (\frac{p-1}{p+1})(1-\frac{\lambda_{f}(p)^{2}-2}{p}+\frac{1}{p^{2}}) \prod_{p^{l} \parallel q_{0}}(\lambda_{f}(p^{l})^{2}+\lambda_{f}(p^{l+1})^{2}p^{-1}+...).$$ By the Deligne bound, when $q=q_{0}q_{1} \geq 2,$ we see that 
 \begin{equation}\label{3wfq}\begin{split} |w_{f,q_{0},q_{1}}| &\leq c_{1,f} \prod_{p|q} (\frac{p-1}{p+1})(1+\frac{2}{p}+\frac{1}{p^{2}})\prod_{p^{l} \parallel q_{0}}\big((l+1)^{2}+(l+2)^{2}p^{-1}+...)
 \\& \ll  c_{1,f}(\log_{2} q)^{2} \prod_{p|q} (\frac{p-1}{p+1})(1+\frac{2}{p}+\frac{1}{p^{2}})\prod_{p^{l} \parallel q_{0},\atop p \neq 2}(l+1)^{2}\frac{1}{1-\frac{9}{4p}} 
 \\& \ll  c_{1,f}(\log_{2} q)^{4} \prod_{p|q} (1-\frac{1}{p^{2}})\prod_{p|q_{0} \atop p \neq 2 }\frac{1}{1-\frac{9}{4p}} 
 \\ &\ll c_{1,f} (\log_{2}q)^{7}.
 \end{split} \end{equation}
 Since $w_{f,q_{0},q_{1}} \sim \textrm{Res}_{s=1}D(s),$ we conclude that 
  \begin{equation}\label{3squareerror}\sum_{X\leq n\leq 2X \atop(n,q_{1})=1} \lambda_{f}(q_{0}n)^{2}- w_{f,q_{0},q_{1}}X=O_{\epsilon,f}\big((\log q)^{7}X^{\frac{3}{4}+\epsilon} \big). \end{equation}
\end{Rem}
\begin{Rem}\label{Remark 3.6}
Recently, Bingrong Huang \cite{huang2021rankinselberg} proved that
\begin{equation}\label{BH}\sum_{1 \leq n\leq X} \lambda_{f}(n)^{2} = c_{1,f}X+ O_{f}(X^{\frac{3}{5}-\frac{1}{560}+o(1)}).\end{equation}
Following his methods and notations, one can prove that
$$\sum_{1 \leq n \leq X \atop (n,q_{1})=1} \lambda_{f \times f}(q_{0}n)= w_{f,q_{0},q_{1}}X+O_{f}(\sup_{t\in \mathbb{R}}|R_{q_{0},q_{1}}(\frac{1}{2}+it)|X^{\frac{3}{5}-\frac{1}{560}+o(1)}).$$
By the Deligne bound and the following bounds 
$$|(1-\frac{2}{p^{\frac{1}{2}}-1})(1+\frac{1}{p^{\frac{1}{2}}})^{2}|\leq 11.156... \leq 12 \thinspace\;\; \textrm{for} \;\;p \geq 2,\;\;\frac{1}{1-\frac{9}{4p^{\frac{1}{2}}}} \leq 6.685... \leq 7 \thinspace \;\; \textrm{for} \;\;p \geq 7,$$ we see that 
\begin{equation}\label{3Rq0q1} \begin{split} R_{q_{0},q_{1}}(\frac{1}{2}+it) &\ll (\log_{2} q)^{6} |\prod_{p|q}(1-\frac{2}{p^{\frac{1}{2}}-1})(1+\frac{1}{p^{\frac{1}{2}}})^{2}|\prod_{p_{i}^{\alpha_{i}}\parallel q_{0} \atop p_{i} \neq 2,3,5}\frac{\alpha_{i}^{2}}{1-\frac{9}{4p_{i}^{\frac{1}{2}}}}\\&\ll 84^{\log_{2} q} (\log_{2} q)^{6}\\& \ll q^{7} \end{split}\end{equation} $(\log_{2}(84)= 6.39...).$
Therefore, we have
\begin{equation}\label{3q0square}\begin{split}\sum_{1 \leq n \leq X \atop (n,q_{1})=1} \lambda_{f}(q_{0}n)^{2}&=\sum_{1 \leq l \leq X^{\frac{1}{2}}} \mu(l) \Big(w_{f,q_{0},q_{1}}\frac{X}{l^{2}}+O_{f}(q^{7}(\frac{X}{l^{2}})^{\frac{3}{5}-\frac{1}{560}+o(1)})\Big)\\&=w_{f,q_{0},q_{1}}X+O_{f}(q^{7}X^{\frac{3}{5}-\frac{1}{560}+o(1)}).\end{split}\end{equation}
This bound is significantly smaller than the error term in Lemma \ref{Lemma 3.4}. This power saving does not make any difference for our results, so we apply Lemma \ref{Lemma 3.4} instead of \eqref{BH}. One may also apply \eqref{BH} for the following Lemma. 
\end{Rem}

\begin{Lemma}\label{Lemma 3.7} Let  $\epsilon>0$  be sufficiently small. Then for any non-principal character $\chi$ modulo $q_{1},$ 
$$ \sum_{X \leq n\leq 2X \atop(n,q_{1})=1} \chi(n)\lambda_{f}(q_{0}n)^{2} \ll_{\epsilon,f} q^{4+\frac{\epsilon}{2}}X^{\frac{3}{4}+\epsilon}.$$
\end{Lemma}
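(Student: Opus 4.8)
The plan is to prove Lemma \ref{Lemma 3.7} by essentially the same contour-shifting argument used for Lemma \ref{Lemma 3.4}, but now tracking how the presence of a non-principal character $\chi$ modulo $q_{1}$ removes the pole and introduces the conductor dependence $q^{4+\epsilon/2}$. The Dirichlet series to analyze is
\begin{equation}
D(s,\chi,q_{0};q_{1}) := \sum_{n=1 \atop (n,q_{1})=1}^{\infty} \frac{\lambda_{f}(q_{0}n)^{2}\chi(n)}{n^{s}}. \nonumber
\end{equation}
First I would factor this series analogously to Remark \ref{Remark 3.3}. Writing $\lambda_{f}(q_{0}n)^{2}$ via the Hecke relations and separating the Euler factors at primes dividing $q$, the series should factor as $L(f\otimes f \otimes \chi, s)$ times a correcting factor $R_{q_{0},q_{1},\chi}(s)$ supported on $p \mid q$, where $L(f\otimes f\otimes\chi,s) = \zeta(s,\chi)^{2}\psi(s,\chi)/\zeta(2s,\chi^{2})$ or the appropriate Rankin--Selberg twist. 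The decisive point is that for non-principal $\chi$ the twisted Dirichlet $L$-function $\zeta(s,\chi) = L(s,\chi)$ is \emph{entire}, so $D(s,\chi,q_{0};q_{1})$ is holomorphic at $s=1$: there is no main term, which is why the right-hand side of the Lemma has no leading $X$-term.

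Next I would run the smoothing-and-Mellin machinery exactly as in Lemma \ref{Lemma 3.4}: introduce the smooth cutoff $\eta$ with parameter $Y$, pass to $\frac{1}{2\pi i}\int_{(2)} D(s,\chi,q_{0};q_{1})\tilde\eta(s)X^{s}\,ds$ at cost $O_{q_{0},\epsilon,f}(YX^{\epsilon})$, and then shift the contour to $\Re(s)=\frac{1}{2}$. Because there is no pole at $s=1$, no residue is collected and the entire contribution comes from the line $\Re(s)=\frac{1}{2}$. On that line I invoke the convexity bound for the Rankin--Selberg $L$-function twisted by $\chi$: the analytic conductor of $L(f\otimes f\otimes\chi,\tfrac12+it)$ is of size $\asymp q_{1}^{2}(1+|t|)^{2}$ (up to the fixed conductor of $f\otimes f$), so convexity gives a bound of shape $(q_{1}(1+|t|))^{1+\epsilon}$, and the factor $R_{q_{0},q_{1},\chi}(\tfrac12+it)$ contributes at most $q^{O(\epsilon)}(\log_{2}q)^{O(1)} \ll q^{\epsilon/2}$ by the same Deligne-bound estimate carried out in \eqref{3Rq0q1}. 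Using $\tilde\eta(s) \ll_{k} (X/Y)^{k-1}|s|^{-k}$ to truncate the $t$-integral at $|t| \asymp X^{1+\epsilon}/Y$, the line integral is bounded by
\begin{equation}
\ll_{\epsilon,f} q^{\epsilon/2}\, X^{\epsilon}\, q_{1}^{1+\epsilon}\, \frac{X^{3/2}}{Y}. \nonumber
\end{equation}

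Balancing this against the cutoff error $YX^{\epsilon}$ by taking $Y = q_{1}^{1/2}X^{3/4}$ (the analogue of $Y=X^{3/4}$ in Lemma \ref{Lemma 3.4}, now with the conductor folded in) gives a total bound of $O_{\epsilon,f}(q_{1}^{1/2+\epsilon}q^{\epsilon/2}X^{3/4+\epsilon})$. Absorbing $q_{1}^{1/2} \leq q$ into the stated power and collecting the $q^{\epsilon/2}$ from $R$ yields the claimed $q^{4+\epsilon/2}X^{3/4+\epsilon}$; the exponent $4$ is not sharp and is simply a convenient overestimate accounting for the conductor of the twisted $L$-function together with the factor $R_{q_{0},q_{1},\chi}$, which is the only place the polynomial dependence on $q$ enters. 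The main obstacle, and the step deserving the most care, is establishing the correct factorization of $D(s,\chi,q_{0};q_{1})$ into the twisted Rankin--Selberg $L$-function and a controlled finite Euler product $R_{q_{0},q_{1},\chi}$, together with verifying that the analytic conductor is polynomial in $q$ so that the convexity bound produces the stated power of $q$; once the holomorphy at $s=1$ and the conductor bound are in hand, the rest of the argument is a routine repetition of the contour shift in Lemma \ref{Lemma 3.4}.
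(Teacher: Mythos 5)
Your proposal is correct and follows essentially the same route as the paper's proof: smooth the sum with $\eta$, apply Mellin inversion, use that a non-principal $\chi$ removes the pole at $s=1$ (so no residue is collected), shift the contour to $\Re(s)=\frac{1}{2}$, bound the line integral by the convexity bound $q_{1}^{1+\epsilon}|s|^{1+\epsilon}$ times the bound for the finite Euler factor, and choose $Y$ to balance against the cutoff error $YX^{\epsilon}$. The one bookkeeping discrepancy is that \eqref{3Rq0q1} actually gives $R_{q_{0},q_{1}}(\frac{1}{2}+it)\ll q^{7}$, not the $q^{\epsilon/2}$ you attribute to it (which is why the paper takes $Y=q^{4+\frac{\epsilon}{2}}X^{\frac{3}{4}}$ rather than your $Y=q_{1}^{1/2}X^{\frac{3}{4}}$); your sharper claim $R\ll_{\epsilon} q^{\epsilon/2}$ is nevertheless true if one replaces the crude estimate $\omega(q)\leq \log_{2}q$ underlying \eqref{3Rq0q1} by $\omega(q)\ll \log q/\log\log q$, so with either choice of $Y$ the stated bound $q^{4+\frac{\epsilon}{2}}X^{\frac{3}{4}+\epsilon}$ follows.
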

\begin{proof}
Let $0<Y<\frac{X}{5}.$
Let $\eta$ be a fixed smooth function with a compact support $[X-Y,2X+Y]$ such that $\eta(\frac{x}{X})=1$ for $ X\leq n\leq 2X$, $\eta(x) \in [0,1]$. Then
$$ \sum_{X\leq n\leq 2X\atop(n,q_{1})=1} \chi(n)\lambda_{f}(q_{0}n)^{2} -\sum_{X\leq n\leq2X+Y  \atop(n,q_{1})=1} \chi (n)\lambda_{f}(q_{0}n)^{2}\eta(\frac{n}{X}) = O_{\epsilon,f}(YX^{\epsilon}).$$
Let $$D(s,\chi)=\sum_{n=1}^{\infty} \frac{\chi(n)\lambda_{f}(q_{0}n)^{2}}{n^{s}}.$$
 By the Mellin inversion formula, we get 
\begin{equation}\label{3chisquare}\sum_{X\leq n\leq2X \atop(n,q_{1})=1} \chi (n)\lambda_{f}(q_{0}n)^{2} =\frac{1}{2\pi i} \int_{2-i\infty}^{2+i\infty} D(s,\chi) \tilde{\eta}(s)X^{s} ds +O_{\epsilon,f}(YX^{\epsilon}).\end{equation}
Since $D(s,\chi)$ has no pole at $s=1$ (see  \cite[Chapter 7.3]{Go}. Although the results there are stated for Maass cusp forms, the same proof applies equally well to the holomorphic cusp forms),  by changing the contour of the integral \eqref{3chisquare} to the line $\Re(s)=\frac{1}{2},$ we have
\begin{equation}\frac{1}{2\pi i} \int_{2-i\infty}^{2+i\infty} D(s,\chi) \tilde{\eta}(s)X^{s} ds =\frac{1}{2\pi i} \int_{\frac{1}{2}-i\infty}^{\frac{1}{2}+i\infty} D(s,\chi) \tilde{\eta}(s)X^{s}ds+ O_{f,k}(1) \nonumber \end{equation} by using the fact that $\tilde{\eta}(s) \ll_{k} (\frac{X}{Y})^{k-1} |s|^{-k}$ for any $k \in \mathbb{N}.$
By the convexity bound and \eqref{3Rq0q1}, $D(s,\chi) \ll_{\epsilon,f} q^{1+\epsilon+7}|s|^{1+\epsilon}$ on $\Re(s)=\frac{1}{2}.$ Therefore, we see that  
\begin{equation}\label{3error1}\begin{split}\int_{\frac{1}{2}-i\infty}^{\frac{1}{2}+i\infty} D(s,\chi)\tilde{\eta}(s)X^{s}ds
&\ll_{\epsilon,f} \Big|\int_{\frac{1}{2}-i\frac{X^{1+\epsilon}}{Y}}^{\frac{1}{2}+i\frac{X^{1+\epsilon}}{Y}} D(s,\chi)\tilde{\eta}(s)X^{s}ds \Big| + O_{\epsilon,f}(1)\\ &=O_{\epsilon,f}\Big(q^{1+\epsilon+7}X^{\epsilon}(\frac{X^{\frac{3}{2}}}{Y})\Big).\end{split} \end{equation} 
When $Y=q^{4+\frac{\epsilon}{2}}X^{\frac{3}{4}},$ the big-O term in \eqref{3error1} is 
$O_{\epsilon,f}\Big(q^{4+\frac{\epsilon}{2}}X^{\frac{3}{4}+\epsilon}\Big).$ Therefore, \eqref{3chisquare} is bounded by $q^{4+\frac{\epsilon}{2}}X^{\frac{3}{4}+\epsilon}.$
\end{proof}
\noindent Now we get the asymptotic estimates $$\sum_{X \leq n \leq 2X} \lambda_{f}(n)^{2}e\big((\frac{a}{q}+\beta)n\big) \; \sim \int_{X}^{2X}\sum_{q=q_{0}q_{1}}\frac{\mu(q_{1})}{\phi(q_{1})q_{0}} w_{f,q_{0},q_{1}}e(\beta x) dx$$ with some admissible error terms.

\begin{Lemma}\label{Lemma 3.8} $($\cite{MRT1}, \rm{Lemma 2.1}$)$
Let $f:[X,2X] \rightarrow \mathbb{C}$ be a smooth function. Then for any function $g:\mathbb{N} \rightarrow \mathbb{C}$ and absolutely integrable function $h:[X,2X] \rightarrow \mathbb{C}$, 

$$\sum_{X\leq n \leq 2X} f(n)g(n) - \int_{X}^{2X} f(x)h(x) dx \leq |f(2X)E(2X)|+\int_{X}^{2X} |f'(x)E(x)|dx$$ 
where $$E(x):=\max_{X\leq X' \leq x}\big|\sum_{X\leq n \leq X'}g(n)-\int_{X}^{X'} h(x)dx\big|.$$
\end{Lemma}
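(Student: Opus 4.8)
Lemma 3.8 is a general comparison principle: it bounds the difference between a weighted sum $\sum f(n)g(n)$ and the "smoothed" integral $\int f(x)h(x)dx$ in terms of the discrepancy $E(x)$ between the partial sums of $g$ and the partial integrals of $h$.

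The statement is:
$$\sum_{X\leq n \leq 2X} f(n)g(n) - \int_{X}^{2X} f(x)h(x) dx \leq |f(2X)E(2X)|+\int_{X}^{2X} |f'(x)E(x)|dx$$

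where $E(x):=\max_{X\leq X' \leq x}|\sum_{X\leq n \leq X'}g(n)-\int_{X}^{X'} h(x)dx|$.

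The key idea is **summation/integration by parts** (Abel summation in its integral form). Let me define:
- $G(x) := \sum_{X \leq n \leq x} g(n)$ (partial sum of $g$)
- $I(x) := \int_X^x h(t)dt$ (partial integral of $h$)
- $E(x) := G(x) - I(x)$ (the error, which by definition $|E(x)| \leq$ the max)

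So the plan is to write both the sum and the integral in terms of these cumulative functions.

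Let me sketch the proof.

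The plan is to express both $\sum_{X\leq n \leq 2X} f(n)g(n)$ and $\int_{X}^{2X} f(x)h(x) dx$ using the cumulative functions $G(x):=\sum_{X\leq n \leq x} g(n)$ and $I(x):=\int_{X}^{x} h(t)dt$, and then to compare them via their common "error" $E(x):=G(x)-I(x)$. First I would apply Riemann–Stieltjes integration by parts (Abel summation in integral form). Since $G$ is a step function with jumps $g(n)$ at the integers and $I$ is absolutely continuous with derivative $h$, we can write
$$\sum_{X\leq n \leq 2X} f(n)g(n) = \int_{X^{-}}^{2X} f(x)\, dG(x), \qquad \int_{X}^{2X} f(x)h(x)\,dx = \int_{X}^{2X} f(x)\, dI(x).$$
Subtracting, the left-hand side of the claimed inequality becomes $\int_{X^{-}}^{2X} f(x)\, dE(x)$, where $E(x)=G(x)-I(x)$ is a function of bounded variation that vanishes at the left endpoint (so the contribution at $X^{-}$ disappears).

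Next I would integrate by parts on this Stieltjes integral. The standard identity gives
$$\int_{X}^{2X} f(x)\, dE(x) = \bigl[f(x)E(x)\bigr]_{X}^{2X} - \int_{X}^{2X} E(x)\, df(x) = f(2X)E(2X) - \int_{X}^{2X} E(x) f'(x)\, dx,$$
using that $E(X)=0$ and that $f$ is smooth so $df(x)=f'(x)\,dx$. The boundary term produces the $f(2X)E(2X)$ contribution and the remaining integral produces the $\int_X^{2X} f'(x)E(x)\,dx$ contribution.

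Finally, I would bound each piece in absolute value: $|f(2X)E(2X)| \leq |f(2X)E(2X)|$ trivially, and $\bigl|\int_{X}^{2X} E(x) f'(x)\, dx\bigr| \leq \int_{X}^{2X} |f'(x)E(x)|\,dx$ by the triangle inequality for integrals. Summing these gives exactly the stated upper bound. The only point requiring care is the justification of the integration-by-parts step for a Stieltjes integral against a step-function-minus-absolutely-continuous integrator; this is routine once one notes $E$ has bounded variation on $[X,2X]$ and $f$ is smooth, so no regularity obstruction arises. The main (and essentially only) conceptual step is recognizing that the discrepancy between the sum and the integral is governed entirely by the single error function $E$, after which Abel summation does all the work.
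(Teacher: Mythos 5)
Your proof is correct and takes essentially the same route as the paper, whose entire proof is the remark that the lemma ``is easily deduced from summation by parts'': your Riemann--Stieltjes/Abel argument is precisely that one-line proof written out in full. The only cosmetic point is that you reuse the symbol $E$ for the signed discrepancy $G-I$ while the lemma defines $E$ as the running maximum of its absolute value, but since $|G(x)-I(x)|\leq E(x)$ pointwise, passing to absolute values at the end gives exactly the stated bound.
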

\begin{proof}
This is easily deduced from summation by parts. 
\end{proof}
\begin{Rem}\label{Remark 3.9}
In our case, we take $$f(n)=e(n\beta),\; g(n)=\lambda_{f}(n)^{2}e(\frac{a}{q}n),\; h(x)=h=\sum_{q=q_{0}q_{1}}\frac{\mu(q_{1})}{\phi(q_{1})q_{0}} w_{f,q_{0},q_{1}}.$$ Notice that $h(x)$ is a constant on $x.$ It is known that $|\tau(q)| \ll q^{\frac{1}{2}}.$  
Since $1 \leq q \leq Q,$ \begin{equation}\begin{split} E(x)&=\max_{X \leq X' \leq x}\big|\sum_{q=q_{0}q_{1}}\frac{\mu(q_{1})}{\phi(q_{1})q_{0}} w_{f,q_{0},q_{1}}X'+O_{f,\epsilon}(q^{5+\frac{\epsilon}{2}}X^{\frac{3}{4}+\epsilon})-X'h\big| \\&= O_{f,\epsilon}(X^{\frac{3}{4}+\epsilon}Q^{5+\frac{\epsilon}{2}}) \end{split} \end{equation}  for 
$x \in [X,2X].$
\end{Rem}
\noindent Now we are ready to prove Proposition \ref{Prop1}.
\begin{Proof-non4} 
Let $\alpha=\frac{a}{q}+\beta$ for some $ q \leq Q, (a,q)=1.$ 
By Lemma \ref{Lemma 3.4},  Remark \ref{Remark 3.5} and Remark \ref{Remark 3.9}, we see that
\begin{equation}\begin{split}S_{f}(\frac{a}{q}+\beta)&=\sum_{X\leq n\leq 2X} \lambda_{f}(n)^{2}e(\frac{a}{q} n)e(\beta n)\\&=\int_{X}^{2X}\sum_{q=q_{0}q_{1}}\frac{\mu(q_{1})}{\phi(q_{1})q_{0}} w_{f,q_{0},q_{1}}e(\beta x) dx +O_{\epsilon}\big((|\beta|+\frac{1}{X})X^{\frac{7}{4}+\frac{\epsilon}{8}}Q^{5+\frac{\epsilon}{2}}\big). \end{split}\nonumber \end{equation} 
Let $$D_{q}:= \sum_{q=q_{0}q_{1}} \frac{\mu(q_{1})}{\phi(q_{1})q_{0}}w_{f,q_{0},q_{1}}.$$ 
By \eqref{3wfq}, we have
\begin{equation}\label{3dq} \begin{split} 
|D_{q}| &\ll_{f} \sum_{q=q_{0}q_{1}} \frac{1}{\phi(q_{1})q_{0}}|w_{f,q_{0},q_{1}}| \\&\ll_{f} \sum_{q=q_{0}q_{1}} \frac{1}{q} c_{1,f}(\log_{2} q)^{7} \\&\ll_{f} \frac{(\log q)^{7}}{q} \sum_{q=q_{0}q_{1}}1 \\&\ll_{f} \frac{d_{2}(q) (\log q)^{7}}{q}.
\end{split} \end{equation}
By the Fourier inversion formula, we have
\begin{equation}\label{3FIF} \begin{split}\int_{\mathbb{R}}\big|\int_{X}^{2X} e(\beta x) dx\big|^{2} e(\beta h) d\beta &= \int_{\mathbb{R}} 1_{[X,2X]}(x)1_{[X,2X]}(x+h) dx\\&=X+O(h)\\&=\big(1+O(X^{-\epsilon})\big)X.\nonumber\end{split}\end{equation} 
By the Deligne bound, we have
 $$\int _{|\beta|\leq X^{-\frac{5}{6}-2\epsilon }}|S_{f}(\frac{a}{q}+\beta)|^{2}e\big(h(\frac{a}{q}+\beta)\big)d\beta=e(h\frac{a}{q})X |D_{q}|^{2}\big(1+O_{\epsilon}(X^{-\epsilon})\big) $$ 
\begin{equation}\begin{split}
& + O_{f,\epsilon}\Big(\int_{|\beta|\leq X^{-\frac{5}{6}-2\epsilon}}\big(|D_{q}|X^{\frac{7}{4}+\frac{\epsilon}{8}}Q^{5+\frac{\epsilon}{2}}+  (|\beta|^{2}+\frac{1}{X^{2}})X^{\frac{7}{2}+\frac{\epsilon}{4}}Q^{10+\epsilon} \big)d\beta \\&+ \int_{|\beta|>X^{-\frac{5}{6}-2\epsilon}}     \frac{|D_{q}|^{2}}{ |\beta|^{2}}d\beta\Big) 
\\&= e(h\frac{a}{q}) |D_{q}|^{2}X +  O_{f,\epsilon}\big(|D_{q}|^{2}X^{1-\epsilon}+|D_{q}|X^{\frac{11}{12}+\epsilon}+|D_{q}|^{2} X^{\frac{5}{6}+2\epsilon}\big). \nonumber \end{split}\end{equation} 
Therefore, we see that
\begin{equation}
\begin{split}
\sum_{1 \leq q \leq Q} \sum_{1 \leq a \leq q \atop(a,q)=1}& \int _{|\beta|\leq X^{-\frac{5}{6}-2\epsilon} }|S_{f}(\frac{a}{q}+\beta)|^{2}e\big(h(\frac{a}{q}+\beta)\big)d\beta 
\\&=X\sum_{1 \leq q\leq Q} c_{q}(h)|D_{q}|^{2}+O_{f,\epsilon}(X^{1-\epsilon})\\
&=X\big(B_{h} - \sum_{q> Q} c_{q}(h)|D_{q}|^{2}\big)+O_{f,\epsilon}(X^{1-\epsilon})
\nonumber
\end{split}
\end{equation}
where $c_{q}(h)$ is the Ramanujan sum. It is known that for fixed $h,$ $|c_{q}(h)|\leq (q,h).$  Since $|D_{q}|^{2}\ll_{f.\epsilon} \frac{1}{q^{2-\epsilon}},$ we have

\begin{equation} \begin{split} \sum_{q> Q} c_{q}(h)|D_{q}|^{2} &\ll_{f,\epsilon} \sum_{d|h} \sum_{q>Q \atop (h,q)=d} \frac{d}{q^{2-\epsilon}}\\ &\ll_{f,\epsilon} \sum_{d|h} \sum_{q>\frac{Q}{d}} \frac{d}{(dq)^{2-\epsilon}}\\ &\ll_{f,\epsilon}  \sum_{d|h}Q^{-1+\epsilon}    .\end{split}\end{equation}
By the Markov inequality and using the fact that $$\sum_{1 \leq n\leq H} d_{2}(n) \sim H \log H,$$ we see that  \begin{equation}\label{3divisor}d_{2}(h) \ll (\log X)^{\frac{B}{2.9}}\end{equation} for all but $O_{f,B,\epsilon}\big(H(\log X)^{1-\frac{B}{3}}\big)$ many values of $h \in [1,H].$
 Therefore, by \eqref{3divisor}, we conclude that 
\begin{equation}\int_{\mathcal{M}} |S_{f}(\beta)|^{2}e(h\beta)d\beta -B_{h}X=O_{f,B,\epsilon}(X(\log X)^{-\frac{B}{2}}) \nonumber \end{equation}
 for all but $O_{f,B,\epsilon}\big(H(\log X)^{1-\frac{B}{3}} \big)$ many values of $ h \in [1,H].$ \qed
\end{Proof-non4}
\section{$m_{2}$-estimate }
\noindent In this section we will prove the $m_{2}$-estimate in Proposition \ref{Propm}. In subsection 4.1 we will apply Lemma \ref{Lemma 1.3} to treat $\lambda_{f}(n^{2})$ in $S_{f}(\alpha)$ for $n> HX^{-\epsilon^{2}}.$ In subsection 4.2 we will basically follow the methods applied in \cite{MRT1}  (see Lemma \ref{Lemma 4.5}). In subsection 4.3, subsection 4.4 we will finish proving the $m_{2}$-estimate in Proposition \ref{Propm}.
\subsection{A modification of $\lambda_{f}(n)^{2}$}
Subsection 4.1 is devoted to show that 
 \begin{equation}\label{41}\int_{m_{2} \cap [\theta -\frac{1}{2H}, \theta+\frac{1}{2H}]}
 |S_{f}(\alpha)|^{2} d\alpha  \ll_{f,B,s,k,\epsilon} \rho X (\log X)^{-2k-1},\end{equation}
 where $\rho, s$ and $B,k$ are stated in Proposition \ref{Propm}.
\noindent First, we apply the Hecke relations to separate the above integral.
\begin{Lemma}\label{Lemma 4.1} Let $1\leq Y \leq 2X.$ Then 
\begin{equation}\begin{split}\int_{m_{2} \cap [\theta -\frac{1}{2H}, \theta+\frac{1}{2H}]}
 &|S_{f}(\alpha)|^{2} d\alpha \ll\int_{m_{2}\cap [\theta -\frac{1}{2H}, \theta+\frac{1}{2H}]} \Big(  \Big|\sum_{Y\leq d \leq 2X} \lambda_{f}(d^{2}) \sum_{\frac{X}{d} \leq k \leq \frac{2X}{d}}e(dk \alpha)\Big|^{2} \\&
+\Big|\sum_{1 \leq d \leq Y} \lambda_{f}(d^{2}) \sum_{\frac{X}{d} \leq k \leq \frac{2X}{d}}e(dk \alpha)\Big|^{2}\Big) d\alpha.
\end{split} \nonumber
\end{equation}
\end{Lemma}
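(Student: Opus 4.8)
The plan is to expand $\lambda_f(n)^2$ as a Dirichlet convolution by means of the Hecke relations, and then split the resulting double sum at the threshold $Y$. First I would record the pointwise identity
$$\lambda_f(n)^2 = \sum_{d \mid n} \lambda_f(d^2).$$
This is immediate from the Hecke multiplicativity relation stated in the introduction: setting $m=n$ there gives $\lambda_f(n)^2 = \sum_{d \mid n} \lambda_f(n^2/d^2)$, and reindexing by $e = n/d$ (which again runs over the divisors of $n$, with $n^2/d^2 = e^2$) turns the right-hand side into $\sum_{e \mid n} \lambda_f(e^2)$. In convolution notation this reads $\lambda_f(\cdot)^2 = \lambda_f(\cdot^2) \ast 1$, exactly the decomposition advertised in the sketch of proof.

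Next I would substitute this identity into $S_f(\alpha)=\sum_{X \le n \le 2X}\lambda_f(n)^2 e(n\alpha)$ and interchange the (finite) order of summation. Writing each $n \in [X,2X]$ as $n = dk$ with $d \mid n$, every divisor $d$ of such an $n$ satisfies $d \le 2X$, and for fixed $d$ the complementary factor $k = n/d$ runs over $[X/d, 2X/d]$; hence
$$S_f(\alpha) = \sum_{1 \le d \le 2X} \lambda_f(d^2) \sum_{\frac{X}{d} \le k \le \frac{2X}{d}} e(dk\alpha).$$
I would then split the $d$-summation at $Y$, so that $S_f = S_1 + S_2$, where $S_1$ collects the terms with $1 \le d \le Y$ and $S_2$ those with $Y \le d \le 2X$; these are precisely the two trigonometric polynomials on the right-hand side of the asserted inequality (the single boundary term $d=Y$ may be assigned to either piece and does not affect the final bound). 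Applying the elementary inequality $|a+b|^2 \le 2|a|^2 + 2|b|^2$ pointwise gives
$$|S_f(\alpha)|^2 \le 2|S_1(\alpha)|^2 + 2|S_2(\alpha)|^2,$$
and integrating over $m_2 \cap [\theta - \tfrac{1}{2H}, \theta + \tfrac{1}{2H}]$ yields the claim, the factor $2$ being absorbed into the implied constant.

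The content here is purely the exact convolution identity together with the reindexing, so there is no analytic obstacle: all sums are finite and the interchange of summation is trivially legitimate. The genuine difficulty is deferred to the subsequent subsections, where the tail $S_2$ (large $d$) will be controlled via the bound of Miller in Lemma \ref{Lemma 1.3}, and the main piece $S_1$ (small $d$) via the decomposition-and-large-values arguments adapted from \cite{MRT1}. The only thing worth flagging at this stage is the choice of the cutoff $Y$, which must later be taken of size $HX^{-\epsilon^2}$ (cf. Remark \ref{Remark 1.4}) in order that Miller's estimate renders the contribution of $S_2$ admissible; the present lemma is exactly the clean algebraic step that makes this separation possible.
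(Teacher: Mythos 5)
Your proposal is correct and takes essentially the same route as the paper: the Hecke identity $\lambda_{f}(n)^{2}=\sum_{d|n}\lambda_{f}(d^{2})$, the reindexing $n=dk$ giving $S_{f}(\alpha)=\sum_{1\leq d\leq 2X}\lambda_{f}(d^{2})\sum_{X/d\leq k\leq 2X/d}e(dk\alpha)$, a split of the $d$-sum at $Y$, and the elementary inequality $|a+b|^{2}\ll |a|^{2}+|b|^{2}$ followed by integration. The paper phrases the split as $S_{f}=(S_{f}-S_{2})+S_{2}$ rather than $S_{1}+S_{2}$, which is the same decomposition, and your additional care in deriving the Hecke identity from the multiplicativity relation and in assigning the boundary term $d=Y$ only strengthens the write-up.
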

\begin{proof}

By the Hecke relations, we have $$ \lambda_{f}(n)^{2} = \sum_{d|n} \lambda_{f}(d^{2}).$$
Therefore, we see that
\begin{equation} 
\begin{split}S_{f}(\alpha) &= \sum_{ X \leq n \leq 2X} \lambda_{f}(n)^{2}e(n\alpha) \\&=\sum_{ X \leq n \leq 2X} \big(\sum_{d|n} \lambda_{f}(d^{2})\big)e(n\alpha) \\&= \sum_{1\leq d \leq 2X} \lambda_{f}(d^{2}) \sum_{\frac{X}{d} \leq k \leq \frac{2X}{d}} e(dk\alpha). 
\end{split}\nonumber \end{equation} By separating the sum over $d,$ we have
\begin{equation} \begin{split}S_{f}(\alpha)= \sum_{X \leq n \leq 2X} \lambda_{f}(n)^{2}e(n\alpha) \thinspace &- \sum_{Y \leq d \leq 2X} \lambda_{f}(d^{2}) \sum_{\frac{X}{d} \leq k \leq \frac{2X}{d}}e(dk \alpha) \thinspace \\&+
\sum_{Y\leq d \leq 2X} \lambda_{f}(d^{2}) \sum_{\frac{X}{d} \leq k \leq \frac{2X}{d}}e(dk \alpha).\end{split} \nonumber\end{equation}
\end{proof}
\noindent We now bound the first integral at the right-hand side of Lemma \ref{Lemma 4.1}.
\begin{Lemma}\label{Lemma 4.2} Let $1\leq Y \leq 2X.$ Then
\begin{equation}\label{4error1}\int_{m_{2}\cap [\theta -\frac{1}{2H}, \theta+\frac{1}{2H}]}\Big|\sum_{Y \leq d \leq 2X} \lambda_{f}(d^{2}) \sum_{\frac{X}{d} \leq k \leq \frac{2X}{d}}e(dk \alpha)\Big|^{2} d\alpha \ll_{f,B,\epsilon} X^{2}Y^{-\frac{1}{2}+2\epsilon}H^{-1}.\end{equation}
\end{Lemma}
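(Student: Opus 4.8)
The plan is to bound the mean square over the minor arc $m_2$ of the ``tail'' exponential sum by expanding the square and reducing to a counting problem. First I would open the square and write
\begin{equation}
\Big|\sum_{Y \leq d \leq 2X} \lambda_{f}(d^{2}) \sum_{\frac{X}{d} \leq k \leq \frac{2X}{d}}e(dk \alpha)\Big|^{2} = \sum_{Y\leq d_1,d_2\leq 2X}\lambda_f(d_1^2)\overline{\lambda_f(d_2^2)}\sum_{\frac{X}{d_1}\leq k_1\leq\frac{2X}{d_1}}\sum_{\frac{X}{d_2}\leq k_2\leq\frac{2X}{d_2}}e\big((d_1k_1-d_2k_2)\alpha\big).\nonumber
\end{equation}
Integrating over the short interval $[\theta-\tfrac{1}{2H},\theta+\tfrac{1}{2H}]$ produces a factor that is $O(1/H)$ pointwise and, crucially, exhibits cancellation unless the frequency $d_1k_1-d_2k_2$ is small compared to $H$. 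The natural way to make this rigorous without tracking $\theta$ is to majorize the characteristic function of the short interval by a fixed nonnegative Fejér-type kernel $K(\alpha)$ with $\widehat K$ supported in $[-H,H]$ and $K\ll H$ on the interval; integrating term by term then replaces the oscillatory integral by $\widehat K(d_1k_1-d_2k_2)/H$ and confines the surviving pairs to $|d_1k_1-d_2k_2|\leq H$.

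Next I would estimate the resulting diagonal and off-diagonal contributions. Using the Deligne bound $|\lambda_f(d^2)|\ll_\epsilon d^\epsilon$, the whole sum is controlled by
\begin{equation}
\ll_{f,\epsilon}\frac{X^{2\epsilon}}{H}\sum_{Y\leq d_1,d_2\leq 2X}\ \#\Big\{(k_1,k_2): \tfrac{X}{d_i}\leq k_i\leq\tfrac{2X}{d_i},\ |d_1k_1-d_2k_2|\leq H\Big\}.\nonumber
\end{equation}
For each fixed quadruple the variables $k_1,k_2$ range over intervals of length $\asymp X/d_1$ and $\asymp X/d_2$, and the condition $|d_1k_1-d_2k_2|\leq H$ pins $k_2$ to an interval of length $\asymp H/d_2$ once $k_1$ is chosen (or vice versa). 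This gives roughly $(X/d_1)(1+H/(d_2\cdot(X/d_2)))=(X/d_1)(1+H/X)\asymp X/d_1$ admissible pairs per $(d_1,d_2)$ in the generic range; summing $\sum_{Y\leq d_1\leq 2X}\sum_{d_2} X/d_1$ and inserting the $1/H$ and $X^{2\epsilon}$ factors, the restriction $d_i\geq Y$ is exactly what forces the $Y^{-1/2}$ saving after one balances the count. The point where the exponent $-\tfrac12$ emerges is the interplay between the length of the $k$-intervals (shorter when $d$ is larger) and the number of near-solutions to $d_1k_1\approx d_2k_2$.

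The main obstacle I anticipate is the careful bookkeeping of the near-diagonal pairs $d_1k_1\approx d_2k_2$: this is essentially a divisor-type counting problem where one must show that, after the constraint $d_i\geq Y$, the number of quadruples $(d_1,k_1,d_2,k_2)$ with $d_ik_i\in[X,2X]$ and $|d_1k_1-d_2k_2|\leq H$ does not exceed $\ll_\epsilon X^{2+2\epsilon}Y^{-1/2}$. I expect this bound to follow by fixing the value $n=d_1k_1\in[X,2X]$ (with $\ll_\epsilon X^\epsilon$ factorizations $n=d_1k_1$ satisfying $d_1\geq Y$, hence $k_1\leq X/Y$ and there are at most $\asymp X/Y$ choices of the smaller divisor, giving the $Y^{-1}$ that square-roots to $Y^{-1/2}$) and then counting $m=d_2k_2$ with $|n-m|\leq H$; the divisor bound on the number of representations supplies the $X^\epsilon$ losses that are absorbed into $X^{2\epsilon}$. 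No appeal to $\alpha\in m_2$ beyond the minor-arc interval length is needed here, so the lemma holds uniformly in $\theta$, which is exactly the form required for the supremum in the proof of Theorem 2.3.
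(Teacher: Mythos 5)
Your approach has a genuine gap, and it occurs exactly at the step you flag as the ``main obstacle'': the counting claim is false, and no amount of bookkeeping can repair it. Once you expand the square and apply the Deligne bound $|\lambda_f(d^2)|\ll_\epsilon d^\epsilon$, you have discarded all oscillation, and what remains is a genuine lower bound on the count. Indeed, for \emph{every} pair $n,m\in[X,2X]$ with $|n-m|\leq H$ the factorizations $(d_1,k_1)=(n,1)$ and $(d_2,k_2)=(m,1)$ are admissible (since $n,m\geq X\geq Y$ and $1\in[X/n,2X/n]$), so the number of quadruples is $\gg XH$, not $\ll X^{2+2\epsilon}Y^{-1/2}$; e.g.\ for $Y\asymp H\asymp X^{1-\epsilon}$ your claimed bound is $\approx X^{3/2+5\epsilon/2}$ while the truth is $\gg X^{2-\epsilon}$. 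Consequently your method yields at best
\begin{equation}
\frac{X^{2\epsilon}}{H}\cdot XH(\log X)^{2}\;\asymp\; X^{1+2\epsilon}(\log X)^{2},\nonumber
\end{equation}
with no saving in $Y$ at all. This is fatal for the application: with $Y=HX^{-\epsilon^{2}}$ and $H\geq X^{2/3+\epsilon}$, the right-hand side of the lemma is $\ll X^{1-\epsilon}$, and it is precisely this power saving below $X$ that the $m_2$-estimate requires. Also, the parenthetical ``at most $X/Y$ choices of the smaller divisor, giving the $Y^{-1}$ that square-roots to $Y^{-1/2}$'' is not a coherent mechanism: the number of factorizations of a fixed $n$ is controlled by the divisor bound $X^{o(1)}$, and summing it over $n\in[X,2X]$ gives $\asymp X\log(X/Y)$, which never produces a factor $Y^{-1/2}$.

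The missing idea is that the saving must come from cancellation in the coefficients, i.e.\ from Miller's bound (Lemma \ref{Lemma 1.3}), which is why the hypothesis $d\geq Y$ is there in the first place. The paper's proof is much shorter and goes in the opposite order: interchange the sums to write the expression as $\sum_{1\leq k\leq 2X/Y}\sum_{d}\lambda_f(d^2)e(d(k\alpha))$, apply Miller's bound (uniform in the real frequency $k\alpha$) to the inner sum to get $\ll_{f,\epsilon}(2X/k)^{3/4+\epsilon}$, and sum over $k\leq 2X/Y$ to obtain the \emph{pointwise} bound $\ll_{f,\epsilon} XY^{-1/4+\epsilon}$. Squaring this and multiplying by the measure $\leq 1/H$ of the integration range gives the lemma; the exponent $-\tfrac12$ is the square of Miller's $Y^{-1/4}$ saving, not the outcome of a diagonal count. (Your observation that only the interval length, and not the arithmetic structure of $m_2$, is used is correct --- the paper's proof uses $m_2$ in the same way --- but that is the only point of contact between the two arguments.)
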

\begin{proof}By rearranging the sum and Lemma 1.3, we see that 
\begin{equation}\label{4pointwise}
\begin{split}
\sum_{Y \leq d \leq 2X} \lambda_{f}(d^{2}) \sum_{\frac{X}{d} \leq k \leq \frac{2X}{d}}e(dk \alpha) &\ll   \sum_{1\leq k\leq \frac{2X}{Y}} \sum_{Y \leq d \leq \frac{2X}{k}} \lambda_{f}(d^{2})e(dk\alpha) \\& \ll_{f,\epsilon} \sum_{1 \leq k \leq \frac{2X}{Y}} (\frac{2X}{k})^{\frac{3}{4}+\epsilon} \\& \ll_{f,\epsilon} X^{\frac{3}{4}+\epsilon} \sum_{1\leq k \leq \frac{2X}{Y}} k^{-\frac{3}{4}-\epsilon} \\& \ll_{f,\epsilon} \frac{X}{Y^{\frac{1}{4}-\epsilon}}. 
\end{split} 
\end{equation}
Using \eqref{4pointwise} as a pointwise bound, we see that
\begin{equation}\int_{m_{2}\cap [\theta -\frac{1}{2H}, \theta+\frac{1}{2H}]}\Big|\sum_{Y \leq d \leq 2X} \lambda_{f}(d^{2}) \sum_{\frac{X}{d} \leq k \leq \frac{2X}{d}}e(dk \alpha)\Big|^{2} d\alpha \ll_{f,B,\epsilon} X^{2}Y^{-\frac{1}{2}+2\epsilon}H^{-1}.\end{equation}
\end{proof}
\noindent Since $H>X^{\frac{2}{3}+\epsilon},$ when $Y=HX^{-\epsilon^{2}},$ the right-hand side of the above inequality is bounded by $X^{1-\epsilon}.$ 
Therefore, to prove \eqref{41}, it is enough to show that 
\begin{equation} \begin{split}
\int_{m_{2} \cap [\theta -\frac{1}{2H}, \theta+\frac{1}{2H}]}  \Big|\sum_{1 \leq d \leq HX^{-\epsilon^{2}}} &\lambda_{f}(d^{2}) \sum_{\frac{X}{d} \leq k \leq \frac{2X}{d}}e(dk \alpha)\Big|^{2} d\theta \\&\ll_{f,B,s,k,\epsilon} \rho X (\log X)^{-2k-1}.
\end{split}\nonumber
\end{equation}
\subsection{Combinatorial decomposition}
Let $$b(n):= \sum_{m|n \atop 1\leq m < HX^{-\epsilon^{2}} }\lambda_{f}(m^{2}),$$
 $$S_{b}(\alpha) := \sum_{X \leq n \leq 2X} b(n)e(n\alpha).$$ 
By the Deligne bound, we see that  $$ \thinspace b(n) \leq \sum_{m|n} d_{2}(m)^{2} \leq d_{2}(n)^{3}, $$ which means that $b(n)$ is a 3-divisor bounded function. 
We will use the following lemma on $S_{b}(\alpha).$ By Lemma \ref{Lemma 4.3} , verifying \eqref{41} is reduced to verifying some integral bounds of Dirichlet polynomials.
\begin{Lemma}\label{Lemma 4.3}
Let $(a,q)=1,$ and let $\gamma, \rho'$ be real numbers such that $|\gamma| \ll \rho' \ll 1.$ Let 
\begin{equation}
I_{\gamma,\rho'}:=  \{ t \in \mathbb{R}: \rho'|\gamma|X \leq |t| \leq \frac{|\gamma|X}{\rho'} \}.
\end{equation}
Then 
$$\int_{\gamma-\frac{1}{H}}^{\gamma+\frac{1}{H}}\big|S_{b}(\frac{a}{q}+\theta)\big|^{2} d \theta$$
 \begin{equation}\label{longintegral}\ll \frac{d_{2}(q)^{4}}{q|\gamma|^{2}H^{2}} \sup_{q=q_{0}q_{1}} \int_{ I_{\gamma,\rho'}} \Big(\sum_{\chi(\mathrm{mod} \thinspace q_{1})}\int_{t-|\gamma|H}^{t+|\gamma|H} \big|\sum_{\frac{X}{q_{0}} \leq n \leq \frac{2X}{q_{0}}} \frac{b(q_{0}n)\chi(n)}{n^{\frac{1}{2}+it'}}\big| dt'\Big)^{2}dt\end{equation} $$ +(\rho'+\frac{1}{|\gamma| H})^{2} \int_{\mathbb{R}}\big(H^{-1} \sum_{x \leq n \leq x+H} |b(n)|\big)^{2} dx.$$

\end{Lemma}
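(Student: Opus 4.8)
The plan is to carry out the circle-method reduction of \cite{MRT1} (the analogue of their Lemma~A.1), adapted to the multiplicative weight $b$. The first step is to trade the additive twist $e(na/q)$ for Dirichlet characters. Writing each $n\in[X,2X]$ as $n=q_0m$ according to a factorization $q=q_0q_1$ with $(m,q_1)=1$, and expanding $e(ma/q_1)$ by the Gauss-sum identity exactly as in Lemma~\ref{Lemma 3.1}, I would obtain
\begin{equation}
S_{b}(\tfrac{a}{q}+\theta)=\sum_{q=q_0q_1}\frac{1}{\phi(q_1)}\sum_{\chi(\mathrm{mod}\,q_1)}\tau(\bar{\chi})\chi(a)\sum_{\frac{X}{q_0}\le m\le\frac{2X}{q_0}}b(q_0m)\chi(m)\,e(q_0m\theta).\nonumber
\end{equation}
The coprimality condition $(m,q_1)=1$ can be removed at the cost of one further factorization, whose cost is absorbed into the divisor factor $d_2(q)$.

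The second step is Cauchy--Schwarz. Estimating $|\tau(\bar\chi)|\le q_1^{1/2}$ and keeping the character sum intact gives $|S_b(\tfrac aq+\theta)|\le\sum_{q=q_0q_1}\frac{q_1^{1/2}}{\phi(q_1)}\sum_\chi|T_{q_0,\chi}(\theta)|$, where $T_{q_0,\chi}$ denotes the inner exponential sum. Applying Cauchy--Schwarz over the $d_2(q)$ factorizations of $q$ and using $q_1/\phi(q_1)\le d_2(q)$ reduces matters, for each fixed factorization $q=q_0q_1$, to bounding
\begin{equation}
\int_{\gamma-\frac{1}{H}}^{\gamma+\frac{1}{H}}\Big(\sum_{\chi(\mathrm{mod}\,q_1)}\big|T_{q_0,\chi}(\theta)\big|\Big)^{2}d\theta,\qquad T_{q_0,\chi}(\theta)=\sum_{\frac{X}{q_0}\le m\le\frac{2X}{q_0}}b(q_0m)\chi(m)\,e(q_0m\theta),\nonumber
\end{equation}
and it is here that the structure $\big(\sum_\chi(\cdots)\big)^2$ on the right of the lemma is preserved; the various Cauchy--Schwarz and divisor estimates together account for the factor $d_2(q)^4$.

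The third, and decisive, step is to convert each $\theta$-integral into a Dirichlet polynomial on the line $\Re(s)=\tfrac12$. After the substitution $\theta=\gamma+\phi$ the coefficients acquire the phase $e(q_0m\gamma)$, and Gallagher's lemma applied over $\phi\in[-\tfrac1H,\tfrac1H]$ bounds the $\theta$-integral by a mean square of short twisted sums $\frac{1}{H}\sum_{x\le q_0m\le x+H}b(q_0m)\chi(m)e(q_0m\gamma)$. The phase $e(q_0m\gamma)$ has logarithmic derivative $\approx 2\pi q_0m\gamma\approx2\pi\gamma X$ on the support, so a Mellin expansion of each short sum matches it against the truncated $D[b](\tfrac12+it',\chi,q_0)$ at frequency $t'\approx2\pi\gamma X$; this is precisely why the outer integration lands on the dilated interval $I_{\gamma,\rho'}$ and why the inner smoothing window has length $|\gamma|H$ (the total phase variation of $e(q_0m\gamma)$ across a block of length $H$ in $n=q_0m$ is $\asymp|\gamma|H$). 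Tracking the normalisations in Gallagher's lemma and in the Mellin inversion then yields the prefactor $1/(q|\gamma|^2H^2)$, which combines with the earlier $d_2(q)^4$ to give $d_2(q)^4/(q|\gamma|^2H^2)$.

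The frequencies $t$ lying outside $I_{\gamma,\rho'}$---the very low range $|t|\lesssim\rho'|\gamma|X$ and the very high range $|t|\gtrsim|\gamma|X/\rho'$, together with the truncation error at scale $1/(|\gamma|H)$---correspond to no stationary point of the phase; for these I would discard the oscillation and bound the short sums trivially by $\frac1H\sum_{x\le n\le x+H}|b(n)|$, which produces exactly the error term $(\rho'+\tfrac{1}{|\gamma|H})^2\int_{\mathbb{R}}\big(H^{-1}\sum_{x\le n\le x+H}|b(n)|\big)^2dx$. The main obstacle is this third step: performing the Gallagher--Mellin conversion so that the Jacobian, the Gauss-sum sizes, and the two nested windows (outer $I_{\gamma,\rho'}$ of length $\asymp|\gamma|X/\rho'$, inner of length $|\gamma|H$) combine to exactly the stated constant, while simultaneously confining all non-stationary and truncated contributions to the $L^1$-error. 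The remaining bookkeeping---the coprimality conditions and the fact that $b$ is only a $3$-divisor bounded function, so that every short divisor sum $\sum_{x\le n\le x+H}|b(n)|$ stays admissible---introduces no new difficulty beyond the framework of \cite{MRT1}.
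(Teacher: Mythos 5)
The paper does not actually prove this lemma: its entire proof is the single line ``See \cite[Corollary 5.3]{MRT1}; $b(n)$ can be replaced by any function,'' the point being that the reduction is purely Fourier-analytic and makes no use of any arithmetic property of the coefficients. Your sketch reconstructs exactly the mechanism behind that cited result --- the Gauss-sum expansion as in Lemma \ref{Lemma 3.1}, Cauchy--Schwarz over the factorizations $q=q_0q_1$ (giving the $d_2(q)^{4}/q$ prefactor), and the Gallagher/Mellin conversion that localizes frequencies to $|t|\asymp|\gamma|X$ while the non-stationary and truncation ranges are absorbed into the $L^{1}$ term --- so your approach coincides with the proof the paper appeals to, and the quantitative bookkeeping you flag as the remaining obstacle is precisely what is carried out in \cite{MRT1}.
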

\begin{proof} See \cite[Corollary 5.3]{MRT1}. $b(n)$ can be replaced by any function. \end{proof}
\noindent In order to apply Lemma \ref{Lemma 4.3}  to the intervals in $m_{2},$ we set $$\gamma \in [\frac{1}{2}X^{-\frac{1}{2}-\epsilon},\;\frac{1}{qQ}], \;\rho'=Q^{-\frac{1}{2}}.$$

\begin{Lemma} $( \thinspace $\cite[Proposition 6,1]{MRT1}, \rm{Combinatorial decomposition}$)$
Let $\epsilon>0$ be sufficiently small, and let $B>0$ be sufficiently large.
Let $X\geq 2$, and set $H>X^{\frac{2}{3}+3\epsilon}.$ 
Set $Q:=(\log X)^{B}$, $\rho'=Q^{-\frac{1}{2}}$, and let $1\leq q_{1}\leq Q.$ 
Let $\gamma'$ be a quantity such that $\frac{1}{2}X^{-\frac{1}{2}-\epsilon}\leq \gamma' \leq \frac{1}{q_{1}Q}$.
Let $g:\mathbb{N}\rightarrow \mathbb{C}$ be a function which is one of the  following forms;
\begin{description} 
\item[\normalfont{Type} $\rm{d_{1}}$ \normalfont{sum} ]
\begin{equation}\label{Typed1} g= \varrho \ast \varsigma \end{equation}
for some 3-divisor bounded arithmetic function $\varrho: \mathbb{N} \rightarrow \mathbb{C}\thinspace$  where $\varrho$ is supported on $[N,2N]$, $\varsigma=1_{(M,2M]}$ for some $N,M$ obeying the bounds
$$ 1 \ll N \ll_{\epsilon} X^{\epsilon^{2}}, \; \frac{X}{Q} \ll NM \ll_{\epsilon} X,$$ and $$X^{-\epsilon^{2}}H \ll M \ll X.$$ 
\item[ \normalfont{Type} $\rm{\RomanNumeralCaps{2}}$ \normalfont{sum} ] 
\begin{equation} g=\varrho \ast \varsigma \end{equation}
for some 3-divisor bounded arithmetic functions $\varrho, \varsigma: \mathbb{N} \rightarrow \mathbb{C}\thinspace$ where $\varrho$ is supported on $[N,2N]$, $\varsigma$ is supported on $[M,2M]$ for some $N,M$ satisfying the bounds
$$ X^{\epsilon^{2}} \ll N \ll X^{-\epsilon^{2}}H, \; \frac{X}{Q} \ll NM \ll_{\epsilon} X.$$
\end{description}
Then 
\begin{equation}\label{4cd} \int_{I_{\gamma',\rho'}} \Big(\sum_{\chi (\mathrm{mod} \thinspace q_{1})} \int_{t-\gamma' H}^{t+\gamma' H} \big| D[g](\frac{1}{2}+it',\chi)\big|dt'\Big)^{2}dt \end{equation} $$\ll_{\epsilon,B}q_{1}|\gamma'|^{2}H^{2}X(\log X)^{-\frac{B}{2.5}}.$$

\end{Lemma}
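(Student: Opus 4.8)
The plan is to obtain this estimate by adapting the proof of \cite[Proposition 6.1]{MRT1}, whose statement is recalled in the introduction, to the parameter ranges $H > X^{\frac{2}{3}+3\epsilon}$ and $\frac{1}{2}X^{-\frac{1}{2}-\epsilon} \le \gamma' \le \frac{1}{q_{1}Q}$ appearing here. The decisive observation is that the required saving $(\log X)^{-\frac{B}{2.5}}$ is weaker than the arbitrary $(\log X)^{-A}$ saving obtained there, so it suffices to match the hypotheses and re-run the argument; any slack in the exponents is then absorbed into the weaker target. First I would split into the two cases according to the form of $g$, since the mechanisms producing cancellation are genuinely different.

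For a Type $\mathrm{d}_{1}$ sum $g=\varrho \ast \varsigma$ with $\varsigma=1_{(M,2M]}$ and $M \gg X^{-\epsilon^{2}}H$, I would factor $D[g](\frac{1}{2}+it',\chi)=D[\varrho](\frac{1}{2}+it',\chi)\,D[\varsigma](\frac{1}{2}+it',\chi)$ and exploit the good cancellation of the indicator factor. Since $\varsigma$ is the characteristic function of a dyadic interval of length $\asymp M$ with $M \gg X^{-\epsilon^{2}}H \gg X^{\frac{2}{3}+2\epsilon}$, the partial-summation bound of the type recalled in the introduction (``$1$ has good cancellation'') gives $D[\varsigma](\frac{1}{2}+it',\chi) \ll M^{\frac{1}{2}}(\log X)^{-C}$ for $t'$ in the relevant range and any fixed $C$, provided $B$ is large in terms of $C$. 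Combining this with the trivial $L^{2}$ bound for $D[\varrho]$ coming from $3$-divisor boundedness, and summing over windows and characters via Gallagher's lemma, yields the claimed estimate for Type $\mathrm{d}_{1}$ sums with room to spare.

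The harder case is the Type II sum, where both $\varrho$ and $\varsigma$ are genuine factors supported on $[N,2N]$ and $[M,2M]$ with $X^{\epsilon^{2}} \ll N \ll X^{-\epsilon^{2}}H$ and $NM \asymp X$, so that no individual factor carries automatic cancellation. Here I would follow the Halász--Montgomery and Huxley large-value route of \cite{MRT1}: after opening the square and applying the large sieve over the windows $[t-\gamma' H, t+\gamma' H]$ and the characters $\chi \pmod{q_{1}}$, one reduces to counting the points $t$ at which $|D[\varrho]|$ and $|D[\varsigma]|$ are simultaneously large, and bounds this count through Huxley's large-value theorem together with fourth-moment mean-value estimates for $3$-divisor-bounded coefficients. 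The two-sided constraint $X^{\epsilon^{2}} \ll N \ll X^{-\epsilon^{2}}H$ guarantees that neither factor is so short that the large-value input becomes lossy, which is exactly what keeps the saving power-of-$\log$.

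The main obstacle, and the only place where the present setup departs substantively from \cite{MRT1}, is verifying that the large-value bookkeeping still produces a $\log$-power saving across the \emph{enlarged} range of $\gamma'$, in particular down to $\gamma' \asymp X^{-\frac{1}{2}-\epsilon}$, which lies below the lower endpoint $X^{-\frac{1}{6}-2\epsilon}$ permitted there. The window around $t \asymp \gamma' X$ now has length $\gamma' H \gg X^{\frac{1}{6}+2\epsilon}$, while $t$ ranges over $I_{\gamma',\rho'}$ of total measure $\asymp \gamma' X Q^{\frac{1}{2}}$; one must check that the ratio of the Dirichlet-polynomial length to this measure keeps the large-value count in the admissible regime throughout. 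Because the support constraints are here phrased directly in terms of $H$ rather than the fixed exponent $\frac{8}{33}$, I would re-run the exponent arithmetic of \cite[Section 6]{MRT1} treating $H$ as a free parameter subject to $H > X^{\frac{2}{3}+3\epsilon}$, and confirm that even the extremal choices of $N,M,\gamma'$ leave a saving of at least $(\log X)^{-\frac{B}{2.5}}$. I expect the bulk of the (routine but delicate) work to reside in this exponent check.
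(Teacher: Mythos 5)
Your Type $\mathrm{d_{1}}$ argument has a genuine gap, and it is exactly at the step you describe as giving the estimate ``with room to spare.'' Granting the pointwise bound $|D[\varsigma](\tfrac12+it',\chi)|\ll M^{\frac12}(\log X)^{-C}$, you are left with
\begin{equation*}
M(\log X)^{-2C}\int_{I_{\gamma',\rho'}}\Big(\sum_{\chi(\mathrm{mod}\,q_{1})}\int_{t-\gamma'H}^{t+\gamma'H}\big|D[\varrho](\tfrac12+it',\chi)\big|\,dt'\Big)^{2}dt,
\end{equation*}
and after Cauchy--Schwarz in the inner integral and Fubini, the hybrid mean value theorem applied to $D[\varrho]$ over (an enlargement of) $I_{\gamma',\rho'}$ produces the factor $q_{1}|I_{\gamma',\rho'}|+N\asymp q_{1}\rho'^{-1}\gamma'X$. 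The resulting bound is
$q_{1}\gamma'^{2}H^{2}X\cdot\big(Mq_{1}\rho'^{-1}\gamma'\big)(\log X)^{-2C+O(1)}$ up to harmless terms. In the Type $\mathrm{d_{1}}$ range one only has $N\ll X^{\epsilon^{2}}$, so $M$ can be of size $X^{1-\epsilon^{2}}$, and then $Mq_{1}\rho'^{-1}\gamma'\gg Q^{\frac12}X^{\frac12-\epsilon-\epsilon^{2}}$ for every admissible $\gamma'$ (and is as large as $MQ^{-\frac12}$ at $\gamma'=\tfrac{1}{q_{1}Q}$): the loss is polynomial in $X$, and no choice of $C$ recovers it. The defect is structural: replacing the long factor $D[\varsigma]$ by its sup-norm throws away the cancellation that must be exploited \emph{in mean}; and the generic mean value theorem for $D[\varsigma]$ fails too, since its diagonal term contributes $M/X\asymp 1/N$, which need not be small. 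The missing idea, which is what the paper uses, is the \emph{fourth-moment} estimate for the indicator's Dirichlet polynomial: the paper writes $g=\varrho\ast\varsigma\ast 1_{\{1\}}$, separates $\sum_{\chi}\int|D[\varrho]|^{2}$ (mean value theorem over the short window, giving $(q_{1}\gamma'H+N)(\log X)^{O(1)}$) from $\sum_{\chi}\int|D[\varsigma]D[1_{\{1\}}]|^{2}$ by Cauchy--Schwarz, pigeonholes $t$ into dyadic ranges $[\tfrac{T}{2},T]$, and invokes $\sum_{\chi}\int_{T/2}^{T}|D[\varsigma]|^{4}\ll q_{1}T(1+q_{1}^{2}/T^{2}+M^{2}/T^{4})(\log T)^{405}$ (\cite[Corollary 2.12]{MRT1}, going back to \cite{Rama1}, \cite{BHP}). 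After Cauchy--Schwarz this yields $q_{1}T(1+q_{1}/T+M/T^{2})(\log X)^{O(1)}$, whose worst term $q_{1}M/T\ll q_{1}Q^{\frac12}X^{\frac12+\epsilon}$ is acceptable precisely because $T\gg\rho'\gamma'X\gg Q^{-\frac12}X^{\frac12-\epsilon}$; this is also exactly where the thresholds $\gamma'\ge\tfrac12X^{-\frac12-\epsilon}$ and $H>X^{\frac23+3\epsilon}$ enter.

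On Type $\mathrm{\RomanNumeralCaps{2}}$ sums you have the difficulty inverted: no Hal\'asz--Montgomery or Huxley large-value analysis is needed (nor is it what the paper does; in \cite{MRT1} Huxley's estimates serve the $m_{1}$ region, not Proposition 6.1). Because both supports satisfy $X^{\epsilon^{2}}\ll N\ll X^{-\epsilon^{2}}H$ and $NM\asymp X$, a single Cauchy--Schwarz plus the hybrid mean value theorem for each factor (short window for $D[\varrho]$, the full range $I_{\gamma',\rho'}$ for $D[\varsigma]$ via Fubini) gives $q_{1}\gamma'^{2}H^{2}X\big(q_{1}\rho'^{-1}\gamma'+Q^{\frac12}N/H+1/N+1/(q_{1}\gamma'H)\big)(\log X)^{O(1)}$, and each bracketed term is $\ll Q^{-\frac12}$ under the stated constraints. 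So Type $\mathrm{\RomanNumeralCaps{2}}$ is the easy case; the delicate one is Type $\mathrm{d_{1}}$, and there your argument as written does not close.
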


\begin{proof}
The proof of this basically follows from the arguments in \cite[Proposition 6.1]{MRT1}. Let $J'=[\rho' \gamma' X, \rho'^{-1} \gamma' X].$ Consider the Type $\rm{\RomanNumeralCaps{2}}$ case. Since $g = \varrho \ast \varsigma$, we have
\begin{equation} D[g](\frac{1}{2}+it',\chi)=D[\varrho](\frac{1}{2}+it',\chi) D[\varsigma](\frac{1}{2}+it',\chi). \end{equation}
By the Cauchy-Schwarz inequality, we have
\begin{equation}\begin{split}\Big(\sum_{\chi  (\mathrm{mod} \thinspace q_{1})} \int_{t-\gamma' H}^{t+\gamma' H} \big|D[g](\frac{1}{2}+it',\chi)\big|dt'\Big)^{2} &\ll \Big(\sum_{\chi(\mathrm{mod} \thinspace q_{1})} \int_{t-\gamma' H}^{t+\gamma' H} \big|D[\varrho](\frac{1}{2}+it',\chi)\big|^{2}dt' \Big) \\& \times \Big( \sum_{\chi (\mathrm{mod} \thinspace q_{1})} \int_{t-\gamma' H}^{t+\gamma' H} \big|D[\varsigma](\frac{1}{2}+it',\chi)\big|^{2}dt'\Big). \nonumber \end{split}\end{equation}
Using the mean value theorem (\cite{MRT1}, Lemma 2.10) and the Fubini theorem, we see that
\begin{equation}\begin{split}
&\sum_{\chi(\mathrm{mod} \thinspace q_{1})} \int_{t-\gamma' H}^{t+\gamma' H} \big|D[\varrho](\frac{1}{2}+it',\chi)\big|^{2}dt' \\&\ll (q_{1}\gamma' H + N) (\log q_{1}\gamma' HN)^{3}\sum_{N \leq n \leq 2N} \frac{\big(d_{2}(n)\log(2+n)\big)^{6}}{n},\\&
\int_{ I_{\gamma',\rho'}} \sum_{\chi (\mathrm{mod} \thinspace q_{1})} \int_{t-\gamma' H}^{t+\gamma' H} | D[\varsigma](\frac{1}{2}+it',\chi)|^{2}dt'dt \\& \ll (\log q_{1}\gamma' HN)^{3}|\gamma' H|(q_{1}\rho'^{-1}\gamma' X +M)\sum_{M \leq m \leq 2M} \frac{\big(d_{2}(m)\log(2+m)\big)^{6}}{m}.\end{split} \nonumber\end{equation}
Applying Shiu's Theorem (see \cite[Lemma 2.3,(i)]{MRT2}), we have
\begin{equation}\label{logpowerterm} (\log q_{1}\gamma' HN)^{3}\sum_{N \leq n \leq 2N} \frac{\big(d_{2}(n)\log(2+n)\big)^{6}}{n} \ll (\log X)^{72}. \end{equation}
Therefore, the left-hand side of \eqref{4cd} is bounded by 
$$\ll_{\epsilon}q_{1}(\rho'^{-1} q_{1} \gamma' + \frac{Q^{\frac{1}{2}}N}{H}+\frac{1}{N}+ \frac{1}{q_{1}\gamma' H})\gamma'^{2}H^{2}X(\log X)^{144} \ll_{\epsilon} q_{1}\rho'^{-1} Q^{-1} \gamma'^{2}H^{2}X(\log X)^{144}.$$ 
Consider the Type $\rm{d_{1}}$ case.
Let $g=\varrho \ast \varsigma \ast 1_{\{1\}}.$ So $$D[g](\frac{1}{2}+it', \chi)=D[\varrho](\frac{1}{2}+it', \chi)D[\varsigma](\frac{1}{2}+it', \chi)D[1_{\{1\}}](\frac{1}{2}+it', \chi).$$
By the Cauchy-Schwarz inequality, we have
\begin{equation} \begin{split} 
\Big(\sum_{\chi (\mathrm{mod} \thinspace q_{1})} \int_{t-\gamma' H}^{t+\gamma' H} |&D[g](\frac{1}{2}+it',\chi)|dt'\Big)^{2} \ll \Big( \sum_{\chi(\mathrm{mod} \thinspace q_{1})} \int_{t-\gamma' H}^{t+\gamma' H} |D[\varrho](\frac{1}{2}+it',\chi)|^{2}dt' \Big) \\&\times \Big(\sum_{\chi (\mathrm{mod} \thinspace q_{1})} \int_{t-\gamma' H}^{t+\gamma' H}  |D[\varsigma](\frac{1}{2}+it',\chi)D[1_{\{1\}}](\frac{1}{2}+it',\chi)|^{2} dt'\Big).  \nonumber
\end{split} \end{equation}
Using the bound 
 \begin{equation} \sum_{\chi(\mathrm{mod} \thinspace q_{1})} \int_{t-\gamma' H}^{t+\gamma' H} |D[\varrho](\frac{1}{2}+it',\chi)|^{2}dt' \ll (q_{1}\gamma' H + N) (\log X)^{72}, \nonumber\end{equation}
the left-hand side of \eqref{4cd} is bounded by 
$$(q_{1}\gamma' H+N)(\log X)^{72}\int_{ I_{\gamma',\rho'}} \sum_{\chi  (\mathrm{mod} \thinspace q_{1})} \int_{t-\gamma' H}^{t+\gamma' H} |D[\varsigma](\frac{1}{2}+it',\chi)D[1_{\{1\}}](\frac{1}{2}+it',\chi)|^{2} dt'dt.$$
By the pigeonhole principle, the above term is bounded by 
$$ (q_{1}\gamma' H+N)(\log X)^{73} \gamma' H\int_{\frac{T}{2}}^{T} \sum_{\chi  (\mathrm{mod} \thinspace q_{1})}|D[\varsigma](\frac{1}{2}+it,\chi)|^{2}|D[1_{\{1\}}](\frac{1}{2}+it,\chi)|^{2} dt $$
for some $T \in J'\:$ where the absolute constant depends on $\epsilon.$
Using the fourth moment estimate (see \cite[Corollary 2.12]{MRT1}) and the condition $q_{1} \leq Q =(\log X)^{B},$ we see that
\begin{equation} \begin{split}\sum_{\chi  (\rm{mod} \thinspace \it{q_{1}})} \int_{\frac{T}{2}}^{T} |D[\varsigma](\frac{1}{2}+it,\chi)|^{4}dt &\ll_{\epsilon} q_{1}T(1+\frac{q_{1}^{2}}{T^{2}}+\frac{M^{2}}{T^{4}})e^{(\log q_{1})^{\frac{1}{2}}}(\log q_{1}T)^{404} \\ &\ll_{\epsilon} q_{1}T(1+\frac{q_{1}^{2}}{T^{2}}+\frac{M^{2}}{T^{4}})(\log T)^{405}\end{split}\end{equation}
(In \cite[Corollary 2.12]{MRT1}, the factor $e^{(\log q_{1})^{\frac{1}{2}}}(\log q_{1}T)^{404}$ was replaced by $(\log X)^{O(1)},$ because this bound was sufficient. This bound would also be sufficient for our argument. For the precise results, We refer the reader to \cite[Theorem 6]{Rama1}, \cite[Lemma 9]{BHP}).
By the trivial bound $|D[1_{\{1\}}](\frac{1}{2}+it,\chi)| \leq 1,$ we see that
$$\sum_{\chi  (\mathrm{mod} \thinspace q_{1})} \int_{\frac{T}{2}}^{T} |D[1_{\{1\}}](\frac{1}{2}+it,\chi)|^{4}dt \ll_{\epsilon}  q_{1}T .$$
By the Cauchy-Schwarz inequality, we see that
$$ \sum_{\chi  (\mathrm{mod} \thinspace q_{1})} \int_{t-\gamma' H}^{t+\gamma' H}  |D[\varsigma](\frac{1}{2}+it',\chi)|^{2}|D[1_{\{1\}}](\frac{1}{2}+it',\chi)|^{2} dt' \ll_{\epsilon} q_{1}T(1+\frac{q_{1}}{T}+\frac{M}{T^{2}}) (\log X)^{202.5}.$$
From the conditions in Lemma \ref{Lemma 4.1}  and \eqref{Typed1}, we have $$ X^{\frac{1}{2}-2\epsilon}\leq T \leq \frac{1}{q_{1}Q^{\frac{1}{2}}}X,\;  \frac{1}{2}X^{-\frac{1}{2}-\epsilon}\leq \gamma' \leq \frac{1}{q_{1}Q},\; N\leq X^{\epsilon^{2}},\; \rho' X \leq \frac{T}{\gamma'} \leq \rho^{'-1}X.$$ Therefore, the left-hand side of \eqref{4cd} is bounded by 
\begin{equation}\begin{split} &\ll_{\epsilon} q_{1}(q_{1}T+\frac{TN}{\gamma' H})(1+\frac{q_{1}}{T}+\frac{M}{T^{2}})\gamma'^{2} H^{2}  (\log X)^{275.5} \\&\ll_{\epsilon} q_{1}\gamma'^{2}H^{2} (\log X)^{275.5} (q_{1}T+ q_{1}^{2}+q_{1}\frac{M}{T}+\frac{TN}{\gamma' H}+ \frac{q_{1}N}{\gamma' H}+\frac{X}{\gamma' HT})
\\&\ll_{\epsilon} q_{1}\gamma'^{2}H^{2} \frac{X}{(\log X)^{-275.5+\frac{B}{2}}}.\end{split} \nonumber \end{equation} 
\end{proof}
\begin{Rem} For Type $\rm{d_{2}}$ cases (see \cite[Proposition 6,1]{MRT1}), the dummy factor $1_{\{1\}}$ is replaced by other factors. In the proof of  \cite[Proposition 6,1]{MRT1}), Both the Type $\rm{d_{1}}$ case and the Type $\rm{d_{2}}$ case are treated by some fourth moment estimates, and the authors unified the proof of the Type $\rm{d_{1}}$ case into the Type $\rm{d_{2}}$ case by adding the dummy factor to the Type $\rm{d_{1}}$ case. In general, having additional factors gives us more flexibility. For example, see \cite[Lemma A.1]{MRT1}).
\end{Rem}

\subsection{Bounding the first term of \eqref{longintegral}}
\begin{Lemma}\label{Lemma 4.5} Let $(a,q)=1,$ and let $\gamma, \rho'$ be real numbers such that \\
$\gamma \in [\frac{1}{2}X^{-\frac{1}{2}-\epsilon},$ $\frac{1}{qQ}], \rho'=Q^{-\frac{1}{2}}.$ Let 
\begin{equation}
I_{\gamma,\rho'}:=  \{ t \in \mathbb{R}: \rho'|\gamma|X \leq |t| \leq \frac{|\gamma|X}{\rho'} \}.
\end{equation}
Then 
 \begin{equation}\begin{split} \frac{d_{2}(q)^{4}}{q|\gamma|^{2}H^{2}}& \sup_{q=q_{0}q_{1}} \int_{ I_{\gamma,\rho'}} \Big(\sum_{\chi(\mathrm{mod} \thinspace q_{1})}\int_{t-|\gamma|H}^{t+|\gamma|H} \big|\sum_{\frac{X}{q_{0}} \leq n \leq \frac{2X}{q_{0}}} \frac{b(q_{0}n)\chi(n)}{n^{\frac{1}{2}+it'}}\big| dt'\Big)^{2}dt \\&\ll_{\epsilon,B} X(\log X)^{-\frac{B}{3}+1}.\end{split}\end{equation} 
\end{Lemma}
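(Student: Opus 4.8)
The plan is to reduce the quantity on the left-hand side of Lemma \ref{Lemma 4.5} to a finite sum of integrals of the type controlled by the Combinatorial decomposition (the preceding lemma), and then sum the resulting bounds. The function $b(n)=\sum_{m\mid n,\,1\le m<HX^{-\epsilon^2}}\lambda_f(m^2)$ is a $3$-divisor bounded function by the Deligne bound, and more importantly it is built as a Dirichlet convolution $b=\lambda_f(\cdot^2)1_{[1,HX^{-\epsilon^2})}\ast 1$. The first step is therefore to perform a dyadic decomposition of the Dirichlet polynomial $\sum_{X/q_0\le n\le 2X/q_0} b(q_0 n)\chi(n) n^{-1/2-it'}$ so that each resulting piece can be recognized, after extracting the factor $q_0$, as a Type $\mathrm{d_1}$ or Type $\mathrm{\RomanNumeralCaps 2}$ sum in the sense of the Combinatorial decomposition lemma. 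Concretely, I would write $b(q_0 n)$ as a convolution, split the two convolution variables into dyadic ranges $[N,2N]$ and $[M,2M]$, and classify each range: small first factor ($N\ll X^{\epsilon^2}$ paired with a long smooth factor $M\gg X^{-\epsilon^2}H$) gives Type $\mathrm{d_1}$, while the intermediate range $X^{\epsilon^2}\ll N\ll X^{-\epsilon^2}H$ gives Type $\mathrm{\RomanNumeralCaps 2}$.

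**Key steps in order.**
First I would set $\gamma'=|\gamma|$, $q_1$ as in the factorization $q=q_0 q_1$, and verify that the hypotheses of the Combinatorial decomposition are met: the range $\gamma\in[\tfrac12 X^{-1/2-\epsilon},\tfrac{1}{qQ}]$ gives exactly $\tfrac12 X^{-1/2-\epsilon}\le\gamma'\le\tfrac{1}{q_1 Q}$, and $H>X^{2/3+3\epsilon}$ holds by hypothesis (with $\epsilon$ suitably shrunk). Second, I would apply the elementary inequality $\big(\sum_j |D_j|\big)^2\ll (\log X)^{O(1)}\sum_j |D_j|^2$ (finitely many dyadic pieces, the number being $O((\log X)^2)$) to move the dyadic sum outside the square inside the integral, so that the displayed integral is bounded by $(\log X)^{O(1)}$ times a sum over dyadic pairs $(N,M)$ of the corresponding integrals \eqref{4cd}. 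Third, after renaming $g(n)=b(q_0 n)$ restricted to the dyadic box, each such integral is precisely of the form bounded by the Combinatorial decomposition, yielding each term is $\ll_{\epsilon,B} q_1 |\gamma|^2 H^2 X(\log X)^{-B/2.5}$. Fourth, I would multiply by the prefactor $\frac{d_2(q)^4}{q|\gamma|^2 H^2}$: since $d_2(q)^4\ll_\epsilon q^{\epsilon}$ and $q_1\le q$, the product $\frac{d_2(q)^4 q_1}{q}\ll_\epsilon q^{\epsilon}\ll (\log X)^{B\epsilon}$ is absorbed into a fixed power of $\log X$, and summing over the $O((\log X)^2)$ dyadic pieces costs only another $(\log X)^{O(1)}$ factor. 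Collecting powers of $\log X$ gives the final bound $\ll_{\epsilon,B} X(\log X)^{-B/3+1}$ once $B$ is large.

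**Main obstacle.**
The delicate point is the classification and bookkeeping in the dyadic decomposition: one must check that \emph{every} dyadic pair $(N,M)$ arising from $b(q_0\,\cdot)$ falls into one of the two admissible types, with no leftover "diagonal" range escaping both $N\ll X^{\epsilon^2}$ and $X^{\epsilon^2}\ll N\ll X^{-\epsilon^2}H$. This is exactly where the truncation $m<HX^{-\epsilon^2}$ in the definition of $b$ is essential: it guarantees the $\lambda_f(m^2)$-factor is supported on $N\ll X^{-\epsilon^2}H$, so the long factor $M\gg X^{-\epsilon^2}H$ forces the small-$N$ piece into Type $\mathrm{d_1}$ and the complementary range into Type $\mathrm{\RomanNumeralCaps 2}$, with the constraint $\frac{X}{Q}\ll NM\ll_\epsilon X$ following from the outer summation range $X/q_0\le n\le 2X/q_0$. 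The remaining work is routine: tracking the exponents of $\log X$ through the Cauchy--Schwarz splitting and confirming that the prefactor $\frac{d_2(q)^4}{q}$ combined with the $q_1$ from \eqref{4cd} contributes at most $(\log X)^{O(\epsilon B)}$, so that the saving $(\log X)^{-B/2.5}$ dominates and yields $(\log X)^{-B/3+1}$ for large $B$.
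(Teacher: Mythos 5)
Your overall strategy --- dyadically decompose $b$ into Type $\rm{d_{1}}$ and Type $\mathrm{II}$ convolutions, feed each piece into the Combinatorial decomposition lemma, and absorb the prefactor $d_{2}(q)^{4}q_{1}/q$ together with the $O(\log X)$ dyadic pieces into the final power of $\log X$ --- is exactly the paper's, and your verification of the hypotheses on $\gamma'$ and the closing bookkeeping are sound. However, there is a genuine gap at the step where you claim that ``after renaming $g(n)=b(q_{0}n)$ restricted to the dyadic box, each such integral is precisely of the form bounded by the Combinatorial decomposition.'' The function $n \mapsto b(q_{0}n)$ is \emph{not} of the required form: the identity $b=\lambda_{f}(\cdot^{2})1_{[1,HX^{-\epsilon^{2}})}\ast 1$ expresses $b(q_{0}n)$ through the divisors of $q_{0}n$, not of $n$, so $b(q_{0}\cdot)$ restricted to a dyadic box is not a Dirichlet convolution $\varrho\ast\varsigma$ of functions of $n$ satisfying the Type $\rm{d_{1}}$/Type $\mathrm{II}$ support conditions unless $q_{0}=1$. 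You flag this issue at the outset (``after extracting the factor $q_{0}$'') but never carry out the extraction, and that extraction is precisely where the paper does real work beyond quoting the Combinatorial decomposition.

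Concretely, the paper splits $\sum_{n} b(q_{0}n)\chi(n)n^{-\frac{1}{2}-it}$ according to $d=(n,q_{0})$, uses the multiplicativity of $\lambda_{f}(\cdot^{2})$ (first for square-free $q_{0}$, then in general) to factor the convolution value at $q_{0}n$ into a coefficient depending on $dq_{0}$ --- bounded by $d_{2}(q_{0})^{3}$ via the Deligne and divisor bounds --- times a genuine convolution evaluated at $k=n/d$ subject to the coprimality constraint $(k,q_{0}/d)=1$; it then removes that constraint by twisting $\chi$ with the principal character modulo $q_{0}/d$, so that the Combinatorial decomposition is applied with conductor $q/d\leq Q$ in place of $q_{1}$. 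This produces the extra factors $d_{2}(q_{0})^{6}$, a linear combination of $O(d_{2}(q_{0})\log X)$ terms, and the $q/d$ in the resulting bound, all of which must then be absorbed into $X(\log X)^{-\frac{B}{3}+1}$. Without this gcd-decomposition and character-twist argument (or some substitute for it), the reduction of the left-hand side of the lemma to integrals of the form \eqref{4cd} does not go through, so your proof is incomplete at its central point.
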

\begin{proof}

For simplicity, we use $b$ to denote the arithmetic function $n \rightarrow b(n)$ and  $\lambda_{f,2}$ to denote the arithmetic function $n \rightarrow \lambda_{f}(n^{2}).$ Consider $b1_{(\frac{X}{2},2X]}\thinspace$ instead of $\thinspace b1_{(X,2X]}.$
$b1_{(\frac{X}{2},2X]}$ can be written as the following sum
\begin{equation}
\begin{split}
b1_{(\frac{X}{2},2X]}(q_{0}n)= &\sum_{0 \leq l \leq \log_{2}X}  \lambda_{f,2}1_{[2^{l},2^{l+1}]}\ast 1_{[\frac{X}{2^{l+1}},\frac{X}{2^{l}}]}(q_{0}n)
\\=&\sum_{0\leq l \leq \log_{2}(X^{-\epsilon^{2}}H)}  \lambda_{f,2}1_{[2^{l},2^{l+1}]} \ast 1_{[\frac{X}{2^{l+1}},\frac{X}{2^{l}}]}(q_{0}n).
\end{split}
\nonumber
\end{equation}
Let $l_{1}$ be the largest integer such that 
$2^{l_{1}} \leq X^{\epsilon^{2}}$, $l_{2}$ be the largest integer such that $2^{l_{2}} \leq X^{-\epsilon^{2}}H$.  Then \begin{description}
\item[\normalfont{For} $0\leq l \leq l_{1}$]$\lambda_{f,2}1_{[2^{l},2^{l+1}]}\ast 1_{[\frac{X}{2^{l+1}},\frac{X}{2^{l}}]}$ are Type $\rm{d_{1}}$ sums $\varrho \ast \varsigma$ such that $$\varrho= \lambda_{f,2}1_{[2^{l},2^{l+1}]},\; \varsigma= 1_{[\frac{X}{2^{l+1}},\frac{X}{2^{l}}]}.$$ 
\item[\normalfont{For} $ l_{1} < l \leq l_{2}$]  $\lambda_{f,2}1_{[2^{l},2^{l+1}]} \ast 1_{[\frac{X}{2^{l+1}},\frac{X}{2^{l}}]}$ are Type $\rm{\RomanNumeralCaps{2}}$ sums $\varrho \ast \varsigma$ such that $$\varrho= \lambda_{f,2}1_{[2^{l},2^{l+1}]},\; \varsigma= 1_{[\frac{X}{2^{l+1}},\frac{X}{2^{l}}]}.$$\end{description}
Notice that
\begin{equation}\label{4chilambda}\sum_{n=1}^{\infty} \frac{\chi(n)\lambda_{f,2}1_{[2^{l},2^{l+1}]}\ast  1_{[\frac{X}{2^{l+1}},\frac{X}{2^{l}}]}(q_{0}n)}{n^{\frac{1}{2}+it}}= \sum_{d|q_{0}}\sum_{(n,q_{0})=d} \frac{\chi(n)\lambda_{f,2}1_{[2^{l},2^{l+1}]}\ast  1_{[\frac{X}{2^{l+1}},\frac{X}{2^{l}}]}(q_{0}n)}{n^{\frac{1}{2}+it}}.
\end{equation}
For simplicity, assume $q_{0}$ is square-free. By using the multiplicativity of $\lambda_{f,2}$, the absolute value of \eqref{4chilambda} is bounded by 
\begin{equation}
\sum_{d|q_{0}} \Big|\lambda_{f,2}1_{[2^{l},2^{l+1}]}\ast  1_{[\frac{X}{2^{l+1}},\frac{X}{2^{l}}]}(dq_{0})\frac{\chi(d)}{d^{\frac{1}{2}+it}}\Big|\Big|\sum_{(k,\frac{q_{0}}{d})=1} \frac{\chi(k)\lambda_{f,2}1_{[2^{l},2^{l+1}]}\ast  1_{[\frac{X}{2^{l+1}},\frac{X}{2^{l}}]}(k)}{k^{\frac{1}{2}+it}}\Big|.   \end{equation}
Using the Deligne bound and the divisor bound, we have
\begin{equation}\Big|\lambda_{f,2}1_{[2^{l},2^{l+1}]}\ast  1_{[\frac{X}{2^{l+1}},\frac{X}{2^{l}}]}(dq_{0})\frac{\chi(d)}{d^{\frac{1}{2}+it}}\Big| \ll d_{2}(q_{0})^{3}. \end{equation}
Therefore, $\sum_{\frac{X}{q_{0}} \leq n \leq \frac{2X}{q_{0}}} \frac{b(q_{0}n)\chi(n)}{n^{\frac{1}{2}+it'}}$ is bounded by a linear combination (with coefficients of size $d_{2}(q_{0})^{3}$) of $O\big((\log_{2}X) d_{2}(q_{0})\big)$ absolute values of the following terms
\begin{equation}\label{4conditionk}\Big|\sum_{(k,\frac{q_{0}}{d})=1} \frac{\chi(k)\lambda_{f,2}1_{[2^{l},2^{l+1}]}\ast  1_{[\frac{X}{2^{l+1}},\frac{X}{2^{l}}]}(k)}{k^{\frac{1}{2}+it}}\Big|.\end{equation}
Let $\chi_{\frac{q_{0}}{d},0}$ be the principal character modulo $\frac{q_{0}}{d}.$ By replacing the character $\chi$ with $\chi_{\frac{q_{0}}{d},0}\:\chi,$ we can remove the condition on $k$ under the summation \eqref{4conditionk}. So we use the conductor $\frac{q}{d}$ instead of $q_{1}.$  
Applying Lemma \ref{Lemma 4.3}  on  $\chi_{\frac{q_{0}}{d},0}(k) \chi(k)\lambda_{f,2}1_{[2^{l},2^{l+1}]}\ast  1_{[\frac{X}{2^{l+1}},\frac{X}{2^{l}}]}(k),$ we see that 
\begin{equation}
d_{2}(q_{0})^{6} \int_{ I_{\gamma,\rho'}} \Big(\sum_{\chi (\mathrm{mod}\thinspace q_{1})} \int_{t-\gamma H}^{t+\gamma H} \Big|\sum_{k=1}^{\infty} \frac{\chi_{\frac{q_{0}}{d},0}(k)\chi(k)\lambda_{f,2}1_{[2^{l},2^{l+1}]}\ast  1_{[\frac{X}{2^{l+1}},\frac{X}{2^{l}}]}(k)}{k^{\frac{1}{2}+it}}\Big|dt'\Big)^{2}dt \nonumber\\ \end{equation} $$\ll_{\epsilon,B}d_{2}(q_{0})^{6}\frac{q}{d}|\gamma|^{2}H^{2}X(\log X)^{-\frac{B}{2.5}}.$$
In the same manner we can get the same upper bound when $q_{0}$ is not square-free.  
Hence, the first term in Lemma \ref{Lemma 4.3}  is bounded by $X(\log X)^{-\frac{B}{3}+1}.$ \end{proof}
\subsection{Bounding the second term of \eqref{longintegral}}
\begin{Lemma}\label{Lemma 4.6} Let $\gamma \in [\frac{1}{2}X^{-\frac{1}{2}-\epsilon}, \frac{1}{qQ}],$ and let $\rho'=Q^{-\frac{1}{2}}.$ Then
\begin{equation}\label{42}
\Big(\rho' +\frac{1}{|\gamma|H}\Big)^{2}\int_{X}^{2X} \big(\frac{1}{H} \sum_{x \leq n \leq x+H}|b(n)|\big)^{2}dx\ll_{B} \rho'^{2}X(\log X)^{33}.
\end{equation}
\end{Lemma}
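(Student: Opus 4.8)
The plan is to split the left-hand side into the scalar prefactor $\big(\rho'+\frac{1}{|\gamma|H}\big)^{2}$ and the integral, and to treat them separately, showing that the prefactor is $\ll \rho'^{2}$ and that the integral is $\ll_{B} X(\log X)^{33}$. For the prefactor I would use the two lower bounds that are available: $|\gamma|\geq \frac{1}{2}X^{-\frac{1}{2}-\epsilon}$ gives $\frac{1}{|\gamma|}\leq 2X^{\frac{1}{2}+\epsilon}$, and the hypothesis $H\geq X^{\frac{2}{3}+\epsilon}$ then forces
\[
\frac{1}{|\gamma|H}\leq \frac{2X^{\frac{1}{2}+\epsilon}}{X^{\frac{2}{3}+\epsilon}}=2X^{-\frac{1}{6}}.
\]
Since $\rho'=Q^{-\frac{1}{2}}=(\log X)^{-\frac{B}{2}}$ decays only like a power of $\log X$, the term $\frac{1}{|\gamma|H}$ is negligible compared with $\rho'$, so $\rho'+\frac{1}{|\gamma|H}=\rho'\big(1+o(1)\big)$ and hence $\big(\rho'+\frac{1}{|\gamma|H}\big)^{2}\ll \rho'^{2}$. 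It then remains to prove that $\int_{X}^{2X}\big(\frac{1}{H}\sum_{x\leq n\leq x+H}|b(n)|\big)^{2}dx\ll_{B}X(\log X)^{33}$.

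For the integral I would first apply the Cauchy--Schwarz inequality to the inner sum, using that $[x,x+H]$ contains $\ll H$ integers, to get
\[
\Big(\frac{1}{H}\sum_{x\leq n\leq x+H}|b(n)|\Big)^{2}\ll \frac{1}{H}\sum_{x\leq n\leq x+H}|b(n)|^{2}.
\]
Integrating in $x$ and interchanging summation and integration, the $x$-integral of $1_{[x,x+H]}(n)$ equals the measure of $\{x\in[X,2X]:\, n-H\leq x\leq n\}$, which is $\leq H$ and vanishes unless $X\leq n\leq 2X+H$. The factor $H$ cancels the $\frac{1}{H}$, so the whole integral collapses to the global mean square
\[
\int_{X}^{2X}\Big(\frac{1}{H}\sum_{x\leq n\leq x+H}|b(n)|\Big)^{2}dx\ll \sum_{X\leq n\leq 2X+H}|b(n)|^{2},
\]
reducing the problem to a second-moment estimate for $b$.

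To bound $\sum_{X\leq n\leq 2X+H}|b(n)|^{2}$ with the correct power of $\log X$, I would avoid the crude bound $|b(n)|\leq d_{2}(n)^{3}$ (which only yields $(\log X)^{63}$) and instead apply Cauchy--Schwarz a second time, now at the level of divisors. Since $b(n)=\sum_{m|n,\,m<HX^{-\epsilon^{2}}}\lambda_{f}(m^{2})$ is a sum of at most $d_{2}(n)$ terms,
\[
|b(n)|^{2}\leq d_{2}(n)\sum_{m|n}\lambda_{f}(m^{2})^{2}\leq d_{2}(n)\sum_{m|n}d_{2}(m)^{4}=:g(n),
\]
where the second inequality uses the Deligne bound in the form $|\lambda_{f}(m^{2})|\leq d_{2}(m^{2})\leq d_{2}(m)^{2}$. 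The function $g$ is nonnegative and multiplicative, with $g(p)=d_{2}(p)\big(1+d_{2}(p)^{4}\big)=2\cdot 17=34$ and polynomially bounded prime-power values, so the standard mean-value bound for multiplicative functions (equivalently Shiu's theorem, as used in the proof of Lemma \ref{Lemma 1.7}) gives $\sum_{n\leq Y}g(n)\ll Y(\log Y)^{34-1}=Y(\log Y)^{33}$. Taking $Y\asymp X$ yields $\sum_{X\leq n\leq 2X+H}|b(n)|^{2}\ll X(\log X)^{33}$, completing the estimate.

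I do not expect a serious obstacle here, as the lemma is essentially routine; the two points requiring care are (i) the prefactor reduction, where one must verify that $\frac{1}{|\gamma|H}$ is a power of $X$ smaller than $\rho'$—this is precisely where the hypotheses $|\gamma|\geq \frac{1}{2}X^{-\frac{1}{2}-\epsilon}$ and $H\geq X^{\frac{2}{3}+\epsilon}$ enter—and (ii) the choice of the multiplicative majorant $g$, since it is the value $g(p)=34$ that pins the exponent to the claimed $33$, whereas a lazier majorant such as $d_{2}(n)^{6}$ would lose a factor of about $(\log X)^{30}$. The short-interval step itself is harmless because $H\geq X^{\frac{2}{3}+\epsilon}$ is far above any power-saving threshold, so one may equivalently apply Shiu's theorem to $g$ directly in each interval $[x,x+H]$ to get the uniform bound $\frac{1}{H}\sum_{x\leq n\leq x+H}g(n)\ll(\log X)^{33}$ and then integrate trivially over $[X,2X]$.
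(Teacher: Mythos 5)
Your proof is correct, and its skeleton matches the paper's: the prefactor is absorbed exactly as in the paper (via $\frac{1}{|\gamma|H}\leq 2X^{\frac{1}{2}+\epsilon}X^{-\frac{2}{3}-\epsilon}=2X^{-\frac{1}{6}}\ll\rho'$), and the reduction of the integral to the second moment $\sum_{X\leq n\leq 2X+O(H)}|b(n)|^{2}$ by Cauchy--Schwarz in the short interval, followed by a second Cauchy--Schwarz over divisors, is the same opening move.

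Where you genuinely diverge is the final mean-value estimate, and the comparison is instructive. The paper stays elementary: after the divisor-level Cauchy--Schwarz it writes $n=mk$, swaps the order of summation, uses $d_{2}(mk)\ll d_{2}(m)d_{2}(k)$ and $\sum_{k\leq Y}d_{2}(k)\ll Y\log Y$, and then a dyadic decomposition in $m$ together with the standard moment bound $\sum_{M\leq m\leq 2M}d_{2}(m)^{5}\ll M(\log M)^{31}$ (cf.\ \cite[(1.80)]{IK1}); its exponent $33$ thus arises as $2+31$. You instead package everything into the multiplicative majorant $g(n)=d_{2}(n)\sum_{m|n}d_{2}(m)^{4}$ with $g(p)=34$ and polynomially bounded prime-power values, and invoke Shiu's theorem (which the paper itself uses, but only in Lemma \ref{Lemma 1.7}), so your $33$ arises as $34-1$. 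That the two exponents agree is essentially a numerical accident: applying your multiplicative-function argument to the paper's own majorant $d_{2}(n)\sum_{m|n}d_{2}(m^{2})^{2}$, whose value at primes is $2(1+9)=20$, would even give $(\log X)^{19}$; conversely, the paper's cruder factorization loses a couple of logarithms. None of this matters for the lemma, since $B$ is taken large and any fixed power of $\log X$ is acceptable. Your route is shorter and more conceptual; the paper's is self-contained at the level of classical divisor-moment bounds and keeps Shiu's theorem out of Section 4. Your closing alternative --- applying Shiu directly on each interval $[x,x+H]$, which is legitimate because $H\geq X^{\frac{2}{3}+\epsilon}$ is far above the $x^{\beta}$ threshold in Shiu's theorem, and then integrating trivially --- is also valid and in fact bypasses the interchange step altogether.
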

\begin{proof}
By the Cauchy-Schwarz inequality, we have
\begin{equation}
\begin{split}
\int_{X}^{2X} \big(\frac{1}{H} \sum_{x \leq n \leq x+H}|b(n)|\big)^{2}dx&\ll\frac{1}{H} \int_{X}^{2X} \sum_{x \leq n \leq x+H}\big|\sum_{m|n}\lambda_{f}(m^{2}) 1_{[HX^{-\epsilon^{2}},2X]}(m)\big|^{2} dx
\\&\ll \sum_{X \leq n \leq 2X}\big|\sum_{m|n}\lambda_{f}(m^{2})1_{[HX^{-\epsilon^{2}},2X]}(m)\big|^{2}.
\end{split}
\nonumber
\end{equation}
By the Deligne bound, $\lambda_{f}(m^{2})\leq d_{2}(m^{2}).$ Using the inequality $d_{2}(mn) \ll d_{2}(m)d_{2}(n),$ 
\begin{equation}
\begin{split} 
\sum_{X \leq n \leq 2X}\big|\sum_{m|n}\lambda_{f}(m^{2})1_{[HX^{-\epsilon^{2}},2X]}\big|^{2} &\ll \sum_{X \leq n \leq 2X} d_{2}(n) (\sum_{m|n}d_{2}(m^{2})^{2}1_{[HX^{-\epsilon^{2}},2X]}(m))
\\& \ll \sum_{HX^{-\epsilon^{2}} \leq m \leq 2X } d_{2}(m^{2})^{2} d_{2}(m)\sum_{\frac{X}{m} \leq n \leq \frac{2X}{m}} d_{2}(n)
\\& \ll \sum_{HX^{-\epsilon^{2}} \leq m \leq 2X} d_{2}(m^{2})^{2} d_{2}(m) \frac{X}{m} \log X
\\& \ll X(\log X)^{2} \max_{HX^{-\epsilon^{2}}\leq M \leq 2X} \frac{\sum_{M \leq m \leq 2M} d_{2}(m)^{5}}{M}.
\end{split}\nonumber\end{equation}
Since \begin{equation}\frac{\sum_{M<m<2M} d_{2}(m)^{5}}{M} \ll (\log M)^{31}, \;\frac{1}{\gamma H} \leq 2X^{\frac{1}{2}+\epsilon}X^{-\frac{2}{3}-\epsilon}=2X^{-\frac{1}{6}}, \nonumber \end{equation}
\eqref{42} is bounded by $\rho'^{2}X(\log X)^{33}.$
\end{proof}
\begin{Proof-non3}
Recall that $\rho= (\log X)^{s}, \rho'=Q^{-\frac{1}{2}}=(\log X)^{-\frac{B}{2}}.$ Let $B>6k+3s+2020.$
Combining Lemma \ref{Lemma 4.2}, Lemma \ref{Lemma 4.5}  and Lemma \ref{Lemma 4.6} , we get 
\begin{equation} \begin{split}\int_{m_{2} \cap [\theta -\frac{1}{2H}, \theta+\frac{1}{2H}]}
 |S_{f}(\alpha)|^{2} d\alpha  &\ll_{f,B,s,k,\epsilon} X^{1-\epsilon}+ X(\log X)^{-\frac{B}{3}+1}+ (\rho')^{2}X (\log X)^{33}\\&\ll_{f,B,s,k,\epsilon} \rho X (\log X)^{-2k-1}.  \end{split} \nonumber \end{equation} \qed
\end{Proof-non3}
\begin{Rem}Applying the methods in Section 4, we can deduce Theorem \ref{Theorem 1.1} for $X^{\frac{5}{6}+\epsilon}<H<X^{1-\epsilon}.$     Precisely, one needs to replace the range of $\gamma \in [\frac{1}{2}X^{-\frac{1}{2}-\epsilon}, \frac{1}{q_{1}Q}]$ with $ \gamma \in  [X^{-\frac{5}{6}-\epsilon}, \frac{1}{q_{1}Q}],$ and one needs to replace the range of $H \in [X^{\frac{2}{3}+\epsilon},X^{1-\epsilon}]$ with $H \in [X^{\frac{5}{6}+\epsilon},X^{1-\epsilon}]$ in Lemma \ref{Lemma 4.3} .
\end{Rem}

\section{$m_{1}$-estimate}
\noindent In this section we will prove the  $m_{1}$-estimate in Proposition \ref{Propm}.
\begin{Prop}\label{Proposition 5.1} Let $\epsilon>0$ be sufficiently small. Let $1 \leq a \leq q \leq (\log X)^{B}$ for sufficiently large $B>0,$ and let $(a,q)=1.$  Then  
$$\int_{X^{-\frac{5}{6}-2\epsilon}}^{\frac{1}{2}X^{-\frac{1}{2}-\epsilon}} |S_{f}(\frac{a}{q}\pm \alpha)|^{2} d\alpha \ll_{f,B,\epsilon} X^{1-\frac{\epsilon}{2}}.$$
\end{Prop}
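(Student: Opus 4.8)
The plan is to extract cancellation in $S_f$ away from the rational $\frac{a}{q}$ by combining Parseval's identity with Gallagher's lemma, after first peeling off the zero-frequency (i.e.\ major-arc) contribution. First I would dyadically decompose the range $[X^{-\frac56-2\epsilon},\frac12 X^{-\frac12-\epsilon}]$ into $O(\log X)$ pieces $[\gamma,2\gamma]$, so that it suffices to bound each piece by $\ll X^{1-\epsilon}$. On a single piece I would split $S_f(\frac aq\pm\alpha)=M(\pm\alpha)+E(\pm\alpha)$, where $M(\alpha)=D_q\sum_{X\le n\le 2X}e(n\alpha)$ with $D_q=\sum_{q=q_0q_1}\frac{\mu(q_1)}{\phi(q_1)q_0}w_{f,q_0,q_1}$ the constant from \eqref{3dq}, and $E(\alpha)=\sum_{X\le n\le 2X}\big(\lambda_f(n)^2e(na/q)-D_q\big)e(n\alpha)$. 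The main-term piece is easy: since $M(\alpha)\ll|D_q|\min(X,|\alpha|^{-1})$ and $|D_q|\ll 1$ for fixed $q$, its total contribution to $\int_{m_1\text{-range}}|S_f|^2$ is $\ll|D_q|^2\int_{|\alpha|\ge X^{-5/6-2\epsilon}}\alpha^{-2}\,d\alpha\ll X^{\frac56+2\epsilon}\ll X^{1-\frac\epsilon2}$.

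The substance is the remainder $E$. Applying Gallagher's lemma to the coefficient sequence $b_n=\lambda_f(n)^2e(na/q)-D_q$ (supported on $[X,2X]$) gives $\int_\gamma^{2\gamma}|E(\pm\alpha)|^2\,d\alpha\le\int_{-2\gamma}^{2\gamma}|E|^2\ll\gamma^2\int_{\mathbb R}|R_L(x)|^2\,dx$, where $R_L(x)=\sum_{x<n\le x+L}b_n$ and $L\asymp\gamma^{-1}$ ranges over $[X^{\frac12+\epsilon},X^{\frac56+2\epsilon}]$. Here is where Lemma \ref{Lemma 3.1}, Lemma \ref{Lemma 3.4} and Lemma \ref{Lemma 3.7} enter: decomposing $e(na/q)$ into characters modulo $q_1$, only the principal character produces a main term, which is linear in the length by Lemma \ref{Lemma 3.4} and equals $D_q\cdot\#\{x<n\le x+L\}$ upon recombination, while every non-principal character contributes only $O_{f,\epsilon}(q^{4+\epsilon/2}X^{\frac34+\epsilon})$ by Lemma \ref{Lemma 3.7} (the Gauss-sum, divisor and $q^{4+\epsilon/2}$ factors being absorbed into $(\log X)^{O(B)}$ since $q\le Q$). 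Consequently the constant $D_q$ cancels the principal main term, giving the uniform pointwise bound $R_L(x)\ll(\log X)^{O(B)}X^{\frac34+\epsilon}$.

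For the larger scales, say $L\ge X^{\frac34+2\epsilon}$, this pointwise estimate already suffices, since $\gamma^2\int_X^{2X}|R_L|^2\,dx\ll\frac{(\log X)^{O(B)}X^{\frac52+2\epsilon}}{L^2}\ll X^{1-\frac\epsilon2}$. The hard part will be the complementary range $X^{\frac12+\epsilon}\le L\le X^{\frac34+2\epsilon}$, i.e.\ the junction with $m_2$ near $\alpha\approx X^{-\frac12-\epsilon}$: there the bound $X^{\frac34+\epsilon}$ exceeds the trivial bound $L(\log X)^{O(1)}$, so neither estimate closes the argument and one genuinely needs cancellation in the $x$-average. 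This is precisely the point where a mean-square bound for the Dirichlet series on the line $\Re(s)=\frac12$ is required: via Parseval (Plancherel) in $x$ one converts $\int_{\mathbb R}|R_L(x)|^2\,dx$ into an integral of $|\mathcal D(\tfrac12+it)|^2$ against a Fej\'er-type kernel of mass $\asymp L^2$, where $\mathcal D(s)=\sum_n b_n n^{-s}$; its behaviour on the critical line is controlled, through Lemma \ref{Lemma 3.1} and the Rankin--Selberg factorization $A_1(s)=\zeta(s)L(\mathrm{sym}^2 f,s)/\zeta(2s)$ underlying Remark \ref{Remark 3.5}, by the character-twisted second moment of $\zeta(s)L(\mathrm{sym}^2 f,s)$. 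Feeding the Montgomery--Vaughan mean value theorem for the length-$X$ polynomial together with the savings of Lemmas \ref{Lemma 3.4}, \ref{Lemma 3.7} should yield $\int_X^{2X}|R_L(x)|^2\,dx\ll L^2X^{1-\epsilon}$, hence $\gamma^2\int|R_L|^2\ll X^{1-\epsilon}$ on each dyadic piece. Summing the $O(\log X)$ pieces and adding the $M$-contribution gives the claimed $X^{1-\frac\epsilon2}$. The delicate issues I anticipate are matching the additive-frequency $L^2$-mean produced by Gallagher's lemma to the multiplicative critical-line mean square once the zero-frequency term has been removed, and securing the (small) uniform power saving all the way down to $L\approx X^{1/2}$.
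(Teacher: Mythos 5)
Your overall architecture --- Gallagher's lemma to pass to short\--window mean squares, removal of the main term, then a Parseval/Plancherel conversion to critical-line mean squares of the associated Dirichlet series --- is the same as the paper's Section 5, just organized differently: instead of a dyadic decomposition plus coefficient-level subtraction of $D_{q}$, the paper compares the two Fej\'er kernels $K_{-d},K_{-c}$ at the extreme scales $X^{\frac12+\epsilon}$ and $X^{\frac56+2\epsilon}$, so that the residue (main) terms cancel in the difference $\int_{X}^{2X}\big(|E_{d}(x)|^{2}-|E_{c}(x)|^{2}\big)dx$ (Proposition \ref{Proposition 5.6}); your variant would avoid the paper's measure estimates on the sets where $K_{-d}-K_{-c}<0$, which is a genuine tidying. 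Your main-term contribution, your pointwise bound $R_{L}(x)\ll (\log X)^{O(B)}X^{\frac34+\epsilon}$ (this is exactly Remark \ref{Remark 3.9}), and your treatment of the scales $L\geq X^{\frac34+2\epsilon}$ are all correct.

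The gap is precisely the step you flag as ``delicate,'' and it is not a routine verification: for $X^{\frac12+\epsilon}\leq L\leq X^{\frac34+2\epsilon}$ the bound $\int_{X}^{2X}|R_{L}(x)|^{2}dx\ll L^{2}X^{1-\epsilon}$ does \emph{not} follow from ``the Montgomery--Vaughan mean value theorem for the length-$X$ polynomial together with the savings of Lemmas \ref{Lemma 3.4}, \ref{Lemma 3.7}.'' The mean value theorem applied to a Dirichlet polynomial of length $X$ produces only the diagonal term $\asymp (T+X)X^{o(1)}$, i.e.\ no cancellation beyond the trivial one; and Lemmas \ref{Lemma 3.4}, \ref{Lemma 3.7} are pointwise, convexity-strength estimates, which inserted into the mean square give only $\int_{X}^{2X}|R_{L}(x)|^{2}dx\ll X\cdot X^{\frac32+2\epsilon}$ --- enough exactly when $L\geq X^{\frac34+\epsilon}$, the range you have already handled, and insufficient below it. What is actually needed, and what the paper supplies as Lemma \ref{Lemma 5.3} and Remark \ref{Remark 5.4} (then exploited through Perron's formula and Mellin--Parseval in Lemma \ref{Lemma 5.5}), is a \emph{power-saving} second moment on the critical line, $\int_{T}^{2T}\big|D[\lambda_{f}^{2}](\tfrac12+it)\big|^{2}dt\ll_{f,\epsilon} T^{\frac{11}{6}+\epsilon}$, uniformly over the relevant character twists. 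The exponent $\frac{11}{6}<2$ is the crux: in the Mellin--Parseval identity the integrand carries the weight $|s|^{-2}$, so any bound of the shape $T^{2+o(1)}$ (which is all that MV or convexity can give) leaves the $t$-integral divergent, whereas $T^{2-\delta_{0}}$ makes it converge and yields $\int_{X}^{2X}|\triangle(x)|^{2}dx\ll X^{2+o(1)}$, i.e.\ square-root cancellation on average in the prefix sums --- which is what produces the saving all the way down to $L\approx X^{\frac12+\epsilon}$. The paper obtains $\frac{11}{6}$ by factoring $D[\lambda_{f}^{2}](s)=\zeta(s)L_{f}^{(2)}(s)/\zeta(2s)$, pulling out the Weyl subconvexity bound $\zeta(\tfrac12+it)\ll_{\epsilon}|t|^{\frac16+\epsilon}$ together with $1/\zeta(1+2it)\ll(\log t)^{7}$, and only then applying the approximate functional equation --- which shortens $L_{f}^{(2)}(\tfrac12+it)$ to a polynomial of length $t^{\frac32+\frac{\epsilon}{2}}$, not $X$ --- and the mean value theorem to that shortened polynomial. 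Without this subconvex input (or an equivalent), your argument stalls exactly at the junction $L\approx X^{\frac12+\epsilon}$ that you identify.
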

\begin{Proof-non2}
It follows from Proposition \ref{Proposition 5.1} that 
\begin{equation} \begin{split}\int_{m_{1}\cap [\theta-\frac{1}{2H}, \theta+\frac{1}{2H}]} |S_{f}(\alpha)|^{2}d\alpha &\ll \sum_{q=1}^{(\log X)^{B}} \sum_{1\leq a \leq q \atop(a,q)=1} \int_{X^{-\frac{5}{6}-2\epsilon}}^{\frac{1}{2}X^{-\frac{1}{2}-\epsilon}} |S_{f}(\frac{a}{q}\pm \beta)|^{2}d\beta \\&\ll_{f,B,\epsilon} (\log X)^{2B} X^{1-\frac{\epsilon}{2}}.  \end{split} \nonumber \end{equation}
By taking an arbitrarily small $\epsilon,$ the above term is bounded by
$O_{f,B,s,k,\epsilon} (\rho X (\log X)^{-2k-1}).$
\qed 
\end{Proof-non2} 
\noindent Let us briefly sketch the proof of Proposition \ref{Proposition 5.1}. To prove Proposition \ref{Proposition 5.1}, it is sufficient to show that   $$\int_{X^{-\frac{5}{6}-2\epsilon}} ^{\frac{1}{2}X^{-\frac{1}{2}-\epsilon}}\big|S_{f}(\frac{a}{q}\pm \alpha)\big|^{2}\big(\big|\frac{\sin\pi X^{\frac{1}{2}+\epsilon} \alpha}{\pi X^{\frac{1}{2}+\epsilon} \alpha}\big|^{2}-\big|\frac{\sin \pi X^{\frac{5}{6}+2\epsilon} \alpha}{\pi X^{\frac{5}{6}+2\epsilon} \alpha}\big|^{2} \big)d\alpha \ll_{f,B,\epsilon} X^{1-\frac{\epsilon}{2}} $$ instead of the original integral, since $$  \big|\frac{\sin\pi X^{\frac{1}{2}+\epsilon} \alpha}{\pi X^{\frac{1}{2}+\epsilon} \alpha}\big|^{2}-\big|\frac{\sin\pi X^{\frac{5}{6}+2\epsilon} \alpha}{\pi X^{\frac{5}{6}+2\epsilon} \alpha}\big|^{2} \geq \frac{1}{4}-\frac{1}{\pi^{2}}$$ for $\alpha \in [X^{-\frac{5}{6}-2\epsilon}, \frac{1}{2}X^{-\frac{1}{2}-\epsilon}]$ (see Remark \ref{Remark 5.2}).     
By the arguments in the proof of Gallagher's Lemma (see \cite[Lemma 1.9]{Montgomery1}), we have 
\begin{equation} \label{Gallagher} \begin{split}
\int_{-\infty}^{\infty} |S_{f}(\frac{a}{q}+\alpha)|^{2}\big|\frac{\sin\pi X^{\delta} \alpha}{\pi X^{\delta} \alpha}\big|^{2} d\alpha &= \int_{-\infty}^{\infty} \big|\frac{1}{X^{\delta}} \sum_{x \leq n \leq x +X^{\delta} \atop X \leq n\leq 2X}\lambda_{f}(n)^{2}e(\frac{a}{q}n)\big|^{2}dx\\&= \int_{X}^{2X}  \big|\frac{1}{X^{\delta}} \sum_{x \leq n \leq x +X^{\delta} \atop X \leq n \leq 2X} \lambda_{f}(n)^{2}e(\frac{a}{q}n)\big|^{2}dx \\&+ O_{f,B,\epsilon}(X^{\delta+\epsilon})
\end{split} \end{equation} for $\delta \in \{\frac{5}{6}+2\epsilon, \frac{1}{2}+\epsilon\}.$ In Lemma \ref{Lemma 5.5} we apply Parseval's identity to give the following asymptotic estimate
$$\int_{X}^{2X} \big|\frac{1}{X^{\delta}} \sum_{x \leq n \leq x +X^{\delta} \atop X \leq n\leq 2X}\lambda_{f}(n)^{2}e(\frac{a}{q}n)\big|^{2}dx  \sim \Big| \sum_{q=q_{0}q_{1}} \frac{\mu(q_{1})}{q_{0}\phi(q_{1})}\textrm{Res}_{s=1}\big(\frac{D[\lambda_{f}^{2}](s,q_{0};q_{1})}{s}\big)\Big|^{2}.$$  In Proposition \ref{Proposition 5.6} we show that
\begin{equation}\begin{split}&\int_{X}^{2X}  \big|\frac{1}{X^{\frac{1}{2}+\epsilon}} \sum_{x \leq n \leq x +X^{\frac{1}{2}+\epsilon} \atop X \leq n\leq 2X}\lambda_{f}(n)^{2}e(\frac{a}{q}n)\big|^{2} -\big|\frac{1}{X^{\frac{5}{6}+2\epsilon}} \sum_{x \leq n \leq x +X^{\frac{5}{6}+2\epsilon}\atop X \leq n\leq 2X}\lambda_{f}(n)^{2}e(\frac{a}{q}n)\big|^{2} dx\\&=O_{f,B,\epsilon}(X^{1-\frac{\epsilon}{2}}).\end{split}\end{equation} 
\subsection{Notation} From now on we take $c=\frac{5}{6}+2\epsilon,$ $d=\frac{1}{2}+\epsilon$ for some fixed small $\epsilon>0.$ We fix $a,q$ such that $(a,q)=1,$ $1 \leq a \leq q \leq Q=(\log X)^{B}$ for a sufficiently large constant $B>0.$ Let \begin{align} 
&K_{-\delta}(t):=|\frac{\sin\pi X^{\delta} t}{\pi X^{\delta} t}|^{2},\\\nonumber
&E_{\delta}(x):=\frac{1}{X^{\delta}} \sum_{x \leq n \leq x +X^{\delta}}\lambda_{f}(n)^{2}e(\frac{a}{q}n),\\\nonumber
&I_{1}:=[0,\frac{1}{\pi}X^{-c}],\\\nonumber
&J_{1}:=[\frac{1}{\pi}X^{-c}, X^{-c}]\cap \{ \alpha : K_{-d}(\alpha) < K_{-c}(\alpha)\}, \\\nonumber
&I_{2}:=[X^{-c},\frac{1}{2}X^{-d}],\\\nonumber
&I_{3}:=[\frac{1}{2}X^{-d},X^{\epsilon}],\\\nonumber
&J_{3}:=I_{3} \cap \{ \alpha : K_{-d}(\alpha) < K_{-c}(\alpha)\},\\\nonumber 
&I_{4}:=[X^{\epsilon}, \infty). 
\nonumber
\end{align}
\noindent When $P$ is a set, we denote by $\text{-}P$ the set 
$$\{ -x : x \in P \}.$$

\begin{Rem}\label{Remark 5.2} 
Since $\big|\frac{\sin t}{t}\big| \leq |\frac{1}{t}\big|$ for non-zero $t,$ we have $K_{-c}(\alpha) \leq \frac{1}{\pi^{2}}$ for  $\alpha \in [X^{-c}, \frac{1}{2}X^{-d}].$ By the Taylor  expansion of $\sin t$ about $0$, we have $$ \Big|\sin t - t + \frac{t^{3}}{6} \Big| \leq \frac{|t|^{5}}{120}$$ for $|t| \leq \frac{\pi}{2}.$ Therefore, for  $\alpha \in [X^{-c}, \frac{1}{2}X^{-d}],$ we see that  \begin{equation} \begin{split} K_{-d}(\alpha) &\geq \Big|\frac{\pi X^{d} \alpha - \frac{(\pi X^{d} \alpha)^{3}}{6} -\frac{(\pi X^{d} \alpha)^{5}}{120}}{\pi X^{d}\alpha}\Big|^{2}\\& \geq \Big(1- \frac{(\pi X^{d} \alpha)^{2}}{6} -\frac{(\pi X^{d} \alpha)^{4}}{120}\Big)^{2} \\& \geq \Big (1-\frac{\pi^{2}}{24} - \frac{\pi^{4}}{1920} \Big)^{2} \\& \geq \frac{1}{4}.\end{split} \nonumber \end{equation}
Combining the above inequalities,
$$K_{-d}(\alpha)-K_{-c}(\alpha) \geq \frac{1}{4}-\frac{1}{\pi^{2}}$$ for $\alpha \in [X^{-c}, \frac{1}{2}X^{-d}].$ Since $|K_{-d}(\alpha)|, |K_{-c}(\alpha)| \ll 1,$ we have
$$|K_{-d}(\alpha)-K_{-c}(\alpha)| \ll 1$$ for any $\alpha \in \mathbb{R}.$
\end{Rem}
\subsection{The asymptotic estimation of (5.1)}
In this subsection we use $\lambda_{f}^{2}$ to denote the function $n \rightarrow \lambda_{f}(n)^{2}.$ 
\begin{Lemma}\label{Lemma 5.3}
Let $T \geq 0.$ Then
 \begin{equation}\label{52} \int_{T}^{2T} \big|D[\lambda_{f}^{2}](\frac{1}{2}+it)\big|^{2}dt \ll_{f,\epsilon}  (1+T)^{\frac{11}{6}+\epsilon}.\end{equation}
\end{Lemma}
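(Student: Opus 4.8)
The plan is to exploit Hecke multiplicativity to factor $D[\lambda_f^2]$ as the Riemann zeta function times a degree-three Dirichlet series, and then to combine a subconvex pointwise bound for $\zeta$ with a mean-square bound for the degree-three factor. Since $\lambda_f(n)^2=\sum_{d\mid n}\lambda_f(d^2)$, one has for $\Re(s)>1$ the identity
\[
D[\lambda_f^2](s)=\zeta(s)\,D[\lambda_{f,2}](s),
\]
which continues to $\Re(s)=\tfrac12$. Here $D[\lambda_{f,2}](s)=\sum_n\lambda_f(n^2)n^{-s}=L(\mathrm{sym}^{2}f,s)/\zeta(2s)$ is, up to the harmless factor $\zeta(2s)^{-1}$, the symmetric-square $L$-function, which is entire of degree three by Gelbart--Jacquet. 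On the line $\Re(s)=\tfrac12$ with $t\in[T,3T]$ the factor $\zeta(2s)^{-1}=\zeta(1+2it)^{-1}$ is holomorphic and $\ll\log T$ (using $|\zeta(1+2it)|\gg(\log T)^{-1}$), so it costs only a power of $\log T$.

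First I would invoke the classical Weyl/van der Corput subconvexity bound $\zeta(\tfrac12+it)\ll(1+|t|)^{1/6+\epsilon}$, giving $|\zeta(\tfrac12+it)|^2\ll T^{1/3+\epsilon}$ uniformly for $t\in[T,3T]$. Pulling this pointwise bound out of the integral yields
\[
\int_T^{3T}\big|D[\lambda_f^2](\tfrac12+it)\big|^2\,dt\ll T^{1/3+\epsilon}\int_T^{3T}\big|D[\lambda_{f,2}](\tfrac12+it)\big|^2\,dt.
\]
It then suffices to prove the degree-three mean-square bound $\int_T^{3T}|D[\lambda_{f,2}](\tfrac12+it)|^2\,dt\ll T^{3/2+\epsilon}$; multiplying the two exponents gives $T^{1/3+3/2+\epsilon}=T^{11/6+\epsilon}$, exactly the claim.

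For the degree-three mean square I would use the approximate functional equation for $L(\mathrm{sym}^{2}f,s)$, whose analytic conductor is $\asymp(1+|t|)^3$, to write $L(\mathrm{sym}^{2}f,\tfrac12+it)$ as a smoothed Dirichlet polynomial of length $\ll t^{3/2}$ plus a dual sum of the same length. Applying the Montgomery--Vaughan mean value theorem to each dyadic block $n\asymp N\le T^{3/2}$ produces a contribution
\[
\ll(T+N)\sum_{n\le N}\frac{|\lambda_{\mathrm{sym}^{2}f}(n)|^2}{n}\ll T^{3/2}(\log T)^{O(1)},
\]
since $T^{3/2}$ dominates $T$ and $\sum_{n\le N}|\lambda_{\mathrm{sym}^{2}f}(n)|^2/n\ll(\log N)^{O(1)}$ by the Rankin--Selberg estimate for $\mathrm{sym}^{2}f$ (equivalently Fomenko's bound on $\sum\lambda_f(n)^4$ after the Hecke identity). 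Reinstating the $\zeta(2s)^{-1}$ multiplier, which is $\ll\log T$ on the range of $t$ in question, then gives the desired $T^{3/2+\epsilon}$ for $D[\lambda_{f,2}]$.

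The main obstacle is the degree-three mean-square estimate: one must set up the approximate functional equation for $L(\mathrm{sym}^{2}f,s)$ correctly (its gamma factors, the functional equation, and the truncation length $t^{3/2}$) and check that Montgomery--Vaughan applies uniformly across the dyadic decomposition, with the $t$-dependence of the truncation length controlled on $[T,3T]$. I emphasize that the factorization is essential: applying the approximate functional equation directly to the degree-four series $D[\lambda_f^2]$ (conductor $t^4$, length $t^2$) together with Fomenko's bound would only yield $T^{2+\epsilon}$, and the improvement to $T^{11/6+\epsilon}$ comes precisely from peeling off $\zeta$ and exploiting its subconvexity.
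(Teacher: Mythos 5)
Your proposal is correct and follows essentially the same route as the paper: both factor $D[\lambda_f^2](s)$ via the Hecke relation into $\zeta(s)$ times (up to the harmless $\zeta(2s)^{-1}$) the degree-three symmetric-square $L$-function, pull out the Weyl bound $\zeta(\tfrac12+it)\ll t^{1/6+\epsilon}$ pointwise, and bound the remaining factor in mean square by $T^{3/2+\epsilon}$ using its approximate functional equation (length $\asymp t^{3/2}$) together with the Montgomery--Vaughan/Dirichlet mean value theorem. The only cosmetic difference is that the paper controls the coefficient sum via the Deligne bound $|a_f^{(2)}(n)|\ll d_2(n)^3$ and the majorant principle, where you invoke Rankin--Selberg for $\mathrm{sym}^2 f$; both are standard and sufficient.
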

\begin{proof}
Let $$L_{f}^{(2)}(s):= \sum_{n=1}^{\infty} \big(\sum_{ml^{2}=n} \lambda_{f}(m^{2})\big)n^{-s}$$ for $\Re(s)>1$ and its analytic continuation elsewhere.
It is known that $$D[\lambda_{f}^{2}](s)=\frac{\zeta(s)}{\zeta(2s)}L_{f}^{(2)}(s).$$ (see \cite[(13.58), (13.60)]{Iwaniecbook2}).
Therefore, we see that 
\begin{equation}\label{5integral1} \int_{T}^{2T} \big|D[\lambda_{f}^{2}](\frac{1}{2}+it)\big|^{2}dt\ll\max_{t \in [T,2T]}\big|\frac{\zeta(\frac{1}{2}+it)}{\zeta(1+2it)}\big|^{2} \int_{T}^{2T} \big| L_{f}^{(2)}(\frac{1}{2}+it)\big|^{2}dt.\end{equation}
It is known that 
$$|\zeta(\frac{1}{2}+it)|\ll_{\epsilon} (1+|t|)^{\frac{1}{6}+\epsilon},$$
$$|\frac{1}{\zeta(1+2it)}|\ll ( \log (1+|t|))^{7}$$
(see \cite[Theorem 5.12]{Titchmarsh1}, \cite[(3.6.3)]{Titchmarsh1} respectively).
Let $$\Lambda_{f}^{(2)}(s):=\pi^{-\frac{3s}{2}} \Gamma(\frac{s+1}{2}) \Gamma(\frac{s+\kappa-1}{2}) \Gamma(\frac{s+\kappa }{2})L_{f}^{(2)}(s).$$
Then $\Lambda_{f}^{(2)}(s)$ satisfies the functional equaton
$$\Lambda_{f}^{(2)}(s)=\Lambda_{f}^{(2)}(1-s),$$ and $L_{f}^{(2)}(s)$ is entire  (for the details, see \cite[13.8]{Iwaniecbook2}).
By the approximate functional equation in \cite[Theorem 5.3]{IK1}, 
\begin{equation}\label{originalfunctional}\begin{split}&L_{f}^{(2)}(\frac{1}{2}+it)\\&= \sum_{n}\frac{ \big( \sum_{ml^{2}=n} \lambda_{f}(m^{2}) \big)}{n^{\frac{1}{2}+it}} V_{\frac{1}{2}+it}( \frac{n}{\sqrt{q_{f}(t)}}) +\epsilon_{f} \sum_{n}\frac{\overline{\big(\sum_{ml^{2}=n} \lambda_{f}(m^{2})\big)}}{n^{\frac{1}{2}-it}} V_{\frac{1}{2}-it}(\frac{n}{\sqrt{q_{f}(t)}}) \end{split}\end{equation} 
where $q_{f}(t):=(|\frac{3}{2}+it|+3)(|\kappa+\frac{1}{2}+it|+3)(|\kappa-\frac{1}{2}+it|+3),$ a constant $\epsilon_{f}$ such that $|\epsilon_{f}|=1,$ and a function $V_{s}(y)$ such that
$V_{s}(y) \ll_{\upsilon} (1+\frac{y}{\sqrt{q_{f}(t)}})^{-\upsilon}$ for any $\upsilon>0$ (in this proof, we take  $\upsilon=\frac{10000}{\epsilon}$). 
Note that if $ t^{\frac{1}{2}}\geq \kappa,$ then $\sqrt{q_{f}(t)} \ll t^{\frac{3}{2}+\frac{\epsilon}{2}},$ and if $ t^{\frac{1}{2}}<\kappa,$ then $\sqrt{q_{f}(t)} \ll \kappa^3 .$ 
If  $t^{\frac{1}{2}}\geq \kappa,$ we have 
\begin{equation}\label{aproxfunc}  L_{f}^{(2)}(\frac{1}{2}+it)= \sum_{n<t^{\frac{3}{2}+\frac{2\epsilon}{3}}} \frac{a_{f}^{(2)}(n)}{n^{\frac{1}{2}+it}}V_{\frac{1}{2}+it}(\frac{n}{\sqrt{q_{f}(t)}}) +\sum_{n<t^{\frac{3}{2}+\frac{2\epsilon}{3}}} \frac{b_{f}^{(2)}(n)}{ n^{\frac{1}{2}-it}}V_{\frac{1}{2}-it}(\frac{n}{\sqrt{q_{f}(t)}}) + O_{\epsilon}(t^{-1000})
\end{equation}
where $$a_{f}^{(2)}(n):=   \sum_{ml^{2}=n} \lambda_{f}(m^{2}) ,$$ $$b_{f}^{(2)}(n):= \epsilon_{f} \overline{\sum_{ml^{2}=n} \lambda_{f}(m^{2})}.$$
By the Deligne bound, we have
$$a_{f}^{(2)}(n), b_{f}^{(2)}(n) \ll d_{2}(n)^{3}.$$
Therefore, by the Dirichlet mean value theorem and the majorant principle (see \cite[Chapter 7, Theorem 3]{Montgomery10}), we get
\begin{equation} \begin{split} \int_{T}^{2T}  & \big| L_{f}^{(2)}(\frac{1}{2}+it)\big|^{2}dt \\&\ll_{f,\epsilon} \int_{T}^{2T} \Big| \sum_{n \leq t^{\frac{3}{2}+\frac{\epsilon}{2}}} \frac{|a_{f}^{(2)}(n)|}{ n^{\frac{1}{2}+it}} \Big|^{2} dt +  \int_{T}^{2T} \Big| \sum_{n \leq t^{\frac{3}{2}+\frac{\epsilon}{2}}} \frac{| b_{f}^{(2)}(n) |}{n^{\frac{1}{2}-it}} \Big|^{2} dt \\&\ll_{f,\epsilon} \big(T+ O(T^{\frac{3}{2}+\frac{\epsilon}{2}})\big) \sum_{n \leq T^{\frac{3}{2}+\frac{\epsilon}{2}}} \frac{ |a_{f}^{(2)}(n)|^{2} +|b_{f}^{(2)}(n)|^{2}}{n}  \\&\ll_{f,\epsilon} T^{\frac{3}{2}+\epsilon}\end{split} \end{equation}
for $\frac{T^{\frac{1}{2}}}{\sqrt{2}}\geq \kappa.$
If $t^{\frac{1}{2}} <\kappa ,$
\eqref{originalfunctional} is 
$$\ll_{f.\epsilon}\sum_{n \leq \kappa^{3+\epsilon}} \frac{ \big| \sum_{ml^{2}=n} \lambda_{f}(m^{2}) \big|}{n^{\frac{1}{2}}}+ \kappa^{-1000}.$$
Therefore, if $ \frac{T^{\frac{1}{2}}}{\sqrt{2}}<\kappa ,$ we have
$$ \int_{T}^{2T}  \big| L_{f}^{(2)}(\frac{1}{2}+it)\big|^{2}dt \ll_{f} 1.$$
Taking the above upper bounds to the right-hand side of \eqref{5integral1}, we have 
\begin{equation}\label{ttintegral}
 \int_{T}^{2T} \big|D[\lambda_{f}^{2}](\frac{1}{2}+it)\big|^{2}dt \ll_{f,\epsilon} (1+T)^{\frac{11}{6}+ \epsilon}.\end{equation}
\end{proof}
\begin{Rem}\label{Remark 5.4} 
In the same manner as Lemma \ref{Lemma 5.3}, we have
\begin{equation}\left|\label{5integral2}\int_{T}^{2T} |\mathcal{D}(\frac{1}{2}+it)|^{2}dt\right| \ll_{f,q_{0}, q_{1}, \chi,\epsilon} (1+|T|)^{\frac{11}{6}+\epsilon}\end{equation}
for all $\mathcal{D}(s) \in \{D[\lambda_{f}^{2}](s;q_{1}), D[\lambda_{f}^{2}](s,\chi), D[\lambda_{f}^{2}](s,\chi;q_{1}),D[\lambda_{f}^{2}](s,\chi,q_{0}) : q_{0}q_{1}=q,\\ \chi (\textrm{mod} \; q_{1}), \tau(\chi) \neq \rm{0} \}$ 
(for the notations, see Subsection 2.1). These Dirichlet series come from the transform in Lemma \ref{Lemma 3.1}. Therefore, we don't need to consider some characters $\chi$ such that $\tau(\chi)=0.$ In \cite{PYOUNG} the Weyl bounds for Dirichlet L-functions are proved. For the lower bounds of Dirichlet L-functions at the edge of the critical strip, see \cite[Theorem 11.4]{MontgomeryVaughan} (If $L(s,\chi)$ has an exceptional zero $\beta$ such that $|1+2it-\beta|\leq \frac{1}{\log q},$ then one can apply the bound of Siegel $1-\beta \gg_{\epsilon} q^{-\epsilon}$ for any $\epsilon>0.$ See \cite[Corollary 11.15]{MontgomeryVaughan}). Since the absolute constant in \eqref{5integral2} has polynomial growth, these are bounded by some power of $\log X.$ When $T<0,$ we use the fact that $\left|\mathcal{D}(\frac{1}{2}+it)\right|=\left|\mathcal{D}(\frac{1}{2}-it)\right|.$ Therefore, we have
\begin{equation}\label{55integral2}\left|\int_{T}^{2T} |\mathcal{D}(\frac{1}{2}+it)|^{2}dt\right| \ll_{f,\epsilon} (\log X)^{\kappa_{1}} (1+|T|)^{\frac{11}{6}+\epsilon}\end{equation}
 for some constant $\kappa_{1}$ depending on $B.$ 
\end{Rem}
\begin{Lemma}\label{NLemma 5.5} Let $T \geq 0,$ and $t \in [T,2T].$ Then 
\begin{equation}\label{5integral22} |\mathcal{D}(\frac{1}{2}+it)| \ll_{f,\epsilon} (\log X)^{\kappa_{1}} (1+T)^{\frac{11}{12}+\epsilon} \end{equation}
for all $ \mathcal{D}(s) \in \{D[\lambda_{f}^{2}](s;q_{1}), D[\lambda_{f}^{2}](s,\chi), D[\lambda_{f}^{2}](s,\chi;q_{1}),D[\lambda_{f}^{2}](s,\chi,q_{0}) : q_{0}q_{1}=q,\\ \chi (\mathrm{mod} \; q_{1}), \tau(\chi) \neq 0 \}.$
\end{Lemma}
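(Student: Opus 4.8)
The plan is to read \eqref{5integral22} as a convexity-type \emph{pointwise} estimate and to prove it from exactly the ingredients already assembled in the proof of Lemma \ref{Lemma 5.3}, the only difference being that the Dirichlet polynomial coming from the approximate functional equation is estimated trivially (pointwise) rather than in mean square. The exponent is explained by the splitting $\frac{11}{12}=\frac{3}{4}+\frac{1}{6}$: the $\frac34$ will be the convexity bound for the degree-three symmetric-square factor, and the $\frac16$ the Weyl bound for the zeta- or Dirichlet-$L$ factor on the critical line.

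First I would dispose of the untwisted model $\mathcal{D}=D[\lambda_{f}^{2}]$. Using the factorization $D[\lambda_{f}^{2}](s)=\frac{\zeta(s)}{\zeta(2s)}L_{f}^{(2)}(s)$ from the proof of Lemma \ref{Lemma 5.3} together with the approximate functional equation \eqref{aproxfunc}, I would bound $L_{f}^{(2)}(\tfrac12+it)$ trivially: since $a_{f}^{(2)}(n),b_{f}^{(2)}(n)\ll d_{2}(n)^{3}$ and both sums have length $\ll t^{3/2+\epsilon/2}$, partial summation on $\sum_{n\le N}d_{2}(n)^{3}\ll N(\log N)^{7}$ gives $\sum_{n\le N}d_{2}(n)^{3}n^{-1/2}\ll N^{1/2}(\log N)^{7}$, whence $L_{f}^{(2)}(\tfrac12+it)\ll t^{3/4+\epsilon}$. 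Combining this with the Weyl bound $\zeta(\tfrac12+it)\ll t^{1/6+\epsilon}$ and the lower bound $|\zeta(1+2it)|^{-1}\ll(\log t)^{7}$ already invoked in Lemma \ref{Lemma 5.3} yields $D[\lambda_{f}^{2}](\tfrac12+it)\ll t^{1/6+\epsilon}(\log t)^{7}\,t^{3/4+\epsilon}\ll t^{11/12+\epsilon}$, which is \eqref{5integral22} in this case.

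Next I would treat the remaining members of the list uniformly. By Remark \ref{Remark 3.3} and its character-twisted analogues, each of $D[\lambda_{f}^{2}](s;q_{1})$, $D[\lambda_{f}^{2}](s,\chi)$, $D[\lambda_{f}^{2}](s,\chi;q_{1})$ and $D[\lambda_{f}^{2}](s,\chi,q_{0})$ factors, up to the finite Euler product $R_{q_{0},q_{1}}(s)$ over primes $p\mid q$ which obeys $R_{q_{0},q_{1}}(\tfrac12+it)\ll(\log X)^{O(1)}$ by \eqref{3Rq0q1}, as $\frac{L(s,\chi)}{L(2s,\chi^{2})}\,L_{f}^{(2)}(s,\chi)$ (with $\chi$ principal in the untwisted restricted cases). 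Here $L_{f}^{(2)}(s,\chi)$ is the $\chi$-twisted symmetric square, a degree-three entire $L$-function whose $t$-aspect analytic conductor is $\asymp (q_{1}t)^{3}\ll t^{3+o(1)}$ because $q_{1}\le Q=(\log X)^{B}$; its approximate functional equation therefore still has length $\ll t^{3/2+\epsilon}$, and the same trivial estimate gives $L_{f}^{(2)}(\tfrac12+it,\chi)\ll t^{3/4+\epsilon}$ uniformly. Invoking the Weyl bound for $L(\tfrac12+it,\chi)$ of Petrow--Young and the Montgomery--Vaughan lower bound for $L(1+2it,\chi^{2})$, both recalled in Remark \ref{Remark 5.4} and having only polynomial dependence on $q_{1}$, I again reach $t^{11/12+\epsilon}$ in every case; the coprimality restrictions change the estimate only by the finitely many local factors at $p\mid q_{1}\le Q$, each $1+O(p^{-1/2})$ on $\Re(s)=\tfrac12$, hence by $(\log X)^{O(1)}$.

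The steps I expect to be delicate are structural rather than computational: verifying that each twisted factor $L_{f}^{(2)}(s,\chi)$ is entire and satisfies a clean functional equation (so that \eqref{aproxfunc} applies verbatim and no residual main term appears), which is why the list is restricted to characters with $\tau(\chi)\neq 0$ as in Remark \ref{Remark 5.4}; and handling the case where $\chi^{2}$ is principal, in which $L(2s,\chi^{2})$ has a pole at $s=\tfrac12$ (that is, $t=0$). Since \eqref{5integral22} is evaluated on $\Re(s)=\tfrac12$ at large $t$, one stays safely away from $t=0$, so $|L(1+2it,\chi^{2})|^{-1}\ll(\log t)^{O(1)}$ persists. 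As all auxiliary constants grow at most polynomially in $q_{1}\le(\log X)^{B}$, the implied constant depends only on $f,B,\epsilon$, as claimed.
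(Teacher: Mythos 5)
Your proposal is correct and takes essentially the same route as the paper: the paper's own (terse) proof of Lemma \ref{NLemma 5.5} likewise uses the factorizations from Lemma \ref{Lemma 5.3} and Remark \ref{Remark 5.4}, combining the Weyl bound $t^{\frac{1}{6}+\epsilon}$ for the degree-one factor, the $(\log t)^{O(1)}$ lower bound for the denominator on the $1$-line, and the convexity bound $t^{\frac{3}{4}+\epsilon}$ for $L_{f}^{(2)}$ and its twists, which is exactly your splitting $\frac{11}{12}=\frac{1}{6}+\frac{3}{4}$. Your trivial (pointwise) estimation of the approximate functional equation is just the standard derivation of that convexity bound, so the two arguments coincide in substance, with yours supplying the details the paper leaves implicit.
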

\begin{proof} As mentioned in the proof of Lemma \ref{Lemma 5.3} and Remark \ref{Remark 5.4}, we have the Weyl bounds for the Riemann zeta function and Dirichlet L-functions.
The proof comes from the direct application of the convexity bounds for $L_{f}^{(2)}(\frac{1}{2}+it)$ and its twisted one. By an application of \cite[Lemma 11.9]{Titchmarsh1}, one can get the same bound.     
\end{proof}
\begin{Lemma}\label{Lemma 5.5}
Let $\epsilon>0$ be sufficiently small. Then
for any $\delta\geq \frac{1}{2}+\epsilon,$
\begin{equation}
\int_{X}^{2X} |E_{\delta}(x)|^{2} dx =\int_{X}^{2X} \big|\frac{W(q,x+X^{\delta})-W(q,x)}{X^{\delta}}\big|^{2}dx +O_{f,B,\epsilon}(X^{\frac{3}{2}-\delta+\frac{\epsilon}{500}})
\end{equation}
where $W(q,y):=\textrm{Res}_{s=1} \big(\sum_{q=q_{0}q_{1}} \frac{\mu(q_{1})}{\phi(q_{1})}\frac{D[\lambda_{f}^{2}](s,q_{0};q_{1})}{s} (\frac{y}{q_{0}})^{s}\big).$
\end{Lemma}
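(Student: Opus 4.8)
The plan is to realise the short average $X^{\delta}E_{\delta}(x)=\sum_{x\le n\le x+X^{\delta}}\lambda_f(n)^2 e(\tfrac aq n)$ as the difference $G(x+X^{\delta})-G(x)$ of the twisted partial sum $G(y):=\sum_{n\le y}\lambda_f(n)^2 e(\tfrac aq n)$, and to split $G$ into its main term $W(q,y)$ and an error $R(y):=G(y)-W(q,y)$. First I would insert the factorisation of the additive twist from Lemma \ref{Lemma 3.1}: writing $F(s):=\sum_{n}\lambda_f(n)^2 e(\tfrac aq n)n^{-s}$, only the principal characters $\chi_0\ (\mathrm{mod}\ q_1)$ (for which $\tau(\bar\chi_0)\chi_0(a)=\mu(q_1)$, since $(a,q)=1$) contribute a pole at $s=1$, and that pole is simple because $D[\lambda_f^{2}](s)=\zeta(s)L_f^{(2)}(s)/\zeta(2s)$ has a simple pole there. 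A truncated Perron formula followed by a contour shift to $\Re(s)=\tfrac12$ then yields $G(y)=W(q,y)+R(y)$, with $W(q,y)$ exactly the residue in the statement and $R(y)=\frac{1}{2\pi i}\int_{(1/2)}F(s)\tfrac{y^s}{s}\,ds$ up to a negligible truncation error. Since the pole is simple, $W(q,\cdot)$ is \emph{linear}, so $M:=W(q,x+X^{\delta})-W(q,x)=c_q X^{\delta}$ is constant in $x$ with $c_q=\sum_{q=q_0q_1}\frac{\mu(q_1)}{\phi(q_1)q_0}\operatorname{Res}_{s=1}D[\lambda_f^2](s,q_0;q_1)\ll(\log X)^{O(1)}$; this is precisely the first term on the right of the asserted identity.

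Next I would expand the difference of the two integrals. Setting $\Delta(x):=R(x+X^\delta)-R(x)=X^\delta E_\delta(x)-M$, one has $|X^\delta E_\delta|^2-|M|^2=2\Re(\bar M\Delta)+|\Delta|^2$, so
\[
\int_X^{2X}|E_\delta(x)|^2\,dx-\int_X^{2X}\Big|\tfrac{W(q,x+X^\delta)-W(q,x)}{X^\delta}\Big|^2\,dx
=\frac{1}{X^{2\delta}}\int_X^{2X}\big(2\Re(\bar M\Delta)+|\Delta|^2\big)\,dx .
\]
By Cauchy--Schwarz and $\int_X^{2X}|M|^2\,dx\ll|c_q|^2X^{2\delta+1}$, the whole right-hand side reduces to a single bound $\int_X^{2X}|\Delta(x)|^2\,dx\ll X^{2-\eta}$ for some $\eta>0$: indeed the cross term is then $\ll X^{1/2-\delta}\big(\int_X^{2X}|\Delta|^2\big)^{1/2}(\log X)^{O(1)}$ and the diagonal is $\ll X^{-2\delta}\int_X^{2X}|\Delta|^2$, and both fall below $X^{3/2-\delta+\epsilon/500}$ precisely when $\delta>\tfrac12$ — this is the source of the hypothesis $\delta\ge\tfrac12+\epsilon$.

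The heart of the argument is the mean square bound for $\Delta$. Using the representation $\Delta(x)=\frac{1}{2\pi}\int_{\mathbb R}F(\tfrac12+it)\,\tfrac{(x+X^\delta)^{1/2+it}-x^{1/2+it}}{1/2+it}\,dt$ together with the kernel estimate $\big|\tfrac{(x+X^\delta)^{1/2+it}-x^{1/2+it}}{1/2+it}\big|\ll\min\!\big(X^{\delta-1/2},\,X^{1/2}/|t|\big)$, I would split the frequency integral at $|t|=X^{1-\delta}$ and apply the mean value theorem for Dirichlet polynomials (this is the Parseval input). The required second moment $\int_T^{2T}|F(\tfrac12+it)|^2\,dt\ll(\log X)^{O(1)}T^{11/6+\epsilon}$ follows by feeding the decomposition of Lemma \ref{Lemma 3.1} into Lemma \ref{Lemma 5.3} and Remark \ref{Remark 5.4}, whose $q$-dependence is only polylogarithmic. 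Both the range $|t|\le X^{1-\delta}$ and the range $|t|>X^{1-\delta}$ then contribute $\ll X^{11/6+\delta/6+\epsilon}$, so $\int_X^{2X}|\Delta|^2\,dx\ll X^{11/6+\delta/6+\epsilon}$, which is $\ll X^{2-\epsilon/6}$ for $\delta\le 1-\epsilon$; substituting this into the display gives the claimed error $O_{f,\epsilon}(X^{3/2-\delta+\epsilon/500})$.

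The main obstacle is exactly this mean square estimate together with the balancing of the two frequency regimes. Because $\lambda_f(n)^2$ is a Rankin--Selberg (degree-four) coefficient, $F$ carries the degree-three factor $L_f^{(2)}$, whose conductor grows like $t^{3}$; its second moment alone costs $T^{3/2}$, and only the subconvex (Weyl-type) bound $\zeta(\tfrac12+it)\ll t^{1/6+\epsilon}$ brings the total down to $T^{11/6}$. It is essential that this exponent, after the $X^{\delta}$-weighting, stays below $X^{2}$: this is what forces the localisation of the $t$-integral near $|t|\asymp X^{1-\delta}$ — a crude use of the moment over all $|t|\lesssim X$ would be hopelessly lossy — and what ultimately restricts the method to $\delta>\tfrac12$. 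Keeping the character-twisted second moments uniform in $q$ up to powers of $\log X$, so that summing over the $O(q)$ characters produced by Lemma \ref{Lemma 3.1} is harmless, is the remaining technical point.
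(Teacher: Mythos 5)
Your proposal is correct in substance and shares the paper's overall skeleton: Lemma \ref{Lemma 3.1} to isolate the principal characters, Perron's formula with a contour shift to $\Re(s)=\tfrac12$ so that the simple pole at $s=1$ yields the \emph{linear} main term $W(q,\cdot)$ (whence the $W$-difference is the constant $c_qX^{\delta}$), and the second moment bound $\int_T^{2T}|\cdot|^2\,dt\ll T^{11/6+\epsilon}$ of Lemma \ref{Lemma 5.3} and Remark \ref{Remark 5.4} as the analytic engine. Where you genuinely depart from the paper is in the mean-square treatment of the remainder. The paper never differences the error term: writing $\bigtriangleup(x)$ for the Perron remainder, it applies the Mellin--Plancherel identity (Ivi\'c, (A.5)) to $\int_0^\infty|\bigtriangleup(x)x^{-1}|^2dx$, converts this into $\int_{\mathbb{R}}|D(\tfrac12+it)|^2|\tfrac12+it|^{-2}dt=O(1)$, and settles for $\int_X^{2X}|\bigtriangleup|^2dx\ll X^{2+\epsilon/1000}$ with no power saving; the exponent $\tfrac32-\delta+\tfrac{\epsilon}{500}$ then comes entirely from the cross term against the constant $W$-difference, exactly as in your display. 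You instead keep the differenced remainder $\Delta(x)=R(x+X^{\delta})-R(x)$, use the kernel bound $\min(X^{\delta-1/2},X^{1/2}/|t|)$, and split frequencies at $|t|\asymp X^{1-\delta}$, obtaining the stronger $\int_X^{2X}|\Delta|^2dx\ll X^{11/6+\delta/6+\epsilon}$; this buys an actual power saving, at the cost of a more delicate bilinear step. Two caveats on that step: the phrase ``mean value theorem for Dirichlet polynomials'' is not literally applicable, since $F(\tfrac12+it)$ is an analytically continued $L$-series rather than a Dirichlet polynomial --- what actually works is integrating over $x$ first to gain the off-diagonal decay $(1+|t-t'|)^{-1}$ and only then inserting the dyadic moment bound, i.e.\ the Parseval-type computation you gesture at; and the line integral on $\Re(s)=\tfrac12$ is only conditionally convergent, so it must be defined through truncated integrals and a limit, which is precisely what the paper's use of Parseval's identity with $\lim_{T\to\infty}$ accomplishes. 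Neither caveat breaks your outline, and your extra restriction $\delta\le1-\epsilon$ is harmless since the lemma is only invoked with $\delta\in\{\tfrac12+\epsilon,\tfrac56+2\epsilon\}$.
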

\begin{proof} 
By Lemma \ref{Lemma 3.1}, we see that 
$$\sum_{n=1}^{\infty}\lambda_{f}(n)^{2}e(\frac{a}{q}n)n^{-s}= \sum_{q=q_{0}q_{1}} \frac{1}{\phi(q_{1})q_{0}^{s}
} \sum_{\chi (\mathrm{mod} \thinspace q_{1})} \tau(\bar{\chi})\chi(a) \sum_{n=1}^{\infty} \lambda_{f}(q_{0}n)^{2}\chi(n)n^{-s}$$ for $\Re(s)>1.$
This yields
\begin{equation} \begin{split}\sum_{n=1}^{\infty} \lambda_{f}(n)^{2}e(\frac{a}{q}n)n^{-s}= \sum_{q=q_{0}q_{1}}& \frac{\mu(q_{1})}{\phi(q_{1})q_{0}^{s}}D[\lambda_{f}^{2}](s,q_{0};q_{1}) \\&+\sum_{q=q_{0}q_{1}}  \frac{1}{\phi(q_{1})q_{0}^{s}} \sum_{\chi (\mathrm{mod} \thinspace q_{1}) \atop \chi \neq \chi_{0}} \tau(\bar{\chi})\chi(a)D[\lambda_{f}^{2}](s,\chi,q_{0})\end{split} \nonumber \end{equation}
where $\chi_{0}$ is the principal character modulo $q_{1}.$
Let $$\bigtriangleup(x):= \sum_{1\leq n \leq x} \lambda_{f}(n)^{2}e(\frac{a}{q}n)-W(q,x).$$
%where $W(q,x):=\textrm{Res}_{s=1} \big(\sum_{q=q_{0}q_{1}} \frac{\mu(q_{1})}{\phi(q_{1})}\frac{D[\lambda_{f}^{2}](s,q_{0};q_{1})}{s} (\frac{x}{q_{0}})^{s}\big).$
By Perron's formula, we see that
\begin{equation}\begin{split} \sum_{1 \leq n \leq x} \lambda_{f}(n)^{2}e(\frac{a}{q}n)=&\frac{1}{2\pi i} \Big( \sum_{q=q_{0}q_{1}} \frac{\mu(q_{1})}{\phi(q_{1})} \int_{1+\epsilon-iT}^{1+\epsilon+iT}\frac{D[\lambda_{f}^{2}](s,q_{0};q_{1})}{s}(\frac{x}{q_{0}})^{s}ds\\&+ \sum_{q=q_{0}q_{1}}  \frac{1}{\phi(q_{1})} \sum_{\chi (\mathrm{mod} \thinspace q_{1}) \atop \chi \neq \chi_{0}} \tau(\bar{\chi})\chi(a) \int_{1+\epsilon-iT}^{1+\epsilon+iT} \frac{D[\lambda_{f}^{2}](s,\chi,q_{0})}{s}(\frac{x}{q_{0}})^{s}ds\Big)\\&+O_{\epsilon}\big(\frac{x^{1+\epsilon}}{T}\sum_{n=1}^{\infty} \frac{|\lambda_{f}(n)^{2}|}{n^{1+\epsilon}}+\sum_{\frac{x}{2}\leq n \leq 2x \atop n \neq x} |\lambda_{f}(n)^{2}| \min(1,\frac{x}{T|x-n|})\big) \nonumber \end{split} \end{equation}
for any $T>0.$ By \eqref{Fomenko} and the Cauchy-Schwarz inequality, we have
$$ \sum_{\frac{x}{2}\leq n \leq 2x \atop n \neq x} |\lambda_{f}(n)^{2}| \min(1,\frac{x}{T|x-n|})\big) \ll_{f,\epsilon} \frac{x^{1+\epsilon}}{T^{\frac{1}{2}}}.$$
When $s_{0}=1+\epsilon+iT,$ 
\begin{equation}\begin{split} D[\lambda_{f}^{2}](s_{0},q_{0};q_{1})&=  \sum_{n=1 \atop (n,q_{1})=1}^{\infty}  \frac{\lambda_{f}(nq_{0})^{2}}{n^{1+\epsilon+iT}}
\\&\leq q_{0}^{1+\epsilon}\sum_{n=1}^{\infty}\frac{\lambda_{f}(n)^{2}}{n^{1+\epsilon}}
\\&\ll_{f,\epsilon}q_{0}^{1+\epsilon}.\end{split} \nonumber \end{equation} 
By Lemma \ref{NLemma 5.5}, the above inequality and the convexity principle,
\begin{equation}\label{5integral3}\int_{\frac{1}{2} \leq \sigma \leq 1} D[\lambda_{f}^{2}](\sigma+iT,q_{0};q_{1})  d\sigma \ll_{f,\epsilon}(\log X)^{\kappa_{2}}(1+T)^{\frac{11}{12}+\epsilon}\end{equation}
for some constant $\kappa_{2}$ depending on $B.$
Shifting the contour to the rectangular path $1+\epsilon +iT \rightarrow \frac{1}{2}+iT,  \frac{1}{2} +iT \rightarrow \frac{1}{2} -iT,  \frac{1}{2} -iT \rightarrow 1+\epsilon -iT$ and using the residue theorem, we have
\begin{equation}\label{5dlambda1} \begin{split}\int_{1+\epsilon-iT}^{1+\epsilon+iT}\frac{D[\lambda_{f}^{2}](s,q_{0};q_{1})}{s} (\frac{x}{q_{0}})^{s}ds &= \int_{\frac{1}{2}-iT}^{\frac{1}{2}+iT}\frac{D[\lambda_{f}^{2}](s,q_{0};q_{1})}{s} (\frac{x}{q_{0}})^{s}ds \\&+\textrm{Res}_{s=1}\big(\frac{D[\lambda_{f}^{2}](s,q_{0};q_{1})}{s} (\frac{x}{q_{0}})^{s}\big)+O_{f,B,\epsilon}(\frac{x^{1+\epsilon}}{T^{\epsilon}}).\end{split} \end{equation}
 By the similar argument of \eqref{5dlambda1},
$$\int_{1+\epsilon-iT}^{1+\epsilon+iT}\frac{D[\lambda_{f}^{2}](s,\chi,q_{0})}{s} (\frac{x}{q_{0}})^{s}ds = \int_{\frac{1}{2}-iT}^{\frac{1}{2}+iT}\frac{D[\lambda_{f}^{2}](s,\chi,q_{0})}{s} (\frac{x}{q_{0}})^{s}ds+O_{f,B,\epsilon}(\frac{x^{1+\epsilon}}{T^{\epsilon}}).$$
Therefore, we have \begin{equation}\label{dyad}\begin{split}\bigtriangleup(x)
= &\frac{1}{2\pi i} \sum_{q=q_{0}q_{1}}\Big(  \frac{\mu(q_{1})}{\phi(q_{1})} \int_{ \frac{1}{2}-iT}^{ \frac{1}{2}+iT} \frac{D[\lambda_{f}^{2}](s,q_{0};q_{1})}{s}(\frac{x}{q_{0}})^{s}ds \\&+  \frac{1}{\phi(q_{1})} \sum_{\chi (\mathrm{mod} \thinspace q_{1}) \atop \chi \neq \chi_{0}} \tau(\bar{\chi})\chi(a) \int_{\frac{1}{2}-iT}^{\frac{1}{2}+iT}  \quad \frac{D[\lambda_{f}^{2}](s,\chi,q_{0})}{s}  (\frac{x}{q_{0}})^{s}ds\Big) +O_{f,B,\epsilon}(\frac{x^{1+\epsilon}}{T^{\epsilon}}). \nonumber\end{split}\end{equation}
By \eqref{5integral2}, we see that if $T \geq 1,$ then 
\begin{equation}\label{newequation}\int_{T}^{2T}  \frac{|D[\lambda_{f}^{2}](\frac{1}{2}+it,q_{0};q_{1})|^{2}}{|(\frac{1}{2}+it)|^{2}}+\max_{\chi (\mathrm{mod} \thinspace q_{1}) \atop \chi \neq \chi_{0}, \tau(\chi) \neq 0} \frac{ |D[\lambda_{f}^{2}](\frac{1}{2}+it,\chi,q_{0})|^{2}}{|(\frac{1}{2}+it)|^{2}} dt \end{equation} \begin{equation} \begin{split} &\ll T^{-2}\int_{T}^{2T} |D[\lambda_{f}^{2}](\frac{1}{2}+it,q_{0};q_{1})|^{2}+\max_{\chi (\mathrm{mod} \thinspace q_{1}) \atop \chi \neq \chi_{0}, \tau(\chi) \neq 0} |D[\lambda_{f}^{2}](\frac{1}{2}+it,\chi,q_{0})|^{2} dt\\&\ll_{f,\epsilon} (\log X)^{\kappa_{1}}T^{-\frac{1}{6} +\epsilon}. \nonumber\end{split}\end{equation}
 By the similar argument in the proof of Lemma \ref{Lemma 5.3}, we have 
\begin{equation}\begin{split} &\int_{0}^{1}  \frac{|D[\lambda_{f}^{2}](\frac{1}{2}+it,q_{0};q_{1})|^{2}}{|(\frac{1}{2}+it)|^{2}}+\max_{\chi (\mathrm{mod} \thinspace q_{1}) \atop \chi \neq \chi_{0}, \tau(\chi) \neq 0} \frac{ |D[\lambda_{f}^{2}](\frac{1}{2}+it,\chi,q_{0})|^{2}}{|(\frac{1}{2}+it)|^{2}} dt
\\&\ll  \int_{0}^{1} |D[\lambda_{f}^{2}](\frac{1}{2}+it,q_{0};q_{1})|^{2}+\max_{\chi (\mathrm{mod} \thinspace q_{1}) \atop \chi \neq \chi_{0}, \tau(\chi) \neq 0}  |D[\lambda_{f}^{2}](\frac{1}{2}+it,\chi,q_{0})|^{2} dt
\\&\ll_{f} (\log X)^{\kappa_{3}} \nonumber \end{split}\end{equation}
for some constant $\kappa_{3} > \kappa_{1}.$ 
By summing the integrals over the intervals $[0,1),(-1,0],$ $[1,2),(-2,-1], [2,4)...,$ we have 
\begin{equation}\label{equation522}
\max_{q=q_{0}q_{1}}\Big( \int_{-\infty}^{\infty}\frac{|D[\lambda_{f}^{2}](\frac{1}{2}+it,q_{0};q_{1})|^{2}}{|(\frac{1}{2}+it)|^{2}} + \max_{\chi (\mathrm{mod} \thinspace q_{1}) \atop \chi \neq \chi_{0}, \tau(\chi) \neq \rm{0}}  \frac { |D[\lambda_{f}^{2}](\frac{1}{2}+it,\chi,q_{0})|^{2}}{|(\frac{1}{2}+it)|^{2}} dt\Big) =O_{f}( (\log X)^{ \kappa_{3}}).\end{equation}
Let $$F_{q_{0};q_{1}}(s):=\frac{D[\lambda_{f}^{2}](s,q_{0};q_{1})}{s},\; \widehat {F}_{q_{0};q_{1}}(x,T):= \frac{1}{2 \pi i} \int_{\frac{1}{2}- iT}^{\frac{1}{2}+iT} F_{q_{0};q_{1}}(s) x^{-s}ds,$$
$$F_{\chi,q_{0}}(s):=\frac{D[\lambda_{f}^{2}](s,\chi,q_{0})}{s},\; \widehat{F}_{\chi,q_{0}}(x,T):= \frac{1}{2 \pi i} \int_{\frac{1}{2}- iT}^{\frac{1}{2}+iT} F_{\chi,q_{0}}(s) x^{-s}ds.$$
We have
\begin{equation} \begin{split} \bigtriangleup(x)&= \lim_{T \rightarrow \infty} \frac{1}{2\pi i} \sum_{q=q_{0}q_{1}}\Big(  \frac{\mu(q_{1})}{\phi(q_{1})} \widehat{F}_{q_{0};q_{1}}(\frac{q_{0}}{x},T)+  \frac{1}{\phi(q_{1})} \sum_{\chi (\mathrm{mod} \thinspace q_{1}) \atop \chi \neq \chi_{0}} \tau(\bar{\chi})\chi(a) \widehat{F}_{\chi,q_{0}}(\frac{q_{0}}{x},T)\Big). \end{split} \nonumber \end{equation}
%Let's consider 
%\begin{equation} \int_{X}^{2X} | \widehat{F}_{q_{0};q_{1}}(\frac{q_{0}}{x},T_{0})|^{2} \frac{q_{0}}{x^{2}}dx. \end{equation}
%This is equals to
%\begin{equation}\begin{split} \int_{\frac{q_{0}}{2X}}^{\frac{q_{0}}{X}} | \widehat{F}_{q_{0};q_{1}}(y,T_{0})|^{2}dy
%&=\int_{\frac{q_{0}}{2X}}^{\frac{q_{0}}{X}} \widehat{F}_{q_{0};q_{1}}(y,T_{0}) \frac{1}{2\pi i} \int_{\frac{1}{2}-iT_{0}}^{\frac{1}{2}+iT_{0}} \overline{F_{q_{0};q_{1}}(s)}\frac{q_{0}}{y}y^{-\bar{s}} ds dy \\&=  \frac{1}{2\pi i} \int_{\frac{1}{2}-iT_{0}}^{\frac{1}{2}+iT_{0}}  \overline{F_{q_{0};q_{1}}(s)} \int_{\frac{q_{0}}{2X}}^{\frac{q_{0}}{X}} \widehat{F}_{q_{0};q_{1}}(y,T_{0})y^{-\bar{s}} ds dy.   \end{split}\end{equation}
%As a convexity bound, we have $|F_{q_{0};q_{1}}(s)| \ll_{\epsilon} X^{\epsilon}|s|^{\frac{3}{4}-1}.$
%Therefore, by the residue theorem,
%\begin{equation}\begin{split} \int_{\frac{q_{0}}{2X}}^{\frac{q_{0}}{X}} \widehat{F}_{q_{0};q_{1}}(y,T_{0})y^{-\bar{s}} dy =&  \int_{\frac{1}{2}-iT_{0}}^{\frac{1}{2}+iT_{0}}  F_{q_{0};q_{1}}(u) \int_{\frac{q_{0}}{2X}}^{\frac{q_{0}}{X}} y^{-u-\bar{s}} dy du \\&\ll_{\epsilon} |F_{q_{0};q_{1}}(s)|+ X^{\epsilon} T_{0}^{-\frac{1}{4}}
%\end{split}\end{equation}
By Parseval's identity(\cite{Ivicbook}, (A.5)), we see that
\begin{equation}\label{5Parseval}\begin{split}&\frac{1}{2\pi } \int_{-\infty}^{\infty} \Big|\frac{D[\lambda_{f}^{2}](\frac{1}{2}+it,q_{0};q_{1})}{\frac{1}{2}+it}\Big|^{2}dt= \lim_{T \rightarrow \infty} \int_{0}^{\infty} |\widehat{F}_{q_{0};q_{1}}(x,T)|^{2}dx,\\&
\frac{1}{2\pi } \int_{-\infty}^{\infty} \Big|\frac{D[\lambda_{f}^{2}](\frac{1}{2}+it,\chi,q_{0})}{\frac{1}{2}+it}\Big|^{2}dt= \lim_{T\rightarrow \infty} \int_{0}^{\infty} |\widehat{F}_{\chi,q_{0}}(x,T)|^{2}dx.\end{split}\end{equation}
By the Cauchy-Schwarz inequality, \eqref{equation522} and \eqref{5Parseval}, we get
\begin{equation}
\begin{split}\int_{0}^{\infty}& |\frac{\bigtriangleup(x)}{x}|^{2}dx \\&\ll_{f} \Big(\sum_{q=q_{0}q_{1}} \sum_{\chi (\mathrm{mod} \thinspace q_{1})} |\tau(\bar{\chi})|^{2} \Big) \Big(\sum_{q=q_{0}q_{1}} \sum_{\chi (\mathrm{mod} \thinspace q_{1})} (\log X)^{\kappa_{3}}\Big)
\\&\ll_{f} (\log X)^{\kappa_{3}+5B} \end{split}\end{equation} 
(recall that $|\tau(\bar{\chi})|\leq q^{\frac{1}{2}}$).
Thus, we have \begin{equation} \int_{X}^{2X} |\bigtriangleup(x)|^{2} dx \ll X^{2} \int_{0}^{\infty} |\frac{\bigtriangleup(x)}{x}|^{2}dx \ll_{f,B,\epsilon} X^{2+\frac{\epsilon}{1000}}. \end{equation}
Since $E_{\delta}(x)= \frac{\bigtriangleup(x+X^{\delta})-\bigtriangleup(x)}{X^{\delta}}+\frac{W(q,x+X^{\delta})-W(q,x)}{X^{\delta}},$ 
\begin{equation}\label{5integralerror1} \begin{split}
\int_{X}^{2X} &|E_{\delta}(x)|^{2} dx -\int_{X}^{2X} |\frac{W(q,x+X^{\delta})-W(q,x)}{X^{\delta}}|^{2}dx\\&
= O_{\epsilon}\Big(\int_{X}^{2X} \big|\frac{\max \big(\bigtriangleup(x),\bigtriangleup(x+X^{\delta})\big)}{X^{\delta}}\frac{W(q,x+X^{\delta})-W(q,x)}{X^{\delta}}\big|dx \\&
+ \int_{X}^{2X} \frac{\max \big(\bigtriangleup(x),\bigtriangleup(x+X^{\delta})\big)^{2}}{X^{2\delta}}dx\Big).
\end{split}
\end{equation}
By the calculation of $D_{q}$ (see \eqref{3dq}), we have
$$\big|\frac{W(q,x+X^{\delta})-W(q,x)}{X^{\delta}}\big|\ll_{f,B,\epsilon} X^{\frac{\epsilon}{1000}}.$$
Therefore, \eqref{5integralerror1} is bounded by
\begin{equation} \begin{split} & O_{f,B,\epsilon}\Big(X^{\frac{1}{2}+\frac{\epsilon}{1000}}(\int_{X}^{2X}  \frac{\max(\bigtriangleup(x),\bigtriangleup(x+X^{\delta}))^{2}}{X^{2\delta}} dx\big)^{\frac{1}{2}}+ \int_{X}^{2X} \frac{\max(\bigtriangleup(x),\bigtriangleup(x+X^{\delta}))^{2}}{X^{2\delta}}dx\Big)\\
&=O_{f,B,\epsilon}(X^{1+\frac{1}{2}-\delta+\frac{\epsilon}{500}}).
\end{split}
\end{equation} 
\end{proof}
\begin{Prop}\label{Proposition 5.6} Let $\epsilon>0$ be sufficiently small. Then
 $$\int_{X}^{2X} |E_{d}(x)|^{2}-|E_{c}(x)|^{2}dx =O_{f,B,\epsilon}(X^{1-\frac{\epsilon}{2}}).$$
\end{Prop}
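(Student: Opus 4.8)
The plan is to apply Lemma \ref{Lemma 5.5} with $\delta=d$ and $\delta=c$, reducing the statement to a comparison of main terms. Writing
$$M_{\delta}(x):=\frac{W(q,x+X^{\delta})-W(q,x)}{X^{\delta}},$$
Lemma \ref{Lemma 5.5} gives $\int_{X}^{2X}|E_{\delta}(x)|^{2}\,dx=\int_{X}^{2X}|M_{\delta}(x)|^{2}\,dx+O_{f,\epsilon}(X^{\frac{3}{2}-\delta+\frac{\epsilon}{500}})$ for $\delta\in\{c,d\}$. The two error terms are $X^{1-\epsilon+\frac{\epsilon}{500}}$ (for $\delta=d$) and $X^{\frac{2}{3}-2\epsilon+\frac{\epsilon}{500}}$ (for $\delta=c$), both comfortably $O(X^{1-\frac{\epsilon}{2}})$, so it suffices to bound $\int_{X}^{2X}\big(M_{d}(x)^{2}-M_{c}(x)^{2}\big)\,dx$, where I have used that $W$ is real-valued so that $|M_{\delta}|^{2}=M_{\delta}^{2}$.

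The crux is that $M_{\delta}$ has a leading term that does not depend on $\delta$. By Remark \ref{Remark 3.3}, each $D[\lambda_{f}^{2}](s,q_{0};q_{1})$ inherits from $\zeta^{2}(s)$ a double pole at $s=1$; hence $W(q,y)$, being the residue at $s=1$ of a finite sum of functions $\frac{\mu(q_{1})}{\phi(q_{1})}\frac{D[\lambda_{f}^{2}](s,q_{0};q_{1})}{s}(y/q_{0})^{s}$ each carrying a double pole there, takes the shape
$$W(q,y)=C_{1}\,y\log y+C_{2}\,y$$
for real constants $C_{1},C_{2}$ depending on $f$ and $q$ (reality follows since the coefficients $\lambda_{f}(q_{0}n)^{2}$ and the weights $\mu/\phi$ are real). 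Then $W'(q,t)=C_{1}(\log t+1)+C_{2}$, and writing $M_{\delta}(x)=X^{-\delta}\int_{x}^{x+X^{\delta}}W'(q,t)\,dt$ yields
$$M_{\delta}(x)=C_{1}(\log x+1)+C_{2}+\frac{C_{1}}{X^{\delta}}\int_{x}^{x+X^{\delta}}\log\frac{t}{x}\,dt=C_{1}(\log x+1)+C_{2}+O\big(C_{1}X^{\delta-1}\big),$$
since $\int_{x}^{x+X^{\delta}}\log(t/x)\,dt\le X^{2\delta}/(2x)$ for $x\in[X,2X]$.

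Subtracting the two expansions, the common main term $C_{1}(\log x+1)+C_{2}$ cancels and, using $c>d$, we obtain $M_{d}(x)-M_{c}(x)=O(C_{1}X^{c-1})=O(C_{1}X^{-\frac{1}{6}+2\epsilon})$. The bound $|M_{\delta}(x)|\ll_{f,B,\epsilon}X^{\frac{\epsilon}{1000}}$ already recorded in the proof of Lemma \ref{Lemma 5.5} (obtained from the calculation of $D_{q}$ in \eqref{3dq}) gives $|M_{d}(x)+M_{c}(x)|\ll_{f,B,\epsilon}X^{\frac{\epsilon}{1000}}$; and since these residue coefficients are themselves controlled by \eqref{3dq}, $C_{1}\ll_{f,B,\epsilon}X^{\frac{\epsilon}{1000}}$. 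Factoring $M_{d}^{2}-M_{c}^{2}=(M_{d}-M_{c})(M_{d}+M_{c})$ and integrating over an interval of length $X$ then gives
$$\int_{X}^{2X}\big(M_{d}(x)^{2}-M_{c}(x)^{2}\big)\,dx\ll_{f,B,\epsilon}X\cdot X^{-\frac{1}{6}+2\epsilon}\cdot X^{\frac{\epsilon}{500}}=X^{\frac{5}{6}+2\epsilon+\frac{\epsilon}{500}}\ll_{f,B,\epsilon}X^{1-\frac{\epsilon}{2}}$$
for $\epsilon$ sufficiently small, which together with the two error terms from Lemma \ref{Lemma 5.5} proves the proposition.

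The main obstacle is establishing the precise analytic form of $W(q,y)$: that the order-two pole at $s=1$ produces exactly a degree-one polynomial in $\log y$ multiplied by $y$, with real coefficients that are uniformly polylogarithmic in $X$ for $q\le(\log X)^{B}$. Once this is secured, the $\delta$-independence of the leading term forces the cancellation in the finite difference $M_{d}-M_{c}$, which is what lowers the size of the integral from $X^{1+o(1)}$ to $X^{\frac{5}{6}+O(\epsilon)}$; the remaining manipulations are routine Taylor expansion together with triangle-inequality and Cauchy--Schwarz bookkeeping.
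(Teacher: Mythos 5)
You follow the paper's reduction exactly — apply Lemma \ref{Lemma 5.5} with $\delta=d$ and $\delta=c$, then compare main terms — and your closing estimates are all valid, but your key analytic claim is wrong: $D[\lambda_{f}^{2}](s,q_{0};q_{1})$ has a \emph{simple} pole at $s=1$, not a double pole. The lemma quoted from Rankin in Section 3 states that $\zeta(s)\psi(s)$ is holomorphic and non-zero for $\Re(s)\geq 1$; that is, $\psi(s)$ vanishes at $s=1$ and cancels one factor of $\zeta(s)$ (indeed $\zeta(s)\psi(s)$ is the symmetric-square $L$-function), so $\frac{\zeta^{2}(s)\psi(s)}{\zeta(2s)}R_{q_{0},q_{1}}(s)$ has a pole of order one. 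The same conclusion is forced by \eqref{Rankin-Selberg}: a double pole would produce an $X\log X$ main term there, which is absent. The paper exploits precisely this fact: $W(q,y)$ is exactly linear in $y$, hence both difference quotients equal the same constant $\sum_{q=q_{0}q_{1}}\frac{\mu(q_{1})}{q_{0}\phi(q_{1})}\mathrm{Res}_{s=1}\big(D[\lambda_{f}^{2}](s,q_{0};q_{1})/s\big)$, independent of $x$ and of $\delta$, so the main-term difference is \emph{identically zero} and the proposition follows from the error terms of Lemma \ref{Lemma 5.5} alone. Your argument happens to survive the misstatement because the shape $W(q,y)=C_{1}y\log y+C_{2}y$ that you posit contains the truth as the special case $C_{1}=0$, and your treatment of the hypothetical $y\log y$ term — cancellation of the $\delta$-independent part $C_{1}(\log x+1)+C_{2}$, the bound $M_{d}-M_{c}=O(C_{1}X^{c-1})$, the control $C_{1},\,M_{d}+M_{c}\ll_{f,B,\epsilon}X^{\frac{\epsilon}{1000}}$ via \eqref{3dq}, and the factorization $M_{d}^{2}-M_{c}^{2}=(M_{d}-M_{c})(M_{d}+M_{c})$ — is correct and yields $X^{\frac{5}{6}+O(\epsilon)}\ll X^{1-\frac{\epsilon}{2}}$. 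So your proof does establish the statement, in effect under the weaker (and correct) hypothesis that the pole has order at most two; but as written you should either justify the form of $W$ by ``pole of order at most two,'' which the Euler-product expression in Remark \ref{Remark 3.3} gives without deciding whether $\psi(1)=0$, or note that the pole is in fact simple, in which case the entire comparison collapses to the one-line identity used in the paper.
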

\begin{proof}
 By Lemma \ref{Lemma 5.5}, it is sufficient to show that 
\begin{equation} \int_{X}^{2X} \big|\frac{W(q,x+X^{d})-W(q,x)}{X^{d}}\big|^{2}-\big|\frac{W(q,x+X^{c})-W(q,x)}{X^{c}}\big|^{2}dx =0. \nonumber\end{equation}
By the definition of $W(q,x),$ we have
 $$\frac{W(q,x+X^{d})-W(q,x)}{X^{d}}=\sum_{q=q_{0}q_{1}} \frac{\mu(q_{1})}{q_{0}\phi(q_{1})}\textrm{Res}_{s=1}\big(\frac{D[\lambda_{f}^{2}](s,q_{0};q_{1})}{s}\big).$$
$$\frac{W(q,x+X^{c})-W(q,x)}{X^{c}}=\sum_{q=q_{0}q_{1}} \frac{\mu(q_{1})}{q_{0}\phi(q_{1})}\textrm{Res}_{s=1}\big(\frac{D[\lambda_{f}^{2}](s,q_{0};q_{1})}{s}\big).$$
\end{proof}
\subsection{Proof of Proposition \ref{Proposition 5.1}}
In this subsection we denote $m(I)$ as the Lebesgue measure of $I.$

\begin{proof}
 By the Taylor expansion of $\sin t$ about $0$, we have   \begin{equation}\label{5sin}  \sin t = \sum_{n=0}^{\infty} \frac{(-1)^{n}}{(2n+1)!}t^{2n+1} ,\end{equation} and by the Taylor expansion of $\sin^{2} t$ about $0$, we have   \begin{equation}\label{5sin2} \sin^{2} t=-\sum_{n=1}^{\infty} \frac{(-1)^{n} 2^{-1+2n}}{(2n)!}t^{2n}.  \end{equation} 
 From Remark \ref{Remark 5.2}, we see that 
 $$\int_{I_{2}} \big|S_{f}(\frac{a}{q}+\alpha)\big|^{2} d\alpha \ll \int_{ I_{2}}\big|S_{f}(\frac{a}{q}+\alpha)\big|^{2}\big(K_{-d}(\alpha) -K_{-c}(\alpha) \big)d\alpha. $$ Therefore, we get
\begin{equation}\begin{split} \int_{ I_{2}}\big|S_{f}(\frac{a}{q}+\alpha)\big|^{2}\big(K_{-d}(\alpha) -K_{-c}(\alpha) \big)d\alpha & \leq \int_{\mathbb{R}}\big|S_{f}(\frac{a}{q}+\alpha)\big|^{2}\big|K_{-d}(\alpha) -K_{-c}(\alpha) \big|d \alpha\\&= \int_{\mathbb{R}}\big|S_{f}(\frac{a}{q}+\alpha)\big|^{2}\big(K_{-d}(\alpha) -K_{-c}(\alpha)\big)d\alpha \\& \thinspace- 2 \int_{\mathbb{R}\cap L}\big|S_{f}(\frac{a}{q}+\alpha)\big|^{2}\big(K_{-d}(\alpha) -K_{-c}(\alpha)\big)d\alpha   
\end{split}\nonumber \end{equation}
where $$L:= \{ \alpha \in \mathbb{R} : K_{-d}(\alpha)- K_{-c}(\alpha) <0\}.$$ 
By \eqref{Gallagher}, we have
\begin{equation}\label{5difference} \begin{split} \int_{\mathbb{R}}\big|S_{f}(\frac{a}{q}+\alpha)\big|^{2}\big(K_{-d}(\alpha) -K_{-c}(\alpha) \big)d\alpha &=\int_{X}^{2X} |E_{d}(x)|^{2}-|E_{c}(x)|^{2}dx\\& +O_{f,B,\epsilon}(X^{\frac{5}{6}+2\epsilon}).  \end{split}\end{equation}
%Let's consider the left-hand side of \eqref{5difference}.
%By \eqref{5difference}, we have
%\begin{equation}\begin{split}
%&\int_{I_{2} \cup L} \big|S_{f}(\frac{a}{q}+\alpha)\big|^{2}\big(K_{-d}(\alpha) -K_{-c}(\alpha) \big)d\alpha   \thinspace \thinspace \\& = \int_{I_{2} \cup (L\cap [0,\infty))}\big|S_{f}(\frac{a}{q}+\alpha)\big|^{2}\big(K_{-d}(\alpha) -K_{-c}(\alpha) \big)d\alpha \\ & \;\; + \int_{I_{2} \cup \text{-}(L\cap [0,\infty))}\big|S_{f}(\frac{a}{q}+\alpha)\big|^{2}\big(K_{-d}(\alpha) -K_{-c}(\alpha) \big)d\alpha \\&\leq \int_{\mathbb{R}} \big|S_{f}(\frac{a}{q}+\alpha)\big|^{2}\big(K_{-d}(\alpha) -K_{-c}(\alpha) \big)d\alpha \\&= \int_{X}^{2X} |E_{d}(x)|^{2}-|E_{c}(x)|^{2}dx +O_{f,\epsilon}(X^{\frac{5}{6}+2\epsilon}). \nonumber \end{split}\end{equation}
%Note that 
%\begin{equation}\begin{split}
%&\int_{L\cap [0,\infty)}
%\big|S_{f}(\frac{a}{q}+\alpha)\big|^{2}\big(K_{-d}(\alpha) -K_{-c}(\alpha) \big)d\alpha < 0,
%\\& 
%\int_{\text{-}(L\cap [0,\infty))} \big|S_{f}(\frac{a}{q}+\alpha)\big|^{2}\big(K_{-d}(\alpha) -K_{-c}(\alpha) \big)d\alpha < 0.
%\end{split} \nonumber\end{equation}
Therefore, to prove Proposition \ref{Proposition 5.1}, it is sufficient to show that 
\begin{equation}\begin{split} &\int_{L\cap [0,\infty)}\big|S_{f}(\frac{a}{q}+\alpha)\big|^{2}\big|K_{-d}(\alpha) -K_{-c}(\alpha) \big| d\alpha \ll_{f,B,\epsilon} X^{1-\frac{\epsilon}{2}},
\\& \int_{\text{-}(L\cap [0,\infty))}\big|S_{f}(\frac{a}{q}+\alpha)\big|^{2}\big|K_{-d}(\alpha) -K_{-c}(\alpha) \big| d\alpha \ll_{f,B,\epsilon} X^{1-\frac{\epsilon}{2}},\\& \int_{X}^{2X} |E_{d}(x)|^{2}-|E_{c}(x)|^{2}dx\ll_{f,B,\epsilon} X^{1-\frac{\epsilon}{2}}.\end{split}\nonumber \end{equation}  
By Proposition \ref{Proposition 5.6}, 
 \begin{equation}\label{5finish4} \int_{X}^{2X} |E_{d}(x)|^{2}-|E_{c}(x)|^{2}dx =O_{f,B,\epsilon}(X^{1-\frac{\epsilon}{2}}).
 \end{equation}
The integrals over both the positive and negative real lines can be treated with the same argument, so let us consider 
$$ \int_{L \cap [0,\infty) }\big|S_{f}(\frac{a}{q}+\alpha)\big|^{2}\big|K_{-d}(\alpha) -K_{-c}(\alpha) \big| d\alpha.$$ Note that $L\cap [0,\infty) \subset I_{1} \cup J_{1} \cup J_{3} \cup I_{4}.$
For $I_{1}=[0,\frac{1}{\pi}X^{-c}],$ by the similar argument in Remark \ref{Remark 5.2}, we have $I_{1} \cap L =\emptyset.$

\noindent Let us consider $\alpha \in J_{1}=[\frac{1}{\pi}X^{-c}, X^{-c}]\cap \{ \alpha : K_{-d}(\alpha) < K_{-c}(\alpha)\}.$ 
By the Taylor expansion of $\sin^{2} \pi X^{d} \alpha$ about $0$ \eqref{5sin2},
\begin{align} K_{-d}(\alpha) &=\Big|\frac{\sin \pi X^d \alpha}{\pi X^d \alpha}\Big|^2 \nonumber\\ &=\Big|\sum_{n=1}^{\infty} \frac{(-1)^{n+1} 2^{-1+2 n}}{(2 n) !}\left(\pi X^d \alpha\right)^{2 n-2}\Big| \nonumber\\ &=1-\frac{(\pi X^{d} \alpha)^{2}}{3}+O\left(( X^{d} \alpha)^{4}\right). \nonumber \end{align}
Since $\pi X^{d} \alpha$ goes to $0$ as $X\rightarrow \infty,$
$$K_{-d}(\alpha)  \geq 1-\frac{\pi^{2}\big(1-o(1)\big) X^{2(d-c)}}{3} \;\textrm{as} \; X \rightarrow \infty,$$ and 
$$K_{-c}(\alpha) \leq \sin^{2}\pi X^{c}\alpha$$ for $\alpha \in J_{1}.$ 
Thus, we see that 
$$m(J_{1}) \ll m\big( \{ \alpha \in [\frac{1}{\pi}X^{-c},X^{-c}]\::\: 1-\pi^{2}X^{2(d-c)} \leq \sin^{2} \pi X^{c}\alpha\}\big).$$
Since $m\big(\{t \in [0,2\pi]\::\: |\cos t| \leq \pi X^{(d-c)}\}\big)\ll X^{d-c},$ 
 $m(J_{1})\ll X^{d-2c}.$
Therefore, 
\begin{equation}\label{5finish1}\begin{split} \int_{ J_{1}}\big|S_{f}(\frac{a}{q}+\alpha)\big|^{2}\big|K_{-d}(\alpha) -K_{-c}(\alpha)\big| d\alpha&\ll_{f} X^{2}m(J_{1})\\&\ll_{f} X^{2+d-2c}\\&\ll_{f} X^{\frac{5}{6}-3\epsilon}.\end{split} \end{equation}
For $\alpha \in J_{3}= [\frac{1}{2}X^{-d},X^{\epsilon}] \cap \{ \alpha : K_{-d}(\alpha) < K_{-c}(\alpha)\},$ we have
\begin{equation}\begin{split} K_{-d}(\alpha) \leq  K_{-c}(\alpha) &\Rightarrow \Big|\frac{\sin \pi X^{d} \alpha}{\pi X^{d} \alpha}\Big|^{2} \leq \Big|\frac{1}{\pi X^{c} \alpha}\Big|^{2} \\& \Rightarrow |\sin \pi X^{d} \alpha|\leq X^{d-c}. \end{split}\nonumber  \end{equation}
Note that $\pi X^{d}\alpha \in [\frac{\pi}{2},\pi X^{d+\epsilon}]$ for $\alpha \in J_{3}.$ Let us  separate the interval $[\frac{\pi}{2}, \pi X^{d+\epsilon}]$ such that  $$[\frac{\pi}{2}, \pi X^{d+\epsilon}]= \bigcup_{j} V_{j} $$ where $V_{j}=[2j\pi ,(2j+2)\pi)$ for $j \in \mathbb{N}.$ There are $O(X^{d+\epsilon})$ sets $V_{j}$ where  $V_{j} \cap [\frac{\pi}{2}, \pi X^{d+\epsilon}] \neq \emptyset.$ 
By the Taylor expansion of $\sin t$ about $0$ \eqref{5sin}, for each $t \in V_{j},$ we have
$$|\sin t| \leq X^{d-c} \Rightarrow |t-k\pi |\leq c_{2}X^{d-c}$$ for some $k \in \{2j, 2j+1, 2j+2 \},$ constant $c_{2}>0.$
Thus, for each $j,$ we get 
$$ m\big(\{ \alpha :\: \pi X^{d}\alpha \in V_{j}, \; |\sin \pi X^{d} \alpha|< X^{d-c}\}\big) \ll X^{-c}.$$  
Therefore, $m(J_{3})\ll X^{-c+d+\epsilon}.$
Since  $|\sin x| \leq 1,$ it is easy to check that $$K_{-c}(\alpha) \ll X^{2(d-c)}$$ for $\alpha \in J_{3}.$
Since $3(d-c)=-1-3\epsilon,$ we have
\begin{equation} \begin{split} \int_{ J_{3}}\big|S_{f}(\frac{a}{q}+\alpha)\big|^{2}\big|K_{-d}(\alpha) -K_{-c}(\alpha) \big| d\alpha &\ll_{f} X^{2} m(J_{3})X^{2(d-c)}\\&\ll_{f,\epsilon} X^{1-\epsilon}.\end{split}\end{equation}
For $\alpha \in I_{4}=[X^{\epsilon}, \infty),$ we have that $\max\big(K_{-d}(\alpha), K_{-c}(\alpha)\big) \ll X^{-2d} \alpha^{-2}.$ This yields
\begin{equation}\label{5finish3}\begin{split} \int_{ I_{4}}\big|S_{f}(\frac{a}{q}+\alpha)\big|^{2}\big|K_{-d}(\alpha) -K_{-c}(\alpha)  \big| d\alpha&\ll_{f}  \int_{X^{\epsilon}}^{\infty} X^{2}X^{-2d} \frac{1}{\alpha^{2}}d\alpha\\ &\ll_{f,\epsilon} X^{1-\epsilon}.\end{split} \end{equation} 
Therefore, 
\begin{equation}\label{5finish2}
\int_{ L \cap [0,\infty) }\big|S_{f}(\frac{a}{q}+\alpha)\big|^{2}\big|K_{-d}(\alpha) -K_{-c}(\alpha) \big| d\alpha \ll_{f,\epsilon} X^{1-\epsilon}. \end{equation}
\end{proof}

\section*{\rm{Acknowledgement}}
\noindent The author would like to thank his advisor Xiaoqing Li, for suggesting this problem and giving helpful advice. The author is grateful to the referees for a careful reading of the paper and lots of invaluable suggestions. 
\nonumber

\bibliographystyle{plain}   % this means that the order of references
			    % is dtermined by the order in which the
			    % \cite and \nocite commands appear
\bibliography{over}  % list here all the bibliogres that

\begin{thebibliography}{10}

\bibitem{BBMZ}
S.~Baier, T.~D. Browning, G.~Marasingha, and L.~Zhao.
\newblock Averages of shifted convolutions of {$d_3(n)$}.
\newblock {\em Proc. Edinb. Math. Soc. (2)}, 55(3):551--576, 2012.

\bibitem{BHP}
R.~C. Baker, G.~Harman, and J.~Pintz.
\newblock The exceptional set for {G}oldbach's problem in short intervals.
\newblock In {\em Sieve methods, exponential sums, and their applications in
  number theory ({C}ardiff, 1995)}, volume 237 of {\em London Math. Soc.
  Lecture Note Ser.}, pages 1--54. Cambridge Univ. Press, Cambridge, 1997.

\bibitem{DeshIwan}
J.-M. Deshouillers and H.~Iwaniec.
\newblock An additive divisor problem.
\newblock {\em J. London Math. Soc. (2)}, 26(1):1--14, 1982.

\bibitem{FM}
O.~M. Fomenko.
\newblock Fourier coefficients of parabolic forms, and automorphic
  {$L$}-functions.
\newblock {\em Zap. Nauchn. Sem. S.-Peterburg. Otdel. Mat. Inst. Steklov.
  (POMI)}, 237(Anal. Teor. Chisel i Teor. Funkts. 14):194--226, 231, 1997.

\bibitem{Go}
Dorian Goldfeld.
\newblock {\em Automorphic forms and {L}-functions for the group {${\rm
  GL}(n,\rm R)$}}, volume~99 of {\em Cambridge Studies in Advanced
  Mathematics}.
\newblock Cambridge University Press, Cambridge, 2015.
\newblock With an appendix by Kevin A. Broughan, Paperback edition of the 2006
  original [ MR2254662].

\bibitem{huang2021rankinselberg}
Bingrong Huang.
\newblock On the {R}ankin-{S}elberg problem.
\newblock {\em Math. Ann.}, 381(3-4):1217--1251, 2021.

\bibitem{Ivicbook}
Aleksandar Ivi\'{c}.
\newblock {\em The {R}iemann zeta-function}.
\newblock A Wiley-Interscience Publication. John Wiley \& Sons, Inc., New York,
  1985.
\newblock The theory of the Riemann zeta-function with applications.

\bibitem{Iwaniecbook2}
Henryk Iwaniec.
\newblock {\em Topics in classical automorphic forms}, volume~17 of {\em
  Graduate Studies in Mathematics}.
\newblock American Mathematical Society, Providence, RI, 1997.

\bibitem{IK1}
Henryk Iwaniec and Emmanuel Kowalski.
\newblock {\em Analytic number theory}, volume~53 of {\em American Mathematical
  Society Colloquium Publications}.
\newblock American Mathematical Society, Providence, RI, 2004.

\bibitem{L1}
Guangshi L\"{u}.
\newblock Average behavior of {F}ourier coefficients of cusp forms.
\newblock {\em Proc. Amer. Math. Soc.}, 137(6):1961--1969, 2009.

\bibitem{MR}
Kaisa Matom\"{a}ki and Maksym Radziwi\l\l.
\newblock Sign changes of {H}ecke eigenvalues.
\newblock {\em Geom. Funct. Anal.}, 25(6):1937--1955, 2015.

\bibitem{MRT1}
Kaisa Matom\"{a}ki, Maksym Radziwi\l\l, and Terence Tao.
\newblock Correlations of the von {M}angoldt and higher divisor functions {I}.
  {L}ong shift ranges.
\newblock {\em Proc. Lond. Math. Soc. (3)}, 118(2):284--350, 2019.

\bibitem{MRT2}
Kaisa Matom\"{a}ki, Maksym Radziwi\l\l, and Terence Tao.
\newblock Correlations of the von {M}angoldt and higher divisor functions {II}:
  {D}ivisor correlations in short ranges.
\newblock {\em Math. Ann.}, 374(1-2):793--840, 2019.

\bibitem{Miller1}
Stephen~D. Miller.
\newblock Cancellation in additively twisted sums on {${\rm GL}(n)$}.
\newblock {\em Amer. J. Math.}, 128(3):699--729, 2006.

\bibitem{Montgomery1}
Hugh~L. Montgomery.
\newblock {\em Topics in multiplicative number theory}.
\newblock Lecture Notes in Mathematics, Vol. 227. Springer-Verlag, Berlin-New
  York, 1971.

\bibitem{Montgomery10}
Hugh~L. Montgomery.
\newblock {\em Ten lectures on the interface between analytic number theory and
  harmonic analysis}, volume~84 of {\em CBMS Regional Conference Series in
  Mathematics}.
\newblock Published for the Conference Board of the Mathematical Sciences,
  Washington, DC; by the American Mathematical Society, Providence, RI, 1994.

\bibitem{MontgomeryVaughan}
Hugh~L. Montgomery and Robert~C. Vaughan.
\newblock {\em Multiplicative number theory. {I}. {C}lassical theory},
  volume~97 of {\em Cambridge Studies in Advanced Mathematics}.
\newblock Cambridge University Press, Cambridge, 2007.

\bibitem{PYOUNG}
Ian Petrow and Matthew~P. Young.
\newblock The fourth moment of {D}irichlet {$L$}-functions along a coset and
  the {W}eyl bound. preprint (2022), https://arxiv.org/abs/1908.10346, 2019.

\bibitem{Rama1}
K.~Ramachandra.
\newblock A simple proof of the mean fourth power estimate for {$\zeta
  (1/2+it)$} and {$L(1/2+it,\chi)$}.
\newblock {\em Ann. Scuola Norm. Sup. Pisa Cl. Sci. (4)}, 1:81--97 (1975),
  1974.

\bibitem{RA1}
R.~A. Rankin.
\newblock Contributions to the theory of {R}amanujan's function $\tau(n)$ and
  similar arithmetical functions: {II}. {T}he order of the {F}ourier
  coefficients of integral modular forms.
\newblock {\em Mathematical Proceedings of the Cambridge Philosophical
  Society}, 35(3):357--372, 1939.

\bibitem{RA2}
R.~A. Rankin.
\newblock Sums of powers of cusp form coefficients.
\newblock {\em Math. Ann.}, 263(2):227--236, 1983.

\bibitem{S}
Atle Selberg.
\newblock Bemerkungen \"{u}ber eine {D}irichletsche {R}eihe, die mit der
  {T}heorie der {M}odulformen nahe verbunden ist.
\newblock {\em Arch. Math. Naturvid.}, 43:47--50, 1940.

\bibitem{Titchmarsh1}
E.~C. Titchmarsh.
\newblock {\em The theory of the {R}iemann zeta-function}.
\newblock The Clarendon Press, Oxford University Press, New York, second
  edition, 1986.
\newblock Edited and with a preface by D. R. Heath-Brown.

\end{thebibliography}
			     % you need. 

\end{document}